\documentclass{amsart}
\usepackage[margin=1.2in]{geometry}
\usepackage{amssymb}
\usepackage{amscd}
\usepackage[all]{xy}
\usepackage{bbm}
\usepackage{mathrsfs}
\usepackage{enumerate}
\usepackage{tikz}
\usepackage{stmaryrd}
\usepackage{etoolbox}
\usepackage{array}
\usepackage{verbatim}
\usepackage{hyperref}
\usepackage{stmaryrd}
\usepackage{bbding}

\newcommand{\R}{\mathbb{R}}

\newcommand{\eps}{\varepsilon}

\newcommand{\tensor}{\otimes}

\newcommand{\del}{\nabla}

\newcommand{\bd}{\partial}
\newcommand{\cl}{\overline}

\newcommand{\la}{\langle}
\newcommand{\ra}{\rangle}

\renewcommand{\div}{\operatorname{div}}

\newcommand{\grad}{\del}

\newcommand{\an}{\operatorname{An}}

\theoremstyle{plain}
\newtheorem{theorem}{Theorem}

\newtheorem{prop}[theorem]{Proposition}
\newtheorem{lemma}[theorem]{Lemma}

\newtheorem{question}[theorem]{Question}

\theoremstyle{definition}

\newtheorem{rem}[theorem]{Remark}

\author{Liam Mazurowski}
\address{Department of Mathematics, Southern Connecticut State University, New Haven, CT 06515}
\email{mazurowskil1@southernct.edu}

\author{Xuan Yao}
\address{Department of Mathematics, University of Chicago, Chicago, IL 60637}
\email{dorisxyao@uchicago.edu}

\title{Scalar Curvature Lower Bounds and Convergence in Measure}

\begin{document}

\begin{abstract}
    Assume that smooth metrics $g_k$ converge in measure to a smooth metric $g$. Gromov asked the following question: if all the metrics $g_k$ have non-negative scalar curvature does it follow that $g$ also has non-negative scalar curvature? We answer Gromov's question in dimension three. We show that, without further assumptions, the metric $g$ need not have non-negative scalar curvature and in fact $g$ may be completely arbitrary. However, if one assumes in addition that the identity maps $(M,g_k)\to (M,g)$ are uniformly bi-Lipschitz, then indeed $g$ must have non-negative scalar curvature.  Our method of proof can also be used to show that, under an almost Euclidean entropy condition, scalar curvature lower bounds will persist under convergence of the metrics together with their inverses in $L^p$ for suitably large $p$.  
\end{abstract}

\maketitle

\section{Introduction}

Gromov proved that scalar curvature lower bounds are preserved under the $C^0$-convergence of smooth metrics to a smooth limit \cite{gromov2014dirac,gromov2019four}.  Later, Bamler \cite{bamler2016ricci} gave an alternative proof using Ricci flow \cite{bamler2016ricci}.  In dimension three, there are also proofs using harmonic functions \cite{mazurowski2026quantification} and $\mu$-bubbles \cite{mazurowski2026scalar}. It is a very interesting problem to understand whether and to what extent the $C^0$-convergence assumption can be weakened in this result.  In this direction, recent work of the authors \cite{mazurowski2026scalar} showed that $C^0$-convergence can be relaxed to volume above distance below type convergence. On the other hand, uniform convergence of distance functions \cite{lee2024metric} and intrinsic flat convergence \cite{krandel2024smooth} do not suffice. 

Many other relaxations of $C^0$-convergence are possible.  For example, in Four Lectures on Scalar Curvature \cite[Section 3.1.3]{gromov2019four}, Gromov posed the following question about scalar curvature and convergence in measure. 

\begin{question}[Gromov]
\label{Gromov-Question}
    Assume that smooth metrics $g_k$ converge in measure to a smooth limit $g$. If each metric $g_k$ has non-negative scalar curvature, does it follow that $g$ has non-negative scalar curvature as well? 
\end{question}

\begin{rem}
    Gromov remarks that this is most likely to be true if one assumes in addition that the identity maps $(M,g_k)\to (M,g)$ are uniformly bi-Lipschitz. 
\end{rem}

In this paper, we answer Gromov's question about convergence in measure in dimension three. Our first result shows that the answer to Question \ref{Gromov-Question} is no in general. In fact, convergence in measure alone puts no restrictions at all on the limiting metric. 

\begin{theorem} Fix any smooth metric $g$ on the unit ball $B^3$. Then there are smooth metrics $g_k$ on $B^3$ such that $R(g_k) \ge 1$ for all $k$ and $g_k$ converges to $g$ in measure.
\end{theorem}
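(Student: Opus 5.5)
Our plan is to write $g_k = g + h_k$, where $h_k$ is a large perturbation supported on a set of small $g$-measure. "Convergence in measure" means that for every $\eps>0$ the $g$-volume of $\{x : |g_k - g|_g > \eps\}$ tends to zero. We will therefore build $g_k = u_k^4 g$ conformally, with $u_k \equiv 1$ off a small set. The catch is that we cannot make $u_k \equiv 1$ on most of the ball, since there $R(g_k) = R(g)$, which may be negative. So we instead take $g_k = \phi_k^4 g$ with $\phi_k$ close to $1$ in measure but wildly oscillating. The conformal formula in dimension three is
\begin{equation*}
R(\phi^4 g) = \phi^{-5}\bigl(-8\Delta_g \phi + R(g)\phi\bigr),
\end{equation*}
so we need $-8\Delta_g\phi_k + R(g)\phi_k \ge \phi_k^5$ everywhere, while $\phi_k \to 1$ in measure.

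The key construction is a periodic array of thin "spikes". First cover $B^3$ by a cubic lattice of mesh $\delta_k \to 0$. In each cell we place a bump $\phi_k = 1 + \psi_k$, where $\psi_k$ is essentially a large multiple of a fundamental-solution-type function $c/r$ truncated near the cell center. Such a function is superharmonic in the distributional sense, with a negative Laplacian concentrated at the center. That alone gives $-\Delta\phi \ge 0$, but we need strict positivity of size $\sim \sup|R(g)| + 1$ everywhere. A cleaner choice is therefore
\begin{equation*}
\phi_k = 1 + A\,w_k,
\end{equation*}
where $w_k$ solves $-\Delta_g w_k = 1 - \mu_k$ on $B^3$. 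Here $\mu_k$ is a sum of point-like negative sources smeared at scale $\rho_k \ll \delta_k$ at the lattice points, normalized so that each cell has zero net charge.

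On the complement of the small balls, $-\Delta w_k = 1$. Moreover $w_k$ is $O(\delta_k^2)$ away from the sources: on each cell it is $-\tfrac{|x|^2}{6}$ plus a Newtonian potential of total mass $\sim\delta_k^3$, and that potential is $\sim \delta_k^3/r$. So at distance $r \gg \delta_k^{3}$ it is small. Hence $\phi_k \to 1$ uniformly outside tiny balls whose total volume tends to zero.

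The main obstacle is the source region, where $-\Delta w_k$ is very negative. Our plan is to replace the "negative-charge" region, near each lattice point, by a region where $\phi$ is huge and the metric is nearly round-sphere-like, and where scalar curvature is automatically positive. Concretely, inside each small ball we use a conformal factor modelled on a spherical cap, $\phi = \lambda(1+\lambda^{2}|x|^2)^{-1/2}$ up to scaling, whose metric is a round sphere of large positive curvature. We then glue it to the exterior using the $1/r$ tail: $c/r$ is harmonic, so the transition annulus contributes only the $-\Delta(|x|^2)$ term, which is positive. We would verify that the positive charge flux from $-\tfrac{|x|^2}{6}$ in each cell is balanced by the spherical bubble's $1/r$ tail coefficient, and that in the cap $-8\Delta\phi \ge C\phi^5$ dominates $|R(g)|\phi$ because $\phi$ is large there. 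Finally we let $A$ be fixed by $\sup|R(g)|$, ensure $R(g_k)\ge 1$, and smooth the gluings. Near $\partial B^3$ we extend the lattice slightly beyond the ball, so no boundary condition is needed.
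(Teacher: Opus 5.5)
There is a genuine gap, and it defeats the conformal ansatz itself, not just one detail. The step that fails is the charge balancing in each cell. To keep $w_k=O(\delta_k^2)$ each cell must have zero net charge. So the positive density from $-\Delta(-|x|^2/6)=1$ has to be cancelled by a region where $-\Delta_g\phi_k<0$. But wherever $-\Delta_g\phi_k<0$, the inequality $-8\Delta_g\phi_k+R(g)\phi_k\ge\phi_k^5$ forces $R(g)>\phi_k^4>0$, which is impossible if, say, $R(g)\le 0$.

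The spherical bubble does not help. It satisfies $-8\Delta\phi=c\,\phi^5>0$ and its tail is $+c'/r$. It therefore carries positive charge, the same sign as the quadratic term, so the two add rather than cancel. Once the negative sources are removed, each of the $\sim\delta_k^{-3}$ cells carries net positive charge $\gtrsim\delta_k^3$. Then $\phi_k-1$ behaves like the potential of an order-one charge and is not small.

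In fact no choice of conformal factors can work. Suppose $g_k=\phi_k^4g$ with $R(g_k)\ge 1$ and $\phi_k\to1$ in measure. For $0\le\eta\in C^\infty_c(B^3)$, integrating by parts gives
\[
\int\phi_k^5\,\eta\,dv_g\;\le\;\int\phi_k\bigl(-8\Delta_g\eta+R(g)\eta\bigr)\,dv_g .
\]
Take $\eta=\zeta^m$ with $m\ge3$ and $0\le\zeta\le1$. Then $|{-8}\Delta_g\eta+R(g)\eta|\le C\zeta^{m-2}$, so H\"older with exponents $5$ and $5/4$ bounds the right side by $C(\int\phi_k^5\eta\,dv_g)^{1/5}$. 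Hence $\int\phi_k^5\eta\,dv_g\le C$ uniformly in $k$. So $\phi_k$ is bounded in $L^5_{loc}$, and convergence in measure upgrades to $\phi_k\to1$ in $L^1_{loc}$. The right side then tends to $\int R(g)\eta\,dv_g$, because $\int\Delta_g\eta\,dv_g=0$. Fatou gives $\liminf_k\int\phi_k^5\eta\,dv_g\ge\int\eta\,dv_g$. Together these give $R(g)\ge1$ everywhere. So within a conformal class, the lower bound passes to the limit.

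The missing idea is that $g_k$ must leave the conformal class of $g$. The paper covers all but measure $\eps$ of $B^3$ by disjoint small balls $B_i$. On each $B_i$ it uses the unit round metric $\bigl(\frac{4}{4+(x-x_i)^Tg(x_i)(x-x_i)}\bigr)^2g(x_i)$, which is $C^0$-close to $g$ there. Using Gromov--Lawson connected sums with $R\ge 6-\eps$, it then hides the rest of each unit sphere, the neck, and the gluing region inside thin coordinate annuli $B_i'\setminus B_i$ of negligible total measure. This large hidden geometry is invisible to convergence in measure, and it is exactly what a conformal deformation of $g$ cannot produce.
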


Our second result shows that the answer to Question \ref{Gromov-Question} is yes under the extra assumption suggested by Gromov. 

\begin{theorem}
\label{main-theorem}
    Assume $M^3$ is a closed three manifold. Assume that smooth metrics $g_k$ on $M$ converge in measure to a smooth metric $g$. Further assume that there is a constant $K > 1$ such that the identity maps $(M,g_k)\to (M,g)$ are all $K$-bi-Lipschitz. Then if each metric $g_k$ has non-negative scalar curvature so does $g$. 
\end{theorem}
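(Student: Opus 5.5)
We argue by contradiction: suppose $R(g)(p) < 0$ at some point $p\in M$. The plan is to localize near $p$ and find a geometric quantity that (a) detects negative scalar curvature of $g$ at small scales, (b) is monotone or rigid under $R\ge 0$, and (c) is stable under convergence in measure with a uniform bi-Lipschitz bound. In dimension three, the natural candidate is the harmonic function / spacetime-harmonic type monotone quantity, or equivalently the $\mu$-bubble approach used in \cite{mazurowski2026quantification,mazurowski2026scalar}.

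Concretely, we fix a small $g$-geodesic ball $B=B_g(p,r)$ on which $R(g)\le -\delta<0$. By the Stern/Bray--Kazaras--Khuri--Stern integral formula, for a $g$-harmonic function $u$ on $B$ with prescribed boundary data $u=x_1$ (a coordinate function in $g$-normal coordinates), we have
\[
\int_B \frac{|\nabla^2 u|^2}{|\nabla u|}+R\,|\nabla u| \,dV \;=\; \text{boundary terms},
\]
and the boundary terms agree with their Euclidean values up to $O(r^{4})$-type errors. Negativity of $R(g)$ then forces a definite deficit in an energy-type quantity. The idea is to compare this with the same quantity for $g_k$: for each $k$ we solve the $g_k$-harmonic problem $u_k$ with the same boundary data. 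Since $g_k$ is uniformly $K$-bi-Lipschitz to $g$, the equations $\operatorname{div}_{g_k}(\nabla_{g_k}u_k)=0$ are uniformly elliptic divergence-form equations in fixed coordinates, with coefficients $a_k^{ij}=\sqrt{\det g_k}\,g_k^{ij}$ that are uniformly bounded and elliptic. Convergence in measure together with boundedness gives $a_k\to a$ in every $L^q$. By Meyers-type higher integrability and standard $H$-convergence arguments, $u_k\to u$ in $W^{1,2}$ and the Dirichlet energies converge. Applying the $R\ge0$ inequality for $g_k$ yields an inequality on an energy/boundary-flux quantity, which passes to the limit and contradicts the strict deficit for $g$.

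The main obstacle is that the Stern-type inequality for $g_k$ involves boundary terms (mean curvature, $|\nabla u_k|$ on $\partial B$) and level-set topology, neither of which is controlled by measure convergence. To circumvent this, we would work with a version that needs only interior, energy-level information. One option is to use monotone quantities along level sets averaged in the parameter, and to choose the boundary generically, by a Fubini argument over radii, so that boundary traces of $a_k$ converge. An alternative is to replace the harmonic functions by $\mu$-bubbles/area-minimizing surfaces in a slab. Bi-Lipschitz control gives uniform area and perimeter bounds, and measure convergence of the metrics gives lower semicontinuity of the relevant weighted area functionals, since area is the integral of a continuous function of $g_k$ restricted to the surface. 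We would try the $\mu$-bubble route first: for $R(g_k)\ge 0$, one gets an upper bound on the width of a band in terms of the $\mu$-weight. Because width is a distance quantity and distances are only bi-Lipschitz controlled, however, we would phrase the obstruction via a torical band/cube inequality in which only the volume and areas enter. That step, ensuring that the width-type conclusion survives measure convergence, is where we expect the real difficulty to lie.
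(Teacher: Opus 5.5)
Your proposal does not close, and the obstruction is more specific than the boundary-term issue you flag. Everything you actually control lives at the $W^{1,2}$ level: uniform ellipticity of $a_k^{ij}=\sqrt{\det g_k}\,g_k^{ij}$, $a_k\to a$ in $L^q$, strong $W^{1,2}$ convergence $u_k\to u$, convergence of Dirichlet energies, and a Meyers bound $\nabla u_k\in L^{2+\epsilon(K)}$. No inequality implied by $R(g_k)\ge 0$ is visible at that level.

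The Stern-type identity involves $|\nabla^2u|^2/|\nabla u|$, fluxes of $\nabla|\nabla u|$ through $\partial B$, the $g_k$-mean curvature of $\partial B$, and level-set topology. Choosing radii generically does not help, because these traces involve second derivatives of $u_k$ and first derivatives of $g_k$. Take even the best case of your ``average over the level-set parameter'' idea. It turns the monotone quantity into a bulk integral which, in dimension three (coarea applied to $\int_{\{u=t\}}|\nabla u|^2$), has the form $\int\varphi(u)|\nabla u|^3\,dv$. Passing this to the limit needs $\nabla u_k\to\nabla u$ in $L^3$ and $u_k\to u$ uniformly. For harmonic functions, Meyers/Gehring gives only $L^{2+\epsilon(K)}$ with $\epsilon(K)\to0$ as $K\to\infty$. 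So this recovers at best the case $K\le 1+\epsilon_0$, not arbitrary $K$.

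The idea you are missing is to switch to $p$-harmonic functions with $2<p<3$. The paper shows the Gehring improvement $\epsilon(K)$ can be taken independent of $p\in[2,3]$. Choosing $p=p(K)$ close to $3$ then gives $p+\epsilon(K)>3$, and hence $u_k\to u$ in $W^{1,s}$ for some $s>3$, after first getting strong $W^{1,p}$ convergence by the monotone-operator argument you sketch. This is paired with two further ingredients:
\begin{itemize}
\item the interior annular quantity $D_p(r)=r^{1-a}c_\psi+\int\varphi(r,u)|\nabla u|^3\,dv$, obtained by averaging the Agostiniani--Mantegazza--Mazzieri--Oronzio function $F$ in $t$ and $s$, for which $r^aD_p(r)$ is nondecreasing when $R\ge0$ (via the $\epsilon$-regularized almost-monotonicity, to cope with critical points);
\item a model computation on rescaled annuli showing that $D_p(1)-2^aD_p(2)$ has sign opposite to $R(q)$.
\end{itemize}

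Your $\mu$-bubble fallback does not substitute for this. Convergence in measure says nothing about $g_k$ restricted to the ($k$-dependent) bubbles. Width is controlled only up to the factor $K$, which is far too coarse to see an $O(r^2R)$ deficit.
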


\begin{rem}
    Previous work of the authors \cite{mazurowski2026quantification} showed that there is a universal constant $\eps_0 > 0$ so that the above result is true whenever $K \le 1+\eps_0$. In higher dimensions, Lee \cite{lee2026quantification} proved the same result when $K \le 1 + \eps_0(n)$ for a constant $\eps_0(n) > 0$ depending only on the dimension. A key point of Theorem \ref{main-theorem} is that $K$ may be arbitrarily large.   
\end{rem}

The proof of Theorem \ref{main-theorem} uses $p$-harmonic functions with $p > 2$ to analyze the scalar curvature.  Essentially the same method can also be used to study scalar curvature lower bounds under non-collapsed convergence of the metric together with its inverse in $L^P$. Here, following \cite{lee2023dp}, we use Perelman's entropy and Perelman's $\nu$-functional \cite{perelman2002entropy} to measure non-collapsing.  In this setting, we can prove the following result. 

\begin{theorem}
\label{main-lp}
There are constants $P > 3$ and $\delta > 0$ so that the following holds.   Let $g_k$ be a sequence of smooth metrics on a closed manifold $M^3$ satisfying the uniform non-collapsing condition $\nu(g_k,2)\ge -\delta$.  Assume there is another smooth metric $g$ such that $g_k \to g$ and $g_k^{-1}\to g^{-1}$ in $L^P$. Then if each metric $g_k$ has non-negative scalar curvature so does $g$. 
\end{theorem}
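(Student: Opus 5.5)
We argue by contradiction, following the $p$-harmonic approach used for Theorem \ref{main-theorem}. Suppose $R(g)(x_0) < 0$ at some $x_0 \in M$. Choose $g$-harmonic coordinates around $x_0$ and rescale, so that on a small ball the metric $g$ is $C^{2}$-close to Euclidean with $R(g) \le -\eta < 0$ there. Inside this ball, consider a $g$-geodesic annulus or cylinder-type region $\Omega$. On $\Omega$ we will use an integral monotonicity/rigidity inequality for $p$-harmonic functions with $p>2$ in dimension three (a Stern-type level set formula generalized to $p$-harmonic maps). The form we aim for is
\[
\int_\Omega |\nabla u|^{p-2}\Big(\frac{|\nabla^2 u|^2}{|\nabla u|}+ R\,|\nabla u|\Big)\,dV \le C\,\text{(boundary flux terms)} + 2\pi\int \chi(\Sigma_t)\,dt,
\]
valid for any smooth metric. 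For $g$, a comparison function that is nearly linear makes the boundary data essentially Euclidean, and then the negative scalar curvature produces a definite \emph{gain}: the left side is strictly below its Euclidean value by an amount $c\eta$.

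The key step is to apply the same inequality to $g_k$ with $R(g_k)\ge 0$. We solve the $g_k$-$p$-harmonic Dirichlet problem $u_k$ on $\Omega$ with the same boundary data. The $p$-energy functional $\int |du|_{g_k}^p\, dV_{g_k}$ involves only $g_k^{-1}$ (raised to power $p/2$) and $\sqrt{\det g_k}$. We will show that its minimizers and minimal values converge to those for $g$ when $g_k\to g$ and $g_k^{-1}\to g^{-1}$ in $L^P$ with $P$ large relative to $p$. The mechanism is that these hypotheses give $L^{q}$ control of the coefficient matrix $a_k=|g_k|^{1/2}g_k^{-1}$ and its inverse; this is exactly where $P>3$ enters, as integrability sufficient for degenerate-elliptic compactness. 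Combined with the entropy bound $\nu(g_k,2)\ge-\delta$, which yields a uniform almost-Euclidean log-Sobolev and hence Sobolev inequality, this gives uniform Meyers/Moser-type estimates and $\Gamma$-convergence of the energies. Given this, the energy convergence $E_k\to E$ shows that the boundary flux terms converge, and we deduce $E$ is at least the Euclidean-type bound that $R(g_k)\ge0$ forces on $E_k$. The $\chi(\Sigma_t)$ term is controlled by topology: in a ball-like region, level sets have $\chi\le 1$ per component, with the sign favorable.

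This gives a contradiction with the gain $c\eta$ computed for $g$. The main obstacle is the convergence step. The inequality for $g_k$ must be expressed purely in terms of quantities continuous under the weak convergence, so only the energy and the boundary data can appear, with the Hessian term discarded via its sign. Moreover, the smallness $\delta$ is needed so that the Sobolev constant is near Euclidean and the $p$-harmonic functions do not degenerate. I would first try to derive the sharp inequality ``$R\ge0 \Rightarrow$ capacity-type energy $\ge$ model value'' as a clean statement, and then verify lower semicontinuity of that energy under the $L^P$ hypotheses.
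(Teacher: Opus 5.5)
Your argument breaks at the step you yourself flag as the main obstacle, and it cannot be repaired in the form you propose. You want an inequality, valid whenever $R(g_k)\ge 0$, whose only terms are the $p$-energy and the boundary data. But the $p$-energy of the Dirichlet solution carries essentially no curvature information:
\begin{itemize}
\item By the coarea formula, $\int_{\{s_1<u<s_2\}}|\nabla u|^p\,dv=(s_2-s_1)\operatorname{Cap}_p$, so on any level-set region the energy only records the flux.
\item No bound ``$R\ge 0\Rightarrow E\ge$ model value'' can hold, since $g\mapsto\lambda^2 g$ preserves $R\ge 0$ while multiplying $E$ by $\lambda^{3-p}$.
\item In the rotationally symmetric family $d\rho^2+s_\kappa(\rho)^2dS^2$ on the fixed annulus, one finds $E=4\pi C(\kappa)^{p-1}(2^a-16^{-a})$. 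Here $C(\kappa)$ is decreasing in $\kappa$, because $s_\kappa(\rho)<\rho$ for $\kappa>0$. So positive curvature \emph{lowers} the energy, the opposite of the direction you need.
\end{itemize}
A Stern-type inequality does see $R$. However, its boundary terms (mean curvature of $\partial\Omega$, $\partial_\nu|\nabla u|$, geodesic curvature of $\Sigma_t\cap\partial\Omega$) involve derivatives of $g_k$ and second derivatives of $u_k$ at $\partial\Omega$. $L^P$ convergence of $g_k$ and $g_k^{-1}$ gives no control of these, and energy convergence does not make them converge. Finally, the ``gain $c\eta$'' you compute for $g$ concerns the left-hand side of the Stern inequality, not the energy. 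The two halves of your contradiction are therefore about different quantities.

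The missing idea is a quantity that is monotone under $R\ge 0$ and is a pure bulk integral of first-order terms. The paper averages the Agostiniani--Mantegazza--Mazzieri--Oronzio function $F(t)$ against a bump $\psi$, giving $D_p(r)=r^{1-a}c_\psi+\int\varphi(r,u)|\nabla u|^3\,dv$. It shows $r\mapsto r^aD_p(r)$ is non-decreasing when $R\ge0$, using the $\eps$-regularized equation to handle critical points. From $\mathcal Q_i(g_r)=b_ir^2R(q)+O(r^3)$ with $b_2>b_1>0$, it gets $D_p(1)>2^aD_p(2)$ at small scale when $R(q)<0$. This is a two-scale comparison, not a comparison with a model value.

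Because $D_p$ contains $|\nabla u|^3$, one needs $u_k\to u$ in $W^{1,q}$ for some $q>3$, not just energy convergence. That is the real job of the entropy condition. Via $d_Q$-balls, good cutoffs, a uniform Sobolev inequality, Caccioppoli and Gehring's lemma, the paper gets gradient integrability $p+\eps$ with $\eps=\eps(Q)$ independent of $p$, and then chooses $p<3$ with $p+\eps>3$. The $W^{1,s}$ convergence for $s<p$ that your $\Gamma$-convergence step aims at follows from the $L^P$ coefficient control alone. It is interpolation between that and the uniform higher-integrability bound that yields $D_{p,k}(i)\to D_p(i)$.
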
 

Note that if $g_k \to g$ in measure and $g_k$ is uniformly bi-Lipschitz to $g$ via the identity, then $g_k \to g$ in $L^p$ and $g_k^{-1}\to g^{-1}$ in $L^p$ for every finite $p$.  However, while the bi-Lipschitz assumption does give a uniform lower bound $\nu(g_k,2)\ge -C$, the constant $C$ need not be small.  Thus neither Theorem \ref{main-theorem} nor Theorem \ref{main-lp} implies the other.

\begin{rem} Theorem \ref{main-lp} is closely connected with the $d_P$-theory of Lee-Naber-Neumayer \cite{lee2023dp}. Indeed, 
for any sequence of metrics $g_k$ satisfying $R(g_k)\ge 0$ and $\nu(g_k,2)\ge -\delta$, the $d_P$-compactness theory \cite[Theorem 8.1 and Proposition 8.2]{lee2023dp} implies that (after passing to a subsequence and replacing $g_k$ by $\psi_k^*g_k$ for some diffeomorphisms $\psi_k$) the metrics $g_k$ will converge to a limit $g$ in $L^P$. In fact, by \cite[Remark 8.3]{lee2023dp} the inverse metrics $g_k^{-1}$ will also converge to $g^{-1}$ in $L^P$. Of course, this limiting metric $g$ will not be smooth in general but will only belong to $L^P$. 
\end{rem}

\subsection{Discussion} 

One of Gromov's many influential ideas asserts that scalar curvature type phenomena persist well-below the $C^2$ regularity threshold needed to define scalar curvature in the classical sense.  This is well-demonstrated by Gromov's $C^0$-convergence theorem \cite{gromov2014dirac,gromov2019four} which shows that a smooth metric which is a $C^0$-limit of smooth metrics with non-negative scalar curvature itself must have non-negative scalar curvature. The further development of this perspective remains a very active area of research.  For example, it is an important problem to determine the extent to which the $C^0$-convergence assumption can be weakened in this result.  

\begin{question}
\label{general-question} 
Assume that smooth manifolds $(M_k,g_k)$ with non-negative scalar curvature converge (weakly) to a smooth limit $(M,g)$. Which notions of weak convergence ensure that the limit space also has non-negative scalar curvature? 
\end{question}

Recent work of the authors \cite{mazurowski2026scalar} shows, for example, that $C^0$-convergence can be relaxed to volume above distance below type convergence; this answers questions of Allen \cite[Question 6.3]{allen2024oberwolfach}, Gromov \cite[Page 333]{gromov2019four}, and Sormani \cite[Question 4.2 and Remark 4.5]{sormani2023conjectures}.  On the other hand, it is known that uniform convergence of distance functions \cite{lee2024metric} and intrinisic flat convergence \cite{krandel2024smooth} are not enough. In this paper we contribute to the study of this question by showing that scalar curvature lower bounds are preserved when smooth metrics converge to a smooth limit in measure with uniform bi-Lipschitz control. 

We note that, given a sequence of smooth manifolds with non-negative scalar curvature, it is often possible (with addition assumptions) to extract a weak limiting space.  For example, one may appeal to the intrinsic flat compactness theorem of Wenger \cite{sormani2011intrinsic,wenger2011compactness} or the $d_p$-convergence theory of Lee-Naber-Neumayer \cite{lee2023dp}.  Of course, this limiting space may not be smooth in general.  Hence one would also like to ask Question \ref{general-question} when the limiting space is no longer smooth. An important first step in this direction is to develop suitable synthetic notions of non-negative scalar curvature on non-smooth spaces. 

Many different notions have been proposed.  For example, Burkhardt-Guim \cite{burkhardt2019pointwise} defined a notion of non-negative scalar curvature for $C^0$-metrics using Ricci flow, building on earlier work of Bamler \cite{bamler2016ricci}.  Lee and LeFloch \cite{lee2015positive} defined non-negative scalar curvature in the sense of distributions for $C^0\cap W^{1,n}$ metrics and obtained a positive mass theorem in this class. Another possibility is to say that a continuous metric has non-negative scalar curvature if it can be uniformly approximated by smooth metrics with almost non-negative scalar curvature.  The authors \cite{mazurowski2026positive} recently proved a positive mass theorem with rigidity for continuous metrics with non-negative scalar curvature in this sense. We mention that Burkhardt-Guim's work \cite{burkhardt2019pointwise} shows that manifolds with non-negative scalar curvature in the Ricci flow sense have non-negative scalar curvature in the sense of approximations. Likewise, the very recent work of Lee-Litzinger-Simon  \cite{lee2026spaces} shows that $C^0\cap W^{1,n}$ metrics with non-negative scalar curvature in the sense of distributions also have non-negative scalar curvature in the sense of approximations. Finally, we note that Gromov's dihedral rigidity theorem for polyhedra \cite{brendle2024scalar,li2020polyhedron,li2024dihedral,wang2021gromov} can also be used to give a synthetic definition of non-negative scalar curvature. 

Sormani \cite{sormani2023conjectures} conjectured that a sequence of smooth manifolds with non-negative scalar curvature, uniform volume and diameter bounds, and a uniform minA lower bound (a type of non-collapsing condition) should converge in the volume preserving intrinsic flat sense to a limit space with non-negative scalar curvature.  There are many partial results on this conjecture; see for example \cite{allen2026scalar,park2019compactness,sormani2025geometric,tian2024compactness,wang2026scalar}. On the other hand, the drawstring examples of Kazaras and Xu \cite{kazaras2023drawstrings}  suggest that one must be careful about the meaning of non-negative scalar curvature on the limiting space.  In another direction, Lee-Naber-Neumayer \cite{lee2023dp} studied sequences of manifolds with (almost) non-negative scalar curvature and nearly Euclidean entropy (another type of non-collapsing condition). They showed that, after passing to a subsequence, one can always obtain convergence in the $d_p$-sense to a limiting space with nice $W^{1,p}$ analysis.   

This further demonstrate the need to probe the precise threshold at which non-negative scalar curvature is preserved by limits.  To this end, one can ask whether the uniform bi-Lipschitz assumption can be weakened in Theorem \ref{main-theorem} and whether the entropy condition can be weakened in Theorem \ref{main-lp}.  This motivates the following question.

\begin{question}
Fix any number $1 \le p < \infty$. Assume that smooth metrics $g_k$ and $g$ satisfy $g_k \to g$ in $L^p$ and $g_k^{-1}\to g_k^{-1}$ in $L^p$. If each $g_k$ has non-negative scalar curvature, does it follow that $g$ has non-negative scalar curvature? If this is not true, is there any non-collapsing condition weaker than $\nu(g_k,2)\ge -\delta$ under which it is true? 
\end{question}

\subsection{Sketch of Proof} In \cite{mazurowski2026quantification}, the authors developed a method for using harmonic functions to study the scalar curvature of metrics which are controlled only in $C^0$. In the first part of this paper, we extend this method to the case of $p$-harmonic functions.  As we will explain, the use of $p$-harmonic functions with $p > 2$ plays an essential role in the proof of Theorem \ref{main-theorem}. 

Fix some $1 < p < 3$ and consider a positive $p$-harmonic function $u$ with connected level sets on a manifold $(M^3,g)$. Let 
\[
a = \frac{3-p}{p-1}
\]
so that $\vert x\vert^{-a}$ is $p$-harmonic on $\R^3$. We define a bulk integral quantity $D_p(r)$ which captures information about $u$ and $\grad u$ on the annular-type region $\{(4r)^{-a} < u < r^{-a}\}$. This reduces to the quantity $D(r)$ employed in \cite{mazurowski2026positive} and \cite{mazurowski2026quantification}  when $p=2$. It is straightforward to show that the quantity $D_p(r)$ depends continuously on $u$ in $W^{1,q}$ for every $q > 3$. 

Now assume that $g$ has non-negative scalar curvature. If $u$ has no critical points, the quantity $D_p(r)$ can be re-expressed as an integral of the quantity $F_p(t)$ introduced by Agostiniani-Mantegazza-Mazzieri-Oronzio in \cite{agostiniani2022riemannian}. Using the monotonicity formula for $F_p(t)$ from \cite{agostiniani2022riemannian}, it then follows that $r\mapsto r^a D_p(r)$ is non-decreasing. In general, the function $u$ may have critical points, and in fact it is  possible a priori that the set of regular values of $u$ is quite sparse. Since the function $F_p(t)$ is only defined for regular values, the above argument breaks down. Nevertheless, in this case one can still use the approximate monotonicity formula for the regularized functions $F_p^\eps(t)$ in \cite{agostiniani2022riemannian} and then pass to a limit as $\eps \to 0$ to deduce that $r\mapsto r^a D_p(r)$ is non-decreasing. We remark that the function $D_p(r)$ has the nice property that it is manifestly well-defined for all values of $r$ even in the presence of critical points, unlike $F_p(t)$ which may only be defined for a very small subset of the possible $t$ values. 

We now sketch the proof of Theorem \ref{main-theorem}.  The proof is by contradiction.  Suppose that the metrics $g_k$ all have non-negative scalar curvature but that there is a point $y$ with $R_g(y) < 0$. Fix a number $2 < p < 3$ to be specified later.   Consider the rescaled metrics $g_\rho(x) = g(\rho x)$ and $g_{k,\rho}(x) = g_k(\rho x)$ defined in the $g$-exponential coordinates centered at $y$.  Let $\an = B(0,16) - B(0,1/2)$ and let $u$ be the $p$-harmonic function in the $g_\rho$ metric which is equal to $\vert x\vert^{-a}$ on the boundary of $\an$. Let $D_p(r)$ be the above function associated to $u$. Using the fact that $g_\rho \to g_{\text{euc}}$ as $\rho \to 0$ and  exploiting the rotational symmetry of the Euclidean metric, the annulus $\an$, and the function $\vert x\vert^{-a}$, we can show that the sign of $D_p(1)-2^a D_p(2)$ has the opposite sign of $R_g(y)$ for small $\rho$. Since $R_g(y) < 0$, it follows that $D_p(1) > 2^aD_p(2)$ for sufficiently small $\rho$. This argument is designed to avoid having to work directly with a Green's function for the $p$-Laplacian, which was not  well-understood prior to the very recent paper \cite{agostiniani2026estimates}.

We fix such a small $\rho$ and relabel $g_\rho$ as $g$ and $g_{k,\rho}$ as $g_k$ for convenience. Let $u_k$ be the $p$-harmonic function on $\an$ in the $g_k$ metric which is equal to $\vert x\vert^{-a}$ on the boundary of $\an$. Since $g_k$ has non-negative scalar curvature and $u_k$ has connected level sets by the maximum principle, the above monotonicity discussion implies that $D_{k,p}(1) \le 2^a D_{k,p}(2)$ for all $k$.  Hence to obtain a contradiction, it suffices to show that $D_{k,p}(1) \to D_p(1)$ and $D_{k,p}(2) \to D_{k,p}(2)$ as $k\to \infty$. Using the fact that $g_k\to g$ in measure and the uniform bi-Lipschitz property, it is straightforward to show that $u_k \to u$ strongly in $W^{1,p}$. However, as explained above, to conclude the convergence of $D_{k,p}$ to $D_p$, one actually needs convergence in $W^{1,q}$ for some $q > 3$. 

To obtain this, we use Gehring's lemma. It is well-known that Gehring's lemma implies that the gradient of a $p$-harmonic function actually belongs to $L^{p+\eps}$ for some $\eps > 0$. In fact, by keeping careful track of the constants and using the $K$-bi-Lipschitz bound and the fact that $p$ belongs to the compact range $[2,3]$, one can show that the improvement of integrability constant $\eps = \eps(K) > 0$ depends only on $K$ and not $p$ or $k$. Thus we can select $p = p(K)$ close enough to $3$ so that $p + \eps(K) > 3$, and we obtain that the gradients $\grad u_k$ are uniformly bounded in $L^{p+\eps(K)}$. Since $u_k \to u$ in $W^{1,p}$ and $p + \eps(K)> 3$, it then follows easily that $u_k \to u$ in $W^{1,q}$ for some $q > 3$. This completes the proof.   

The proof of Theorem \ref{main-lp} follows essentially the same strategy.  The main difference is that we use the $d_P$-theory \cite{lee2023dp} to obtain the convergence of the $p$-harmonic functions and the uniform improvement of integrability.

\section{Examples}

In this section, we construct examples to show that scalar curvature lower bounds are not preserved under convergence in measure without further assumptions. Note that the metric 
\[
g_{0} = \left(\frac{4}{4+\vert x\vert^2}\right)^2 g_{\text{euc}}
\]
is a round metric of sectional curvature 1. The main observation that allows one to construct such examples is simply that 
\[
\| g_{0}-g_{\text{euc}}\|_{C^0(B(0,r))} \to 0
\]
as $r\to 0$.

We will also need the connect sum construction of Gromov and Lawson \cite{gromov1980classification}.  Fix two round spheres $S^3_1$ and $S^3_2$ of radius 1.  Fix small balls $B(p_1,s_1) \subset S^3_1$ and $B(p_2,s_2)\subset S^3_2$.  By the connect sum construction, there is a metric $h$ on the connect sum $S^3_1 \# S^3_2$ with scalar curvature $R(h) \ge 6-\eps$ such that $h$ agrees with the round metric on $S^3_1$ outside $B(p_1,s_1)$ and $h$ agrees with the round metric on $S^3_2$ outside $B(p_2,s_2)$. See \cite[Proposition 2.1.2]{sweeney2024tunnels} for the details of this construction in the case of positive scalar curvature lower bounds.  

We start with a special case that illustrates the main idea. 

\begin{prop}
    There are smooth metrics $g_k$ on the unit ball $B^3$ such that $R(g_k)\ge 1$ for all $k$ but $g_k \to g_{\operatorname{euc}}$ in measure. 
\end{prop}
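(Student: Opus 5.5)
Fix $k$ and cover almost all of $B^3$ by many tiny disjoint Euclidean balls. In each ball we replace the flat metric by a rescaled piece of a small-curvature-radius round sphere, which is $C^0$-close to Euclidean. We then join all these pieces by Gromov--Lawson connected sums inside the complementary set, which has tiny measure.

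\textbf{Step 1: local model.} Rescale $g_0$: for $\lambda>0$ set
\[
g_{0,\lambda}(x)=\left(\frac{4}{4+\lambda^{-2}\vert x\vert^2}\right)^2 g_{\text{euc}}(x)
\]
on $B(0,\lambda r)$. This is a round metric of sectional curvature $\lambda^{-2}$, so its scalar curvature is $6\lambda^{-2}$. Moreover,
\[
\|g_{0,\lambda}-g_{\text{euc}}\|_{C^0(B(0,\lambda r))}=\|g_0-g_{\text{euc}}\|_{C^0(B(0,r))}=:\eta(r)\to 0 .
\]
Take $r=r_k$ with $\eta(r_k)<1/k$. The ball $B(0,\lambda r_k)$ with this metric is isometric to a geodesic cap of a round sphere of radius $\lambda$, the cap having angular size about $r_k$. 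By Vitali covering, choose finitely many disjoint closed balls $B(x_i,\sigma_i)\subset B^3$ whose union has Euclidean measure at least $(1-1/k)\,\vert B^3\vert$, and with all radii $\sigma_i$ small. On $B(x_i,\sigma_i)$ put the metric $g_{0,\lambda_i}(\cdot - x_i)$ with $\lambda_i=\sigma_i/r_k$. Shrinking the balls to $B(x_i,\sigma_i(1-\theta))$ leaves a thin collar of room around each, and since the radii are small, $\lambda_i$ is small, so $R\ge 6\lambda_i^{-2}\ge 1$ comfortably.

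\textbf{Step 2: gluing.} On $\Omega_k=B^3\setminus\bigcup_i B(x_i,\sigma_i(1-\theta))$ we must build a metric with $R\ge 1$ that matches the round caps at their boundaries. Here I would use the Gromov--Lawson connect sum construction in its tunnel form. Regard each cap as the complement of a small ball in a round $3$-sphere of radius $\lambda_i$. The region $\Omega_k$ is topologically a ball with finitely many balls removed, hence diffeomorphic to a boundary connected sum. Precisely, $B^3$ is diffeomorphic to the boundary connected sum of the closed caps (each a $3$-ball) joined by thin $1$-handles (tubes), up to a diffeomorphism isotopic to the identity that moves only a set of measure less than $1/k$. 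So we do the following:
\begin{enumerate}[(a)]
\item Connect consecutive caps by Gromov--Lawson tunnels. Each tunnel has $R\ge 6-\eps$ in unit scale, hence $R\gg 1$ after scaling by $\lambda_i$, and each is attached inside a tiny geodesic ball near the cap boundary.
\item Attach the outer boundary region, a thin neighborhood of $\partial B^3$, by one more tunnel to a small round piece.
\end{enumerate}
The resulting manifold $(B^3,g_k)$ is realized via a diffeomorphism $\Phi_k$ that is the identity on the shrunken caps. On the caps, $\vert g_k-g_{\text{euc}}\vert<1/k$, and the caps exhaust all but measure $O(1/k)$ of $B^3$. Hence $g_k\to g_{\text{euc}}$ in measure.

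\textbf{Main obstacle.} The delicate point is the topology and bookkeeping in Step 2. We must identify the glued space (caps plus tubes) with the fixed ball $B^3$ by a diffeomorphism that is the identity on the caps and confines all the ``junk'' (tubes and boundary region) to a set of measure $<1/k$, and the metric must remain smooth with $R\ge 1$ up to $\partial B^3$. I would handle this by arranging the caps in a tree: join each new cap by a tube to the previous piece, so that boundary-connect-summing $3$-balls yields a $3$-ball. The tubes are embedded along thin arcs in $\Omega_k$, and the leftover region of $\Omega_k$ is absorbed into a collar. In this picture the metric on the leftover region of $\Omega_k$ never needs to be Euclidean-close, only to have $R\ge 1$, and the Gromov--Lawson construction \cite[Proposition 2.1.2]{sweeney2024tunnels} supplies this.
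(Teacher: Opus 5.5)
Your overall strategy matches the paper's: small round caps that are $C^0$-close to Euclidean fill most of the measure, and all the junk sits in a small-measure set where the metric only needs $R\ge 1$. Step 1 is fine; your rescaled caps $g_{0,\lambda_i}$ are a harmless variant of the paper's unit-sphere caps on balls of radius $r_i\le r_0$. The gap is in Step 2, which carries the real content of the proof.

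\textbf{The gap.} A Gromov--Lawson tunnel is an \emph{interior} connected sum, and you attach each one at a tiny ball inside a cap. The interior connected sum of two $3$-balls is $S^2\times[0,1]$, not a ball, because both cap boundary spheres survive. Joining $n$ caps in a tree this way gives $S^3$ with $n$ balls removed, and step (b) only adds more boundary spheres. So the manifold you build is not diffeomorphic to $B^3$, and the map $\Phi_k$ does not exist.

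Your fallback, boundary connected sum along $1$-handles, has the right topology. However, Gromov--Lawson does not directly give a psc metric on a boundary connected sum. It also does not ``supply'' an $R\ge 1$ metric on the leftover part of $\Omega_k$: it only builds necks between two given psc manifolds and never fills a prescribed region. Relatedly, a cap of angular size about $r_k$ is a small ball in the sphere, not the complement of one.

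\textbf{How the paper avoids this.} It takes the round metric $g_0$ on \emph{all} of $B^3$ as a background, so $\partial B^3$ and the small-measure set $C$ already have $R=6$. Inside each slightly enlarged ball $B_i'$ it connect-sums on an \emph{entire} unit round sphere. A small, near-Euclidean cap of that sphere is placed on $B_i$, and the rest of the sphere, the neck, and the gluing region are hidden in the thin annulus $B_i'\setminus B_i$. Each modification is supported in $B_i'$ and is topologically $B^3\# S^3\cong B^3$, so no global bookkeeping is needed.

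\textbf{How to repair your tree version.} Keep the round metrics on the slightly larger balls $B(x_i,\sigma_i)$ and join them by interior Gromov--Lawson necks at points in the collars. Then discard most of the result. Keep only a regular neighborhood $D$ of the union of the shrunken caps with disjoint arcs running through the necks. $D$ is a smooth $3$-ball with $R\ge 1$. By the disc theorem there is a diffeomorphism $\overline{B^3}\to D$ that is the identity on the shrunken caps, and you define $g_k$ as the pullback. With this step, (b) and any separate metric on the leftover region become unnecessary.
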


\begin{proof}
It suffices to show that for each $\eps > 0$ there is a smooth metric $g_\eps$ on $B^3$ with $R(g_\eps) \ge 1$ which satisfies 
\[
\vert \{x\in B^3: \vert g_\eps(x) - g_{\text{euc}}(x)\vert \ge \eps \}\vert \le \eps,
\]
where the vertical bars denote the Euclidean measure of a set. So fix some $\eps > 0$.  Select $r_0 > 0$ small enough that 
\[
\|g_0 - g_{\text{euc}}\|_{C^0(B(0,r_0))} < \eps. 
\]
Choose a finite collection of balls $B_i = B(x_i,r_i)$ for $i=1,\hdots,k$ such that 
\begin{itemize}
    \item[(i)] $\overline B_i\subset B^3$ for all $i$,
    \item[(ii)] the closed balls $\overline B_i$ are all disjoint,
    \item[(iii)] the set $B^3 - \left(\cup_{i=1}^k B_i\right)$ has Euclidean measure less than $\eps$, and
    \item[(iv)] $r_i \le r_0$ for all $i$.
\end{itemize}
Now select $a > 1$ close enough to 1 that the balls $B_i' = B(x_i,a r_i)$ still satisfy 
\begin{itemize}
    \item [(i')] $\overline {B_i'} \subset B^3$, and
    \item[(ii')] the closed balls $\overline{B_i'}$ are all disjoint. 
\end{itemize}
Define the set $C = B^3 - \left(\cup_{i=1}^k B_i'\right)$. 

We now proceed to define $g_\eps$. Fix some $i\in \{1,\hdots,k\}$. By the connect sum construction, there exists a metric $g_{\eps,i}$ on $B^3$ such that $R(g_{\eps,i}) \ge 6-\eps$, and $g_{\eps,i} = g_0$ outside $B_i'$, and the restriction of $g_{\eps,i}$ to $B_i'$ represents a round sphere of radius 1 with a small ball deleted and a neck attached connecting back to the round metric on $B^3 - B_i'$. We can further suppose that the restriction of $g_{\eps,i}$ to $B_i$ is given by the formula
\[
g_{\eps,i} = \left(\frac{4}{4+\vert x-x_i\vert^2}\right)^2 g_{\text{euc}}.
\]
Thus $B_i$ corresponds to a small ball in the sphere of radius 1 that was attached, and the remainder of this sphere together with the neck and gluing regions are all contained in the small annulus $B_i' - B_i$. 
\[
\includegraphics[width=4in]{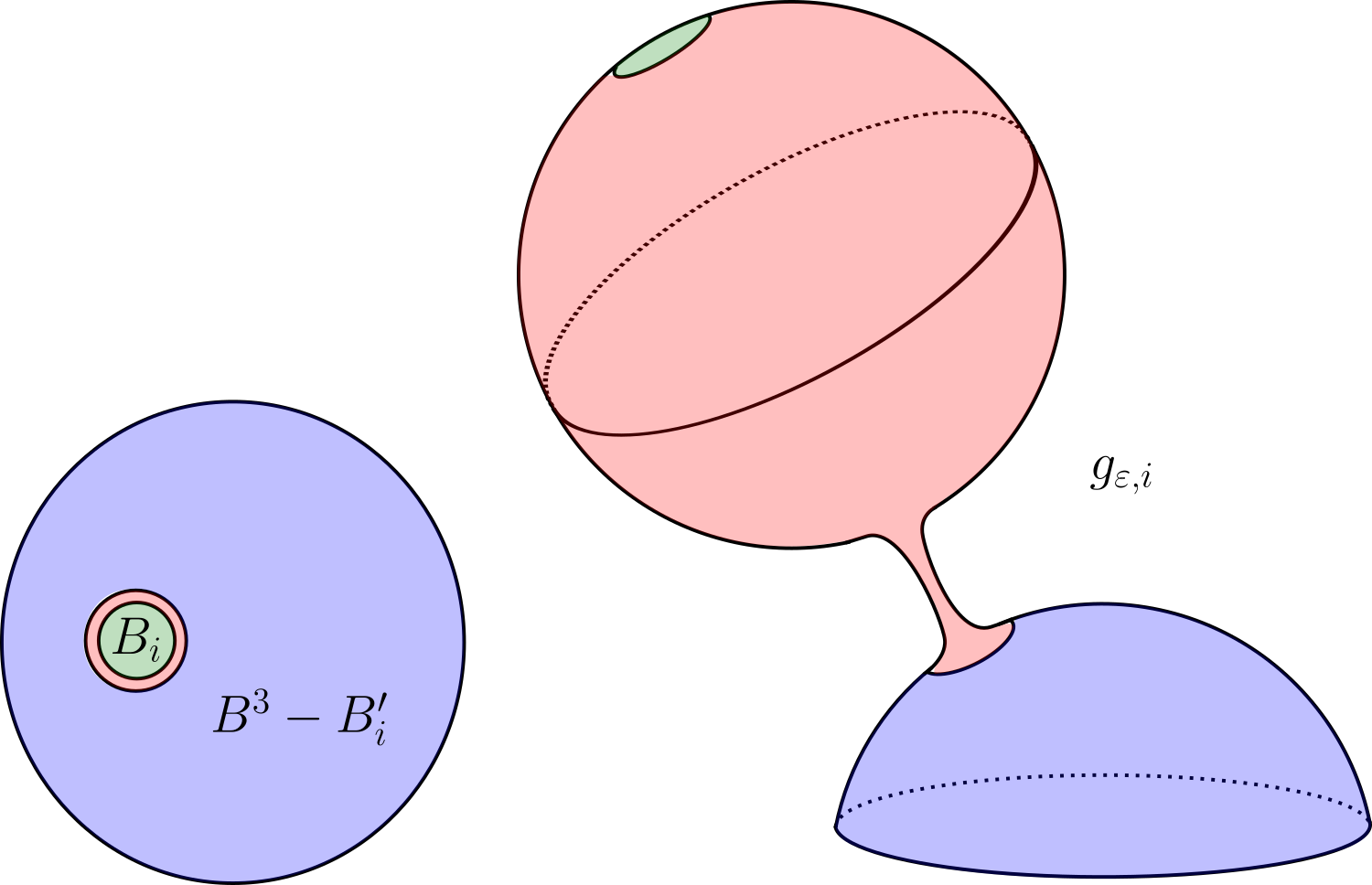}
\]
Finally, we define $g_\eps$ so that $g_\eps = g_{\eps,i}$ on $B_i'$ and $g_\eps = g_0$ on $C$. It is easy to verify that $g_\eps$ is as required. Indeed, the metric $g_\eps$ has $R(g_\eps)\ge 6-\eps$, and satisfies $\vert g_\eps(x) - g_{\text{euc}}(x)\vert < \eps$ for all $x\in \cup_{i=1}^k B_i$, and the Euclidean measure of $B^3 - \cup_{i=1}^k B_i$ is less than $\eps$. 
\end{proof}

By making minor modifications in the above construction, one has the following more general result. 

\begin{theorem}
    Fix any smooth metric $g$ on the unit ball $B^3$. Then there are smooth metrics $g_k$ on $B^3$ such that $R(g_k) \ge 1$ for all $k$ and $g_k$ converges to $g$ in measure.
\end{theorem}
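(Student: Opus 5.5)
The plan is to run the proof of the preceding proposition with $g_{\text{euc}}$ replaced by $g$. Fix $\eps>0$. It suffices to produce a smooth metric $g_\eps$ on $B^3$ with $R(g_\eps)\ge 1$ and
\[
\vert\{x\in B^3 : \vert g_\eps(x)-g(x)\vert \ge \eps\}\vert \le \eps .
\]
The central local fact is as follows. For each point $x_0\in B^3$ and each small radius, there is a round metric of curvature $\lambda^2$ that is $C^0$-close to $g$ on the small ball. Fix $g$-normal coordinates at $x_0$ via a linear map $A_{x_0}$ with $A_{x_0}^* g_{\text{euc}} = g(x_0)$. Then consider
\[
h_{x_0} = A_{x_0}^*\left(\frac{4}{4+\vert A_{x_0}(x-x_0)\vert^2}\right)^2 g_{\text{euc}} .
\]
This is round of sectional curvature $1$ and equals $g(x_0)$ at $x_0$.

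By continuity of $g$ and uniform continuity on compact subsets, there is an $r_0>0$ with $\|h_{x_0}-g\|_{C^0(B(x_0,r))}<\eps$ for every $x_0$ in $B(0,1-\eta)$ and every $r\le r_0$. Here $\eta$ is chosen so that the shell $B^3\setminus B(0,1-\eta)$ has measure below $\eps/2$. Since $g$ is only smooth on the open ball, I restrict to this compact subball to get the uniform $r_0$. To guarantee $R\ge 1$ we only need sectional curvature of order one, so curvature $1$ round pieces suffice.

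Next, choose finitely many disjoint closed balls $\overline B_i=\overline{B(x_i,r_i)}\subset B(0,1-\eta)$ with $r_i\le r_0$, covering $B^3$ up to measure $\eps/2$. Slightly enlarge them to disjoint balls $B_i'=B(x_i,ar_i)$, and set $C=B^3\setminus\bigcup B_i'$. On each $B_i'$, I use the connect sum construction of Gromov--Lawson as in the proposition, but now gluing a unit round sphere onto the round metric $h_{x_i}$ restricted to $B_i$. Concretely, on $B_i'$ I take the metric that equals $h_{x_i}$ on $B_i$, and whose annulus $B_i'\setminus B_i$ contains the rest of that round unit sphere, a Gromov--Lawson neck, and the gluing region. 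This gives $R\ge 6-\eps$ inside $B_i'$.

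The hard step is the boundary of $B_i'$ and the region $C$. In the proposition, the single global round metric $g_0$ served as the common outside metric. Here the outside metric must be one fixed PSC metric on $C$ that connects to all the pieces. My choice is to put on $C$ the fixed round metric $g_0$ (any metric with $R\ge 1$ works, since $C$ has small measure and its values there are irrelevant). In each annulus $B_i'\setminus B_i$, I then use a connect sum between two round unit spheres. The first is the sphere containing the chart $h_{x_i}$ on $B_i$, which after the linear change $A_{x_i}$ is an honest round chart. The second is the sphere carrying $g_0$ near $\partial B_i'$.

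Both are round of curvature $1$, so the Gromov--Lawson construction of \cite[Proposition 2.1.2]{sweeney2024tunnels} applies verbatim. It glues a small ball removed from each, inside the annulus, producing $R\ge 6-\eps$. The one thing to check is that the images $A_{x_i}(B_i)$ and $B_i'$ can be nested so that the gluing happens inside $B_i'\setminus B_i$. If the ellipsoid $A_{x_i}$-distortion conflicts with the round ball $B_i$, I will replace the Euclidean balls $B_i$ by the ellipsoids $A_{x_i}^{-1}$-balls in the Vitali covering. This is harmless because Vitali coverings work with such uniformly comparable convex sets.

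Finally, $g_\eps$ is smooth with $R\ge 6-\eps\ge 1$. It agrees with $h_{x_i}$ on $B_i$, so it is $\eps$-close to $g$ there. The exceptional set lies in $C\cup(B^3\setminus B(0,1-\eta))$ together with the annuli, and has measure at most $\eps$. Taking $\eps=1/k$ then gives the required sequence $g_k$.
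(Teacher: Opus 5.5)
Your proposal is correct and is essentially the paper's proof. Since $A_{x_0}^TA_{x_0}=g(x_0)$, your $h_{x_0}$ is exactly the paper's local round model $\left(\frac{4}{4+(x-x_0)^Tg(x_0)(x-x_0)}\right)^2 g(x_0)$, and the rest repeats the construction of the preceding proposition, with $g_0$ on $C$ and Gromov--Lawson necks inside the annuli $B_i'\setminus B_i$. Restricting to $B(0,1-\eta)$ is a sensible precaution that makes the paper's uniformity in $x_0$ honest. The worry about ellipsoidal distortion is unnecessary: the metric on $B_i'\setminus B_i$ is transplanted by an abstract diffeomorphism of the annulus, and the chart image of $\overline{B_i}$ is just a smoothly embedded closed ball in that sphere.
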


\begin{proof}
Let $g$ be a smooth metric on $B^3$.  For any symmetric, positive definite, $3\times 3$ matrix $A$,  the metric 
\[
(g_A)(x) = \left(\frac{4}{4 + x^T A x}\right)^2 A
\]
is a round metric of sectional curvature 1. Since $g$ is smooth, we have 
\[
\left\|\left(\frac{4}{4 + (x-x_0)^T g(x_0) (x-x_0)}\right)^2 g(x_0)  - g(x)\right\|_{C^0(B(x_0,r))} \to 0
\]
as $r\to 0$ uniformly in $x_0$. Hence we can simply repeat the above construction, but make the metric $g_\eps$ equal to 
\[
\left(\frac{4}{4 + (x-x_i)^T g(x_i) (x-x_i)}\right)^2 g(x_i)
\]
on $B_i$. 
\end{proof}

\section{The Annular Quantity}
\label{Section-Monotonicity}

Fix some $1 < p < 3$. Define 
\[
a  := \frac{3-p}{p-1}
\]
and take $\vert x\vert^{-a}$ as the model $p$-harmonic function on Euclidean space.  
Consider the annulus $\an = B(0,16)- B(0,1/2)$ equipped with a Riemannian metric $g$. Let $u$ be a $p$-harmonic function on $\an$ in the $g$ metric which equals $\vert x\vert^{-a}$ on the boundary of $\an$. 

\subsection{Preliminaries}  We are going to use a function defined by Agostiniani-Mantegazza-Mazzieri-Oronzio \cite{agostiniani2022riemannian}.  However, we will use somewhat different notation as explained below.  To clarify, we will put a tilde over the objects from the original paper \cite{agostiniani2022riemannian} whenever there is some chance of confusion.  

Consider the $p$-harmonic function $u$ defined above. We set $H = -\div({\grad u}/{\vert \grad u\vert})$.  Define $\tilde u = 1 - u$ and let $\widetilde H = \div(\grad \tilde u/\vert \grad \tilde u\vert)= H$. Let 
\[
c_p = \left(\frac{\operatorname{Cap}_p(\tilde u)}{4\pi}\right)^{\frac{1}{p-1}}
\]
where $\operatorname{Cap}_p(\tilde u)$ is given by 
\[
\operatorname{Cap}_p(\tilde u) = \int_{\{\tilde u= s\}} \vert \grad \tilde u\vert^{p-1}\, d\sigma
\]
for any regular value $s$ of $\tilde u$. Further define 
\[
t_p = \left(\frac{ c_p}{a}\right)^{1/a}, \quad \alpha_p(t) = 1 - \left(\frac{t_p}{t}\right)^a.
\]
In \cite{agostiniani2022riemannian}, they define   
\begin{align*}
\widetilde F(t) &= 4\pi t - \frac{t^{\frac 2{p-1}}}{ c_p} \int_{\{\tilde u = \alpha_p(t)\}} \widetilde H \vert \grad \tilde u\vert\, d\sigma + \frac{t^{\frac{5-p}{p-1}}}{ c_p^2}\int_{\{\tilde u = \alpha_p(t)\}} \vert \grad \tilde u\vert^2\, d\sigma\\
&= 4\pi t -  c_p^{-1} t^{1+a} \int_{\{\tilde u = \alpha_p(t)\}} \widetilde H \vert \grad \tilde u\vert\, d\sigma +  c_p^{-2} t^{1+2a} \int_{\{\tilde u = \alpha_p(t)\}} \vert \grad \tilde u\vert^2\, d\sigma
\end{align*}
for regular values $\alpha_p(t)$. 

Now observe that $u = t^{-a} \iff \tilde u = 1-t^{-a} \iff \tilde u = \alpha_p(t_p t)$. We define the re-parameterized function 
\begin{align*}
F(t) &= t_p^{-1} \widetilde F(t_p t) \\
&= 4\pi t - a^{-1} t^{1+a} \int_{\{u = t^{-a}\}} H\vert \grad u\vert\, d\sigma + a^{-2} t^{1+2a} \int_{\{u = t^{-a}\}} \vert \grad u\vert^2\, da.
\end{align*} 
We will write things in terms of this quantity $F$ associated to $u$. If $g$ has non-negative scalar curvature, then the monotonicity result from \cite{agostiniani2022riemannian} implies that $F(t_1) \le F(t_2)$ whenever $t_1 < t_2$ are such that $t_1^{-a}$ and $t_2^{-a}$ are regular values of $u$. 

\subsection{The Smooth Case} 
Now suppose for the moment that $u$ is smooth with no critical points. Note that in this case all level sets of $u$ are spheres. Moreover,  we have the following differentiability relation: 
\[
\frac{d}{dt} \int_{\{u=t^{-a}\}} \vert \grad u\vert^2\, da = -a^2 t^{-a-1} \int_{\{u=t^{-a}\}} H\vert \grad u\vert\, da. 
\]
Thus we can compute 
\begin{align*}
E(r,s) &:= \int_{r+s}^{2r+2s} \frac{F(t)}{t^{a+2}}\, dt \\
&= \frac{4\pi(1-2^{-a})}{a(r+s)^{-b}} + a^{-3} \int_{\{u=(2r+2s)^{-a}\}} \frac{\vert \grad u\vert^2}{u}\, d\sigma - a^{-3}\int_{\{u=(r+s)^{-a}\}} \frac{\vert \grad u\vert^2}{u}\, d\sigma. 
\end{align*}
Now let $\psi$ be a smooth bump function supported in $(0,1)$. Define the function  
\[
D(r) = \int_0^r \psi(s/r) E(r,s)\, ds. 
\]
We can calculate 
\begin{align*}
D(r) &= r^{1-a} c_\psi + \frac 1{a^4} \int \left[\frac 1 2 \psi\left(\frac{1}{2r u^{1/a}}-1\right)-\psi\left(\frac{1}{ru^{1/a}}-1\right)\right] \frac{\vert \grad u\vert^3}{u^{2+1/a}}\, dv\\
&= r^{1-a} c_\psi + \int \varphi(r,u) \vert \grad u\vert^3 \, dv
\end{align*}
where $c_\psi$ is a constant depending only on $\psi$ and we set 
\begin{equation}
\varphi(r,t) =  \frac{1}{a^4 t^{2+1/a}} \left[\frac 1 2 \psi\left(\frac{1}{2r t^{1/a}}-1\right)-\psi\left(\frac{1}{rt^{1/a}}-1\right)\right].
\label{phi-equation}
\end{equation}
It is easy to check that if $F$ is non-decreasing then  $r\mapsto r^a D(r)$ is also non-decreasing.  

Indeed, by definition of $D(r)$ and the smoothness assumption on the $p$-harmonic function $u$, we have 
\[
r^aD(r)=\int_0^{r}\psi(s/r)r^a\left[\int_{r+s}^{2r+2s}\frac{F(t)}{t^{a+2}}dt\right]ds.
\]
We make change of variables $m^{1/a}(r+s)=t$ in the inner integral and obtain
\begin{align*}
r^aD(r)=\int_0^r\psi(s/r)r^a\left[\int_1^{2^a}\frac{F((r+s)m^{1/a})}{m^{2+1/a}(r+s)^{a+1}}dm\right]ds.
\end{align*}
We further substitute $v=s/r$ and obtain
\begin{align*}
    r^aD(r)=\int_0^1\frac{\psi(v)}{(1+v)^{a+1}}\left[\int_{1}^{2^a}\frac{F((1+v)rm^{1/a})}{m^{2+1/a}}dm\right]dv.
\end{align*}
The monotonicity of $r^aD(r)$ then follows from the monotonicity of $F(t)$.
\subsection{The General Case} 

In general, the function $u$ may not be smooth, and in this case the function $F$ may not be defined for many values of $t$.  The function $D(r)$ defined above has the advantage that it is manifestly well-defined for all values of $r$.  Assume that $g$ has non-negative scalar curvature. We would like to show that $r\mapsto r^a D(r)$ is still monotone even when the argument in the previous section breaks down.

As in \cite{agostiniani2022riemannian}, we first fix a large annular type region $M_{S,T}=\{S\leq u\leq T\}$, where $S,T$ are some regular values of $u$. We consider the $\eps$-perturbed $p$-harmonic function satisfying 
\begin{align*}
    \left\{\begin{matrix}
        \div\left(\left(\sqrt{|\nabla u_{\eps}|^2+\eps^2}\right)^{p-2}\nabla u_{\eps}\right)=0 & \text{ in } M_{S,T}\\
        u_{\eps}=S & \text{ on } \{u=S\}\\
        u_{\eps}=T & \text{ on } \{u=T\}.
    \end{matrix}\right.
\end{align*}
We summarize the basic analytical properties of $u_{\eps}$ in the following lemma.

\begin{lemma}\label{lem: appro reg}
    The function $u_{\eps}$ converges to $u$ on any compact subset of $M_{S,T}$ in $C^{1,\beta}$-topology as $\eps\to 0$. Furthermore, $u_{\eps}$ converges to $u$ smoothly on any compact set of $M_{S,T}$ with critical points removed. The critical points of $u_{\eps}$ have measure zero, and $u_{\eps}$ is smooth up to the boundary.
\end{lemma}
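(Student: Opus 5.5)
We treat $u_\eps$ as the solution of a uniformly elliptic, nondegenerate quasilinear equation and pass to the limit $\eps\to0$ using estimates that do not depend on $\eps$, following the scheme of \cite{agostiniani2022riemannian}. The equation can be written in nondivergence form as
\[
\left(\delta_{ij} + (p-2)\frac{\partial_i u_\eps \partial_j u_\eps}{|\grad u_\eps|^2+\eps^2}\right)\grad^2_{ij} u_\eps = 0 ,
\]
up to lower order terms coming from the metric $g$. Its coefficient matrix has eigenvalues between $\min(1,p-1)$ and $\max(1,p-1)$, independently of $\eps$.

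\textbf{Step 1: regularity and uniform bounds.} For fixed $\eps>0$ the equation is uniformly elliptic with smooth coefficients. Standard quasilinear theory (Ladyzhenskaya--Ural'tseva, together with Schauder estimates at the smooth boundary $\{u=S\}\cup\{u=T\}$) then gives a unique solution that is smooth up to the boundary; the boundary is smooth because $S,T$ are regular values of $u$. By the comparison principle, $S\le u_\eps\le T$. Next we need a gradient bound and a $C^{1,\beta}$ bound uniform in $\eps$.
\begin{itemize}
\item Interior: use DiBenedetto/Tolksdorf/Lieberman estimates for equations of $p$-Laplacian type with structure $A_\eps(\xi)=(|\xi|^2+\eps^2)^{(p-2)/2}\xi$. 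These are uniform in $\eps\in(0,1]$.
\item Near the boundary: $u$ has nonvanishing gradient there, so we build barriers from $u$ itself. This bounds $|\grad u_\eps|$ at the boundary, and Lieberman's boundary $C^{1,\beta}$ estimate applies.
\end{itemize}
By Arzelà--Ascoli, a subsequence converges in $C^{1,\beta'}$ to some $w$ for $\beta'<\beta$. Testing the weak formulation and using monotonicity of the operator shows that $w$ is $p$-harmonic with the same boundary data as $u$. Uniqueness of the Dirichlet problem for $p$-harmonic functions (from strict monotonicity) gives $w=u$, so the whole family converges.

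\textbf{Step 2: smooth convergence away from critical points.} On a compact set $K$ free of critical points of $u$, we have $|\grad u|\ge c>0$. The $C^1$ convergence then gives $|\grad u_\eps|\ge c/2$ on $K$ for small $\eps$. On this set the equations for $u_\eps$ are uniformly elliptic with coefficients bounded in $C^{\beta}$, uniformly in $\eps$. Schauder bootstrapping yields uniform $C^{k}$ bounds for every $k$, and hence smooth convergence.

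\textbf{Step 3: critical set has measure zero.} This is the main obstacle, since the generic "non-degenerate" arguments do not apply directly. I would first try the approach of \cite{agostiniani2022riemannian}, which relies on the fact that each $u_\eps$ solves a smooth elliptic equation. Differentiating the equation, each partial derivative $v=\partial_k u_\eps$ (in local coordinates) satisfies a linear elliptic system with smooth coefficients. The key fact to use is that at a critical point the Hessian cannot vanish on a set of positive measure unless $u_\eps$ is locally constant. Concretely:
\begin{itemize}
\item If $\{\grad u_\eps=0\}$ had positive measure, then at almost every point of it (a density point) all second derivatives vanish as well, since $\grad^2 u_\eps=0$ a.e.\ on $\{\grad u_\eps=0\}$ for Sobolev functions.
\item Apply this iteratively via the linear equations for derivatives, or use strong unique continuation for the linear uniformly elliptic equation satisfied by $u_\eps - u_\eps(x_0)$ with frozen smooth coefficients. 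This gives vanishing of infinite order at a point.
\item Unique continuation then forces $u_\eps$ to be constant on the connected region, contradicting the boundary values $S<T$.
\end{itemize}
Alternatively, one could cite the known result that solutions of smooth uniformly elliptic quasilinear equations are real-analytic when the metric is analytic; in the general smooth case, however, the unique continuation argument is the route I would pursue.
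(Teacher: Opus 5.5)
Your proposal is correct and follows essentially the same route as the paper. The paper's proof simply cites DiBenedetto-type $C^{1,\beta}$ estimates (uniform in $\eps$) for the convergence, and invokes the maximum principle, the Hopf lemma and unique continuation for the non-degenerate regularized equation to get smoothness and the measure-zero critical set; your Steps 1--3 supply the details behind those citations, with two small clarifications: Lieberman's global estimate, which is uniform in $\eps$, already makes your boundary barriers unnecessary, and in Step 3 the infinite-order vanishing at a density point comes from iterating Stampacchia's lemma, after which strong unique continuation for the linear equation satisfied by $u_\eps-u_\eps(x_0)$ gives the contradiction.
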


\begin{proof}
    See \cite{ dibenedetto1982c,benedetio1983interior} for the convergence results. Since the $\eps$-perturbed $p$-harmonic function is a divergence form non-degenerate equation, we can apply the weak and strong maximum principle as well as the Hopf lemma and the unique continuation property. As a result, we obtain the desired regularity properties.
\end{proof}

The strategy is to apply arguments similar to those in the previous section to $u_{\eps}$ and obtain the desired monotonicity by taking $\eps\to 0$.
Let
\[
D_{\eps}(r):=\int_0^r\psi(s/r)\left[\int_{r+s}^{2r+2s}\frac{F_{\eps}(t)}{t^{a+2}}\,dt\right]\, ds,
\]
where
\[
F_{\eps}(t):=4\pi t-a^{-1}t^{1+a}\int_{\{u_{\eps}=t^{-a}\}}H|\nabla u_{\eps}|\, da+a^{-2}t^{1+2a}\int_{\{u_{\eps}=t^{-a}\}}|\nabla u_{\eps}|^2\, da.
\]
As before, our notion of $F_{\eps}(t)$ is a reparameterization of $\tilde{F}_{\eps}(t)$, which is defined as 
\[
\tilde{F}_{\eps}(t)=4\pi t-c_{p,\eps}^{-1}t^{1+a}\int_{\{\tilde{u}_{\eps}=\alpha_{p}^{\eps}(t)\}}\widetilde{H}|\nabla \tilde{u}_{\eps}|\, da+a^{-2}t^{1+2a}\int_{\{\tilde{u}_{\eps}=\alpha_p^{\eps}(t)\}}|\nabla \tilde{u}_{\eps}|^2\, da
\]
in \cite{agostiniani2022riemannian}. More precisely, we have $F_\eps(t) = t_{p,\eps}^{-1} \tilde F_\eps(t_{p,\eps} t)$. In the above, 
the capacity of the $\eps$-perturbed function is defined as 
\[
\operatorname{Cap}_p(\tilde{u}_{\eps}):=\int_{\{\tilde{u}_{\eps}=s\}}\left(\sqrt{|\nabla \tilde{u}_\eps|^2+\eps^2}\right)^{p-2}|\nabla\tilde{u}_{\eps}|\, da,
\]
and 
\[
c_{p,\eps}=\left(\frac{\operatorname{Cap}_p(\tilde{u}_{\eps})}{4\pi}\right)^{\frac{1}{p-1}},\quad t_{p,\eps}=\left(\frac{c_{p,\eps}}{a}\right)^{1/a}, \quad \alpha_{p}^{\eps}(t)=1-\left(\frac{t_p}{t}\right)^a.
\]
We recall the almost monotonicity formula from \cite{agostiniani2022riemannian}, adapted to our notation. 
\begin{lemma}[Approximate monotonicity \cite{agostiniani2022riemannian}]
      For any $S<t_1<t_2<T$ regular values of $u_{\eps}$, we have that 
      \[
      F_{\eps}(t_2)-F_{\eps}(t_1)\geq -C\eps\int_{\{t_{2}^{-a}\leq u_{\eps}\leq t_{1}^{-a}\}}\frac{|\nabla u_{\eps}|^2}{u_{\eps}^{3+1/a}}\, dv,
      \]
      where $C$ is a positive constant independent of $t_1,t_2$.
\end{lemma}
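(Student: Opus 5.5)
This is the approximate monotonicity formula of \cite{agostiniani2022riemannian}, rewritten in our reparameterized notation. The plan is to take their estimate for $\widetilde F_\eps$ as given and translate it into a statement about $F_\eps$.

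\textbf{Step 1: the estimate in their normalization.} In the region $M_{S,T}$ the function $u_\eps$ is smooth up to the boundary and its critical set has measure zero, by Lemma \ref{lem: appro reg}. In this setting \cite{agostiniani2022riemannian} compute $\frac{d}{dt}\widetilde F_\eps$ at regular values. They integrate by parts, using the Bochner formula for the regularized $p$-Laplacian together with the traced Gauss equation, and they use $R_g\ge 0$ and the connectedness of the level sets (Gauss--Bonnet gives $\int_\Sigma R_\Sigma \le 8\pi$). The outcome is that $\widetilde F_\eps(\tau_2)-\widetilde F_\eps(\tau_1)$ is bounded below by a nonnegative bulk term minus an error. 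The error comes from the discrepancy between $(|\nabla u_\eps|^2+\eps^2)^{(p-2)/2}$ and $|\nabla u_\eps|^{p-2}$. Expanding this discrepancy bounds the error by $C\eps$ times a bulk integral of $|\nabla \tilde u_\eps|^2$ against a weight that is a power of $(1-\tilde u_\eps)$. I will quote this inequality directly rather than re-derive it.

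\textbf{Step 2: change variables.} We have $F_\eps(t)=t_{p,\eps}^{-1}\widetilde F_\eps(t_{p,\eps}t)$ and $\tilde u_\eps=1-u_\eps$. So the level $\{u_\eps=t^{-a}\}$ corresponds to $\{\tilde u_\eps=\alpha^\eps_p(t_{p,\eps}t)\}$, and regular values correspond to regular values. Apply Step 1 with $\tau_i=t_{p,\eps}t_i$ and divide by $t_{p,\eps}$. In the error integral, rewrite the weight in terms of $u_\eps=1-\tilde u_\eps$. Since the model satisfies $u\sim t^{-a}$, the weight's power of $t$ becomes a power of $u_\eps$, and checking the exponent should produce exactly $u_\eps^{-(3+1/a)}$. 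The integration region is $\{t_2^{-a}\le u_\eps\le t_1^{-a}\}$.

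\textbf{Step 3: uniform constant.} The constant $C$ picks up powers of $t_{p,\eps}$ and $c_{p,\eps}$. I must check that these stay bounded as $\eps\to0$, since the constant is later sent through the limit $\eps\to 0$. This follows because $\operatorname{Cap}_p(\tilde u_\eps)\to\operatorname{Cap}_p(\tilde u)>0$, by the $C^{1,\beta}$ convergence in Lemma \ref{lem: appro reg} applied on a fixed regular level set; both the flux and its limit are positive by the Hopf lemma.

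\textbf{Main obstacle.} The main work is the bookkeeping in Step 2: verifying that the weight transforms into exactly $u_\eps^{-3-1/a}$ and that $C$ depends only on $p,S,T$ and the capacities. A secondary point is that for $\eps>0$ the capacity is defined with the regularized flux, which is still independent of the level since $u_\eps$ solves a divergence-form equation; this justifies using $c_{p,\eps}$ in place of $c_p$.
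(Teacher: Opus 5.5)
This matches the paper, which gives no separate argument and simply quotes the almost-monotonicity estimate of \cite{agostiniani2022riemannian}, rewritten via $F_\eps(t)=t_{p,\eps}^{-1}\widetilde F_\eps(t_{p,\eps}t)$. Your Step 3 spells out that $C$ is uniform in $\eps$, which the paper uses tacitly when it lets $\eps\to0$, and the exponent bookkeeping you call the main obstacle is in fact harmless: $S\le u_\eps\le T$ on $M_{S,T}$ by the maximum principle, so any fixed power of $u_\eps$ in the weight is comparable to $u_\eps^{-3-1/a}$ with a constant depending only on $S$, $T$ and $a$, which the statement allows.
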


Now, it is straightforward to prove that $r^aD_{\eps}(r)$ is almost monotone. For fixed $r_2>r_1>0$, we have that 
\begin{align*}
    r_2^aD_{\eps}(r_2)-r_1^aD_{\eps}(r_1)=&\int_0^1\frac{\psi(v)}{(1+v)^{a+1}}\left[\int_1^{2^a}\frac{F_{\eps}((1+v)r_2m^{1/a})-F_{\eps}((1+v)r_1m^{1/a})}{m^{2+1/a}}\, dm\right]\, dv\\
    \geq &\int_0^1\frac{\psi(v)}{(1+v)^{a+1}}\left\{\int_1^{2^a}\frac{-C\eps}{m^{2+1/a}}\left[\int_{\{(4r_2)^{-a}\leq u_{\eps}\leq r_1^{-a}\}}\frac{|\nabla u_{\eps}|^2}{u_{\eps}^{3+1/a}}\, d\mu\right]dm\right\}\,dv\\
    \geq &-C(r_1,r_2,\psi)\eps \int_{M_{S,T}}|\nabla u_{\eps}|^2\, d\mu,
\end{align*}
where $C(r_1,r_2,\psi)$ is a positive constant depending only on $r_1,r_2,\psi$. 

\begin{lemma}
    For a fixed $r>0$, we have that 
    \[
    \lim_{\eps\to 0}D_{\eps}(r)=D(r).
    \]
\end{lemma}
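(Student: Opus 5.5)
Our plan is to convert $D_\eps(r)$ into a bulk integral over $M_{S,T}$ using the coarea formula, exactly as in the smooth case, and then pass to the limit with the $C^{1,\beta}$ convergence from Lemma \ref{lem: appro reg}. By Lemma \ref{lem: appro reg}, the critical set of $u_\eps$ has measure zero and $u_\eps$ is smooth. Hence almost every level is regular, and the differentiation identity
\[
\frac{d}{dt}\int_{\{u_\eps=t^{-a}\}}\vert\grad u_\eps\vert^2\,da = -a^2t^{-a-1}\int_{\{u_\eps=t^{-a}\}}H\vert\grad u_\eps\vert\,da
\]
holds in the distributional sense in $t$. Here $H$ is the mean curvature of the level sets of $u_\eps$. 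The cleanest route is to write $t\mapsto\int_{\{u_\eps=t^{-a}\}}\vert\grad u_\eps\vert^2\,da$ via coarea as the $t$-derivative of a bulk integral, and to check that the $H$-term is its derivative by the divergence theorem applied to $\vert\grad u_\eps\vert\grad u_\eps$ on sublevel regions.

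Repeating the computation of the smooth case for $u_\eps$, we then obtain
\[
D_\eps(r) = r^{1-a}c_\psi + \int_{M_{S,T}}\varphi(r,u_\eps)\vert\grad u_\eps\vert^3\,dv,
\]
with $\varphi$ as in \eqref{phi-equation}. For this we choose $S<(4r)^{-a}$ and $T>r^{-a}$ regular values of $u$, so that the support of $\varphi(r,\cdot)$ in the $u$-variable lies in $[(4r)^{-a},r^{-a}]$. By uniform convergence this set stays compactly inside the interior of $M_{S,T}$ for small $\eps$. The boundary terms produced by integrating in $s$ against $\psi(s/r)$ vanish, because $\psi$ is compactly supported in $(0,1)$. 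The same identity should hold for $u$ itself, giving the meaning of $D(r)$ in the general case; if $D(r)$ is defined directly by the bulk formula, this step is unnecessary for $u$.

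Given these formulas, the limit is straightforward. On the compact set $K=\{(4r)^{-a}-\delta\le u\le r^{-a}+\delta\}\subset\operatorname{int}M_{S,T}$, Lemma \ref{lem: appro reg} gives $u_\eps\to u$ in $C^1(K)$. Since $\varphi(r,\cdot)$ is smooth and bounded on the relevant range, we have $\varphi(r,u_\eps)\vert\grad u_\eps\vert^3\to\varphi(r,u)\vert\grad u\vert^3$ uniformly on $K$. Both integrands vanish outside $K$ for small $\eps$, so dominated (indeed uniform) convergence yields $D_\eps(r)\to D(r)$.

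The main obstacle is justifying the integration by parts in the first step when $u_\eps$ has critical points. There, $H$ is singular, and the identity between the level-set integrals of $H\vert\grad u_\eps\vert$ and the derivative of $\int\vert\grad u_\eps\vert^2$ must be checked across critical values. We would handle this by working with the smooth vector field $\vert\grad u_\eps\vert\grad u_\eps$. Its divergence equals $-H\vert\grad u_\eps\vert^2$ plus terms that are integrable near critical points, because $u_\eps$ solves a nondegenerate elliptic equation. We would also use Sard's theorem to restrict to regular values, and the integrability of $\vert\grad u_\eps\vert^{-1}\vert\nabla^2u_\eps\vert$-type quantities as established in \cite{agostiniani2022riemannian}.
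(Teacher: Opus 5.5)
Your architecture is the same as the paper's: apply the divergence theorem to the $C^1$ field $\vert\grad u_\eps\vert\grad u_\eps$ between regular values, use the coarea formula to get absolute continuity in $t$, redo the smooth-case integration by parts, and pass to the limit with Lemma \ref{lem: appro reg}. Your final limiting step for $\int\varphi(r,u_\eps)\vert\grad u_\eps\vert^3\,dv$ is fine.

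The gap is in the middle step. The differentiation identity you invoke is false for $u_\eps$ when $p\neq 2$, and so is your exact formula $D_\eps(r)=r^{1-a}c_\psi+\int\varphi(r,u_\eps)\vert\grad u_\eps\vert^3\,dv$. That identity rests on the $p$-Laplace equation. But $u_\eps$ solves the regularized equation, which gives
$\Delta u_\eps=-(p-2)\vert\grad u_\eps\vert\la\grad\vert\grad u_\eps\vert,\grad u_\eps\ra/(\vert\grad u_\eps\vert^2+\eps^2)$.
Redoing the computation, with $H=\div(\grad u_\eps/\vert\grad u_\eps\vert)$ as in the paper's proof, one finds
\[
\div\left(\vert\grad u_\eps\vert\grad u_\eps\right)=-aH\vert\grad u_\eps\vert^2+\frac{2(p-2)}{p-1}\cdot\frac{\eps^2\la\grad\vert\grad u_\eps\vert,\grad u_\eps\ra}{\vert\grad u_\eps\vert^2+\eps^2}.
\]
The second term is not an integrability nuisance localized at critical points. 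It is present wherever $\grad u_\eps\neq 0$ and propagates through the integration by parts. As a result, $D_\eps(r)$ equals your bulk expression plus an error $\int_0^r\psi(s/r)Q(r,s)\,ds$ built from this term. Your sentence ``given these formulas, the limit is straightforward'' implicitly sets this error to zero.

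The missing step is to show that this error vanishes as $\eps\to 0$. The paper uses the elementary inequality $\eps\vert\grad u_\eps\vert/(\vert\grad u_\eps\vert^2+\eps^2)\le\frac12$. This bounds the error by $C(r,\psi)\,\eps\int\vert\nabla^2u_\eps\vert$ over $\{(4r)^{-a}\le u_\eps\le r^{-a}\}$, which lies in a fixed compact subset of the interior of $M_{S,T}$ for small $\eps$.

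Making this tend to $0$ does not follow from the $C^{1,\beta}$ convergence in Lemma \ref{lem: appro reg}. The Hessians $\nabla^2u_\eps$ need not stay bounded near critical points of $u$, so you need a Hessian estimate that is quantitative in $\eps$. For instance, for $p\ge 2$ you can combine the standard uniform weighted bound
$\int_K(\vert\grad u_\eps\vert^2+\eps^2)^{\frac{p-2}{2}}\vert\nabla^2u_\eps\vert^2\,dv\le C(K)$
with Cauchy--Schwarz. This gives $\eps\int_K\vert\nabla^2u_\eps\vert\,dv\le C\eps^{(4-p)/2}\to 0$. Once the error term is included and controlled this way, the rest of your argument goes through.
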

\begin{proof}

Since $u_\eps$ is smooth and $\{\grad u_\eps = 0\}$ has measure zero, the vector field $\vert \grad u_\eps\vert \grad u_\eps$ is $C^1$ with 
\[
\div(\vert \grad u_\eps\vert \grad u_\eps) = \langle \nabla |\nabla u_{\eps}|, \nabla u_{\eps}\rangle +|\nabla u_{\eps}|\Delta u_{\eps}
\]
almost everywhere. 
Thus for $t_1>t_2>0$, regular values of $u_{\eps}$, we can apply the divergence theorem to obtain 
\begin{align*}
    \int_{\{u_{\eps}=t_1^{-a}\}}|\nabla u_{\eps}|^2\, da_g&-\int_{\{u_\eps=t_2^{-a}\}}|\nabla u_{\eps}|^2\, da_g\\
    &=\int_{\{t_1^{-a}\leq u_{\eps}\leq t_2^{-a}\}}\div(|\nabla u_{\eps}|\nabla u_{\eps})\, d\mu\\
    &=\int_{\{t_1^{-a}\leq u_{\eps}\leq t_2^{-a}\}}\left(\langle \nabla |\nabla u_{\eps}|, \nabla u_{\eps}\rangle +|\nabla u_{\eps}|\Delta u_{\eps}\right)\, d\mu\\
    &=\int_{\{t_1^{-a}\leq u_{\eps}\leq t_2^{-a}\}}\left(1-\frac{(p-2)|\nabla u_{\eps}|^2}{|\nabla u_{\eps}|^2+\eps^2}\right)\langle \nabla|\nabla u_{\eps}|,\nabla u_{\eps}\rangle\, d\mu \\
    &=\int_{\{t_1^{-a}\leq u_{\eps}\leq t_2^{-a}\}}-aH|\nabla u_{\eps}|^2+\frac{2(p-2)\eps^2\langle \nabla|\nabla u_{\eps}|,\nabla u_{\eps}\rangle}{(p-1)\left(|\nabla u_{\eps}|^2+\eps^2\right)}\, d\mu.
\end{align*}
In the last equality, we used that 
\[
H=\frac{\Delta u_{\eps}}{|\nabla u_\eps|}-\frac{\langle \nabla|\nabla u_{\eps}|,\nabla u_{\eps}\rangle}{|\nabla u_{\eps}|^2}=-(p-2)\frac{\langle \nabla|\nabla u_{\eps}|,\nabla u_{\eps}\rangle}{|\nabla u_{\eps}|^2+\eps^2}-\frac{\langle \nabla |\nabla u_{\eps}|,\nabla u_{\eps}\rangle}{|\nabla u_{\eps}|^2}
\]
away from the measure zero set of critical points of $u_\eps$. 

For almost every $t>0$, we let
\[
P(t):=-at^{-1-a}\int_{\{u_{\eps}=t^{-a}\}}\left(1-\frac{(p-2)|\nabla u_{\eps}|^2}{|\nabla u_{\eps}|^2+\eps^2}\right)\langle \nabla |\nabla u_{\eps}|,\frac{\nabla u_{\eps}}{|\nabla u_{\eps}|}\rangle\, d\mu.
\]
From the previous calculations, the co-area formula, and the fact that $\{\nabla u_{\eps}=0\}$ has zero measure, for regular values $t_1>t_2>0$, we have
\begin{align*}
\int_{\{u_{\eps}=t_1^{-a}\}}|\nabla u_{\eps}|^2\, da_g-\int_{\{u_{\eps}=t_2^{-a}\}}|\nabla u_{\eps}|^2\, da_g=\int_{t_2}^{t_1}P(t)\, dt.
\end{align*}
Moreover, note that for any fixed  regular values $t_1<t_2$, we have that 
\[
\int_{t_2}^{t_1}|P(t)|\, dt<C\int_{\{t_1^{-a}\leq u_{\eps}<t_2^{-a}\}}|\nabla^2u_{\eps}|\, d\mu<\infty.
\]
This implies that 
\[
t\to\int_{\{u_{\eps}=t^{-a}\}}|\nabla u_{\eps}|^2\, da
\]
has an absolutely continuous representative. Therefore, we can regard this as absolutely continuous function of $t$.

Now, it is straightforward to compute
\begin{align*}
    \frac{d}{dt}\int_{\{u_{\eps}=t^{-a}\}}&|\nabla u_{\eps}|^2\, da=\\
    &-a^2t^{-a-1}\int_{\{u_{\eps}=t^{-a}\}}H|\nabla u_{\eps}|\, da+\frac{2(p-2)}{p-1}\int_{\{u_{\eps}=t^{-a}\}}\frac{\eps^2\langle \nabla|\nabla u_{\eps}|,\nabla u_{\eps}\rangle}{|\nabla u_{\eps}|\left(|\nabla u_{\eps}|^2+\eps^2\right)}\, da
\end{align*}
for almost every $t$. 
Applying integration by parts as before, we have
\begin{align*}
    D_{\eps}(r)=&r^{1-a}c_{\psi}+\int\varphi(r,u_{\eps})|\nabla u_{\eps}|^3\, d\mu+\int_0^r\psi(s/r)Q(r,s)\, ds,
\end{align*}
where 
\begin{align*}
Q(r,s)=&\frac{2(p-2)\eps^2}{(p-1)}\int_{r+s}^{2r+2s}a^{-2}t^{a-1}\int_{\{t^{-a}\leq u_{\eps}\leq (r+s)^{-a}\}}\frac{\langle \nabla|\nabla u_{\eps}|,\nabla u_{\eps}\rangle}{|\nabla u_{\eps}|^2+\eps^2}\, d\mu\, dt\\
    &-a^{-3}(2r+2s)^a\frac{2(p-2)\eps^2}{(p-1)}\int_{\{(2r+2s)^{-a}\leq u_{\eps}\leq (r+s)^{-a}\}}\frac{\langle \nabla |\nabla u_{\eps}|,\nabla u_{\eps}\rangle}{|\nabla u_{\eps}|^2+\eps^2}\, d\mu.
\end{align*}
The  elementary inequality
\[
\frac{\eps|\nabla u_{\eps}|}{|\nabla u_{\eps}|^2+\eps^2}\leq \frac{1}{2}
\]
implies that
\begin{align*}
\int_0^r\psi(s/r)Q(r,s)\, ds&\leq C(r)\left\vert\int_0^r\psi(s/r)\int_{\{(2r+2s)^{-a}\leq u_{\eps}\leq (r+s)^{-a}\}}\frac{2(p-2)\eps^2\langle \nabla |\nabla u_{\eps}|,\nabla u_{\eps}\rangle}{(p-1)(|\nabla u_{\eps}|^2+\eps^2)}\right\vert\\
&\leq \eps C(r)|M_{S,T}|\max_{M_{S,T}}|\nabla^2 u_{\eps}|\to 0,\quad \text{ as } \eps\to 0.
\end{align*}
The proof is now complete with the result of Lemma \ref{lem: appro reg}.
\end{proof}

Combined with the above results, we have now shown that $r\to r^a D(r)$ is non-decreasing.

\section{Expansion on Small Annuli}
\label{Section-Expansion}

Fix a number $2 < p < 3$. Define $\an = B(0,16) - B(0,1/2)$.  Fix some large, positive integer $k$.  Let $\Gamma^{k,\alpha}(\an)$ be the set of $C^{k,\alpha}$ Riemannian metrics on $\an$.  
We define a map 
\[
\mathcal U\colon \Gamma^{k,\alpha}(\an) \to C^{k,\alpha}(\an) 
\]
by letting $\mathcal U(g)$ be the unique $p$-harmonic function in the $g$ metric which is equal to $u_0(x) = \vert x\vert^{-a}$ on the boundary of $\an$. 

Next, let $\an(1) = B(0,4)-B(0,1) \subset \an$.  Define a functional $\mathcal D_1$ from a neighborhood of $(g_{\text{euc}},u_0)$ in $\Gamma^{k,\alpha}(\an)\times C^{k,\alpha}(\an)$ to $\R$ by 
\begin{gather*}
\mathcal D_1(g,u) = c_\psi + \int_{\an(1)} \varphi(1,u) \vert \grad^g u\vert^3\, dv_g,
\end{gather*}
where $\varphi$ is defined by \eqref{phi-equation}. 
Then define a functional $\mathcal Q_1$ from a neighborhood of $g_{\text{euc}}$ in $\Gamma^{k,\alpha}(\an)$ to $\R$ by setting $\mathcal Q_1(g) = \mathcal D_1(g,\mathcal U(g))$.

Now consider the following geometric situation. Fix a smooth metric $g$ on $M^3$ and fix a point $q\in M$. We introduce geodesic normal coordinates $x$ near $q$, and then rescale them to get metrics $g_r(x) = g(rx)$ on $\an$.  By the Taylor expansion of the metric in geodesic normal coordinates, we have 
\[
g_r = \delta_{ij} -\frac{r^2}{3} R_{ikj\ell}(q) x^k x^\ell + O(r^3),
\]
where $O(r^3)$ denotes a quantity whose $C^{k,\alpha}$ norm on $\an$ is at most $C r^3$. 
We would like to understand the behavior of $\mathcal Q_1(g_r)$ as $r\to 0$. 

\begin{prop}
We have $\mathcal Q_1(g_r) = b_1 r^2  R(q) + O(r^3)$ as $r\to 0$. Here $b_1 > 0$ is a fixed constant.
\end{prop}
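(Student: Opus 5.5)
The plan is to Taylor expand $\mathcal Q_1$ around the Euclidean metric, identify the first-order term as a rotation-invariant linear functional of the curvature tensor at $q$, and then fix the sign of the constant with a rotationally symmetric model.

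\textbf{Step 1: $\mathcal Q_1(g_{\text{euc}})=0$.} For $u_0=\vert x\vert^{-a}$ in the flat metric, on $\{u_0=t^{-a}\}=\{\vert x\vert=t\}$ we have $\vert\grad u_0\vert=a t^{-a-1}$ and $H=2/t$. Hence
\[
a^{-1}t^{1+a}\int H\vert\grad u_0\vert\,d\sigma=8\pi t, \qquad a^{-2}t^{1+2a}\int\vert\grad u_0\vert^2\,d\sigma=4\pi t .
\]
So $F\equiv 0$, hence $E\equiv 0$ and $D(1)=0$. Since $u_0$ has no critical points, the identity $D(1)=c_\psi+\int\varphi(1,u)\vert\grad u\vert^3$ from Section \ref{Section-Monotonicity} applies. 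Only the annulus $\an(1)$ matters because of the support of $\psi$. Therefore $\mathcal Q_1(g_{\text{euc}})=0$.

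\textbf{Step 2: smoothness of $\mathcal Q_1$.} Since $\vert\grad u_0\vert\ge c>0$ on $\an$, the $p$-Laplacian is uniformly elliptic and smooth in a $C^{1}$-neighborhood of $u_0$. The implicit function theorem, applied to $(g,u)\mapsto \div_g(\vert\grad^g u\vert^{p-2}\grad^g u)$ with the Dirichlet condition $u=u_0$ on $\bd\an$, then shows that $\mathcal U$ is a $C^2$ (indeed smooth) map near $g_{\text{euc}}$ into $C^{k,\alpha}$. The linearization is the nondegenerate operator $L_0 v=\div(\vert\grad u_0\vert^{p-2}(I+(p-2)\nu\otimes\nu)\grad v)$ with zero boundary data, which is invertible by the maximum principle and Schauder theory. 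The functional $\mathcal D_1$ is clearly smooth in $(g,u)$ there, since $u$ stays bounded away from $0$ and $\vert\grad u\vert^3$ is smooth when $\grad u\neq 0$. Hence $\mathcal Q_1$ is $C^2$ near $g_{\text{euc}}$.

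Writing $g_r=\delta+r^2h+O(r^3)$ with $h_{ij}=-\tfrac13R_{ikj\ell}(q)x^kx^\ell$, Taylor's theorem gives
\[
\mathcal Q_1(g_r)=r^2\,d\mathcal Q_1|_{\delta}[h]+O(r^3).
\]

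\textbf{Step 3: invariance forces a multiple of $R(q)$.} The map $\mathrm{Rm}(q)\mapsto d\mathcal Q_1|_\delta[h]$ is linear on algebraic curvature tensors. For $A\in O(3)$ we have $\mathcal Q_1(A^*g)=\mathcal Q_1(g)$, because $\an$, the boundary data $u_0$ and $\varphi(1,\cdot)$ are rotation invariant, and $\mathcal U(A^*g)=A^*\mathcal U(g)$ by uniqueness. So the linear functional is $O(3)$-invariant. In dimension three the Riemann tensor is determined linearly by the Ricci tensor. The only $O(3)$-invariant linear functionals on symmetric $2$-tensors are multiples of the trace, so $d\mathcal Q_1|_\delta[h]=b_1R(q)$.

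\textbf{Step 4: $b_1>0$ (main obstacle).} It remains to show $b_1$ is positive, and nonzero in particular. I would test on the space-form metrics $g_\kappa=(1+\kappa\vert x\vert^2/4)^{-2}\delta$, which satisfy $R=6\kappa+O(\kappa^2)$. For these, $u=\mathcal U(g_\kappa)$ is radial and solves an explicit ODE. Expanding $u=u_0+\kappa v+O(\kappa^2)$, the correction $v$ solves an explicit radial linear ODE with zero boundary values, which is solvable in closed form in powers of $\vert x\vert$. I would then compute $\tfrac{d}{d\kappa}\big|_0$ of $c_\psi+\int\varphi(1,u)\vert\grad u\vert^3\,dv_{g_\kappa}$.

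A cleaner route is to differentiate $F$ directly. In the radial case $u$ has no critical points, so $D(1)=\int_0^1\psi(s)\int_{1+s}^{2+2s}F(t)t^{-a-2}\,dt\,ds$. The sign of $b_1$ then reduces to the sign of $\partial_\kappa F(t)$ at $\kappa=0$. Heuristically $\partial_\kappa F$ is a positive multiple of $R$: in the \cite{agostiniani2022riemannian} monotonicity formula, $F'$ contains a term of the form $\int R\,\vert\grad u\vert^{\cdot}$ (up to positive factors) plus squares that vanish to second order at the model. It is also possible that the sign convention makes the computed constant come out negative. In that case the statement should be read with that sign, consistent with the introduction, where $D_p(1)-2^aD_p(2)$ has sign opposite to $R_g(y)$. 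The explicit radial computation should settle which sign is correct.
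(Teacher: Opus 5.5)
Your Steps 1--3 follow the paper's proof: the implicit function theorem for $\mathcal U$, $\mathcal Q_1(g_{\text{euc}})=0$, and $O(3)$-invariance forcing $d\mathcal Q_1|_\delta[h]=b_1R(q)$. Your computation $F\equiv 0$ for $u_0$ actually justifies $\mathcal Q_1(g_{\text{euc}})=0$, which the paper only asserts.

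\textbf{The gap is Step 4.} The positivity of $b_1$ is part of the statement, and the subsequent argument relies on it ($b_2>b_1$ is deduced from $b_1>0$). You neither compute $b_1$ nor give a valid reason for its sign, and you explicitly allow it to be negative. The sign is not a matter of convention, since $F$, $D$ and $\mathcal Q_1$ are completely specified. A negative $b_1$ would also not be ``consistent with the introduction'': the sign claim there is about $b_1-b_2$, and the paper derives it from $b_1>0$. Your heuristic does not reach the sign either. The monotonicity formula of \cite{agostiniani2022riemannian} controls $\bd_t F$, so at best it shows that $t\mapsto \bd_\kappa F(t)|_{\kappa=0}$ is nondecreasing. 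But for $g$ near $g_{\text{euc}}$ one has $\mathcal Q_1=\int_0^1\psi(s)\int_{1+s}^{2+2s}F(t)t^{-a-2}\,dt\,ds$, which depends on the sign of $F$ itself on $[1,4]$.

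\textbf{A smaller problem is your test metric.} Step 3 identifies $d\mathcal Q_1|_\delta[h]$ with $b_1R(q)$ only for the normal-coordinate perturbation $h_{ij}=-\frac13R_{ikj\ell}x^kx^\ell$. Moreover, $\mathcal Q_1$ is not diffeomorphism invariant, because $\an$ and the boundary data $\vert x\vert^{-a}$ are fixed in coordinates. The first-order part $-\frac{\kappa}{2}\vert x\vert^2\delta$ of $(1+\kappa\vert x\vert^2/4)^{-2}\delta$ differs from $-\frac{\kappa}{3}(\vert x\vert^2\delta-x\otimes x)$ by $\mathcal L_X\delta$ with $X=\frac{\kappa}{12}\vert x\vert^2x$, and this field moves $\bd\an$. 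So you would have to show that $d\mathcal Q_1|_\delta$ kills such terms. It does: writing $\vert\grad u\vert=at^{-1-a}w$ on $\{u=t^{-a}\}$, one has $F=t^{-1}\int(w-\tfrac t2H)^2+t(4\pi-\tfrac14\int H^2)$, which for flat metrics vanishes to second order by Willmore's inequality.

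\textbf{How to close the gap.} It is simpler to work in geodesic polar coordinates $d\rho^2+s_\kappa(\rho)^2dS^2$, as the paper does, and there your ``cleaner route'' settles the sign immediately. With $\vert u_\kappa'\vert=C(\kappa)s_\kappa^{-1-a}$, $\{u_\kappa=t^{-a}\}=\{\rho=\tau\}$ and $H=2s_\kappa'/s_\kappa$, one finds exactly
\[
F(t)=4\pi t\left[(1-X)^2+2X\bigl(1-s_\kappa'(\tau)\bigr)\right],\qquad X=a^{-1}C(\kappa)\,t^a s_\kappa(\tau)^{-a}.
\]
Since $X=1+O(\kappa)$, $\tau=t+O(\kappa)$ and $1-s_\kappa'(\tau)=\tfrac12\kappa\tau^2+O(\kappa^2)$, this gives $F(t)=4\pi\kappa t^3+O(\kappa^2)$, regardless of how the boundary data determine $C(\kappa)$. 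Hence $6b_1=4\pi\int_0^1\psi(s)\int_{1+s}^{2+2s}t^{1-a}\,dt\,ds>0$ for a nonnegative bump $\psi$. This replaces the paper's explicit expansion of $C(\kappa)$ and $u_\kappa$ and its integration by parts against $\mathcal M$. The boundary normalization enters only through the square $(1-X)^2$, so it drops out at first order.
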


\begin{proof}
    First, we want to prove that 
   $
   g\mapsto \mathcal U(g)
   $
   is a smooth functional on a small neighborhood of $g_{euc}$.
   Let $\mathcal E=C^{k,\alpha}(\overline{\an};\text{Sym}^2 T^*\an)$, $X=\{v\in C^{k,\alpha}(\bar{\an}): v|_{\partial\an}=0\}$, and $Y=C^{k-2,\alpha}(\overline{\an})$. Consider the functional
   \[
   \Phi: \mathcal O\subset \mathcal E\times X\to Y,\quad \Phi(h,v):=\Delta_{p,g_h}(u_0+v),
   \]
   where $\mathcal O$ is a sufficiently small neighborhood of $(0,0)$ and $g_h=g_{\text{euc}}+h$.

   Since $|\nabla ^{g_{\text{euc}}}u_0|>c_0$ for any $x\in\an$, after shrinking $\mathcal O$ small enough, we have that 
   \[
   |\nabla^{g_h}(u_0+v)|>\frac{c_0}{2},\quad \forall (h,v)\in\mathcal O.
   \]
   Therefore, from the explicit formula 
   \[
\Phi(h,v)=\frac{1}{\sqrt{\det g_h}}\partial_i\left(\sqrt{\det g_h}\left(g_h^{ml}\partial_mu_v\partial_l u_v\right)^{\frac{p-2}{2}}g_h^{ij}\partial_j u_v\right)
   \]
   we know $\Phi: \mathcal O\to Y$ is a smooth functional, where $u_v:=u_0+v$. 
   It is straightforward to compute that 
   \[
   Tu:=D_v\Phi(0,0)[w]=\div\left(|\nabla u_0|^{p-2}\left(I+\frac{\nabla u_0}{|\nabla u_0|}\tensor \frac{\nabla u_0}{|\nabla u_0|}\right)\nabla w\right),
   \]
   where all the gradients are with respect to the Euclidean metric.
   Since $|\nabla u_0|>c_0$, we know that $T$ is a uniformly elliptic operator and hence $T: X\to Y$ is an isomorphism. By the implicit function theorem, it then follows that 
   \[
   g\mapsto \mathcal U(g)
   \]
   is indeed a smooth functional in a small neighborhood of the Euclidean metric.

   Since $|\nabla^{g_\text{euc}} u_0|>c_0$ for any $x\in \an$, it is straightforward to check that $\mathcal D_1$ is a smooth functional in a small neighborhood of $(g_{\text{euc}},u_0)$. Therefore, we see that $\mathcal Q_1(g):=\mathcal D_1(g,\mathcal U(g))$ is a smooth functional in a small neighborhood of $g_{\text{euc}}$.
   Therefore, we have a local expansion of $\mathcal Q_1$ near $g_{\text{euc}}$ of the form 
   \[
   \mathcal Q_1(g_h)=\mathcal Q_1(g_{\text{euc}})+L(h)+O(\|h\|_{C^{k,\alpha}}^{{2}}).
   \]
Recall that the metric $g_r$ has the local expansion of 
   \[
   g_r=\delta_{ij}-\frac{r^2}{3}R_{ijkl}(q)x^kx^l+O(r^3),
   \]
   and that $\mathcal Q_1(g_{\text{euc}})=0$. 
   Thus, by the rotational symmetry of the problem, we have that 
   \[
   \mathcal Q_1(g_r)=b_1r^2R(q)+O(r^3),
   \]
    for some fixed constant $b_1$ that does not depend on $g$.  Indeed, every linear map from the space of algebraic curvature tensors to $\R$ which is $\operatorname{SO}(3)$ invariant must be a multiple of the scalar curvature; see for example \cite[Section 1G]{besse1987einstein}.

   Next, we need to verify that $b_1$ is indeed a positive constant. Consider the round metric $g^*$ on the unit sphere $S^3$. It has the form
   \[
   g^*=d\rho^2+\sin^2\rho\, dS^2,
   \]
   where $dS^2$ is the standard metric on the unit sphere $S^2$. 
   The rescaled metrics for $g^*$ are given by
   \[
   g^*_r=d\rho^2+\frac{\sin^2 r\rho}{r^2}\, dS^2,
   \]
   which is exactly the formula for the rotational symmetric metric $g_{\kappa}$ with sectional curvature $\kappa=r^2$. In the Cartesian normal coordinates, we have the expression 
   \[
   (g_{\kappa})_{ij}=\delta_{ij}-\frac{\kappa}{3}(|x|^2\delta_{ij}-x_ix_j)+O(\kappa^2).
   \]
   By the expansion of $\mathcal Q_1$ we computed earlier, we have that 
   \[
   \mathcal Q_1(g_{\kappa})=6b_1 \kappa+O({\kappa^{3/2}}).
   \]
   It then follows that 
   \[
   6b_1=\frac{d}{d\kappa}\mathcal Q_1(g_{\kappa})\vert_{\kappa=0}.
   \]
   Let $u_{\kappa}=\mathcal U(g_{\kappa})$. Since $g_{\kappa}$ is rotationally symmetric, $u_{\kappa}$ must be a radial function, and the $p$-harmonic equation reduces to the ODE
   \[
   \frac{d}{d\rho}\left[s_{\kappa}(\rho)^2|u_{\kappa}'|^{p-2}u_{\kappa}'\right]=0,
   \]
   where $s_{\kappa}$ is the solution to the ODE
   \[
   s_{\kappa}''+\kappa s_{\kappa}=0, \quad s_{\kappa}(0)=0, \quad s_{\kappa}'(0)=1.
   \]

   Note that if $u_{\kappa}'$ achieves zero, then it must be identically zero, which is a contradiction. Therefore, we conclude that $u_{\kappa}'$ must have a sign. Since $u_{\kappa}(\frac{1}{2})>u_{\kappa}(16)$ by the boundary condition, we know that 
   \[
   u_{\kappa}'<0,\quad \forall x\in \an.
   \]
One then obtains the explicit formula of $u_{\kappa}$ via direct integration. This yields
    \[
    u_{\kappa}(\rho)=2^a-C(\kappa)\int_{\frac{1}{2}}^\rho s_{\kappa}^{\frac{-2}{p-1}}\, d\rho,
    \]
    where $C(\kappa)$ is positive function of $\kappa$ solving the following equation
    \[
    16^{-a}=2^a-C(\kappa)\int_{\frac{1}{2}}^{16} s_{\kappa}(\rho)^{\frac{-2}{p-1}}\, d\rho.
    \]
    The Taylor expansion of sine  near $0$ gives 
    \[
    s_{\kappa}(x)=\rho-\frac{\kappa}{6}\rho^3+O(\kappa^2),
    \]
    and we also have the expansion  
    \[
    C(\kappa)=a-\frac{\kappa a}{3}\mu+O(\kappa^2),
    \]
    where 
    \[
    \mu=\frac{a(a+1)}{2(2-a)}\cdot\frac{2^{4(2-a)}-2^{a-2}}{2^a-2^{-4a}}>0
    \]
    is a constant depending on $a$. 

    By definition, we have that 
    \[
    \mathcal Q_1(g_{\kappa})=c_{\psi}+4\pi C(\kappa)^3\int_{1}^4\mathcal M(u_{\kappa}(\rho))s_{\kappa}(\rho)^{2-3\beta}\, d\rho,
    \]
    where 
    \[
    \mathcal M(t)=\varphi(1,t),\quad \beta=\frac{2}{p-1},
    \]
    and we used that 
    \[
    dv_{g_{\kappa}}=s_{\kappa}(\rho)^2d\rho \, d\omega,
    \]
    where $d\omega$ is the standard area form of unit $S^2$.

    By  Taylor expansion, we have
    \[
    u_{\kappa}(\rho)=\rho^{-a}+v(\rho)\kappa+O(\kappa^2),
    \]
    and 
    \[
    \mathcal M(u_{\kappa})=\mathcal M(\rho^{-a})+\mathcal M'(\rho^{-a})v(\rho)\kappa+O(\kappa^2),
    \]
    where
    \[
    v(\rho)=\frac{2^a-\rho^{-a}}{3}\mu-\frac{a(a+1)}{6(2-a)}\cdot(\rho^{2-a}-2^{a-2}).
    \]
    Plugging in the expansion into the expression of $\mathcal Q_1(g_{\kappa})$, we have 
    \begin{align*}
        \mathcal Q_1(g_{\kappa})=&c_{\psi}+4\pi\int_1^4a^3\rho^{2-3\beta}\left[\mathcal M(\rho^{-a})+\left(\frac{3\beta-2}{6}\rho^2\mathcal M(\rho^{-a})+\mathcal M'(\rho^{-a})v(\rho)\right)\kappa\right]\, d\rho\\
        &-4\pi\mu\int_1^4a^3\rho^{2-3\beta}\mathcal M(\rho^{-a})\kappa\, d\rho+     O(\kappa^2).
    \end{align*}
Therefore, we have 
    \begin{align*}
    \frac{d}{d\kappa}\mathcal Q_1(g_{\kappa})\vert_{\kappa=0}=&4\pi\int_1^{4}a^3\rho^{2-3\beta}\left(\frac{3\beta-2}{6}\rho^2\mathcal M(\rho^{-a})+\mathcal M'(\rho^{-a})v(\rho)\right)\, d\rho\\
    &-4\pi\mu\int_1^4a^3\rho^{2-3\beta}\mathcal M(\rho^{-a})\, d\rho\\
    =&4\pi a^3\int_1^4\frac{3a+1}{6}\rho^{1-3a}\mathcal M(\rho^{-a})+\frac{d}{d\rho}\left(-\frac{1}{a}\mathcal M(\rho^{-a})\right)\rho^{-2a}v(\rho)\, d\rho\\
    &-4\pi\mu a^3\int_1^4\rho^{-1-3a}\mathcal M(\rho^{-a})\, d\rho 
    \\
    =&4\pi a^3\int_1^4\frac{3a+1}{6}\rho^{1-3a}\mathcal M(\rho^{-a})+\frac{1}{a}\mathcal M(\rho^{-a})\frac{d}{d\rho}\left(\rho^{-2a}v(\rho)\right)\\
    &-4\pi\mu a^3\int_1^4\rho^{-1-3a}\mathcal M(\rho^{-a})\, d\rho. 
    \end{align*}
    The last inequality follows from integration by parts and the fact that
    $
    \mathcal M(4^{-a})=\mathcal M(1^{-a})=0.
    $

    Next, we directly compute the term 
\begin{align*}
    \frac{1}{a}\mathcal M(\rho^{-a})\frac{d}{d\rho}\left(\rho^{-2a}v(\rho)\right)=&\frac{1}{a}\mathcal M(\rho^{-a})\left(-2a\rho^{-2a-1}v(\rho)+\rho^{-2a}v'(\rho)\right).
\end{align*}
Since we have 
\[
v(\rho)=\frac{2^a-\rho^{-a}}{3}\mu-\frac{a(a+1)}{6(2-a)}(\rho^{2-a}-2^{a-2}),
\]
direct computation gives
\[
v'(\rho)=\frac{a}{3}\mu\rho^{-a-1}-\frac{a(a+1)}{6}\rho^{1-a}.
\]
Plugging in the above expressions, we have 
\begin{align*}
    \frac{1}{a}\mathcal M(\rho^{-a})\frac{d}{d\rho}\left(\rho^{-2a}v(\rho)\right)=&\frac{1}{a}\mathcal M(\rho^{-a})\left(-2a\rho^{-2a-1}v(\rho)+\rho^{-2a}v'(\rho)\right)\\
    =&\frac{1}{a}\mathcal M(\rho^{-a})\left(\frac{2a}{3}\mu \rho^{-3a-1}+\frac{a^2(a+1)}{6(2-a)}\rho^{1-3a}+L(a)\rho^{-2a-1}\right)\\
    &+\frac{1}{a}\mathcal M(\rho^{-a})\left(\frac{a}{3}\mu\rho^{-3a-1}-\frac{a(a+1)}{6}\rho^{1-3a}\right)\\
    =&\mathcal M(\rho^{-a})\mu\rho^{-3a-1}+\frac{a^2-1}{3(2-a)}\mathcal M(\rho^{-a})\rho^{1-3a}+L(a)\mathcal M(\rho^{-a})\rho^{-2a-1},
\end{align*}
where $L(a)$ is a constant depending only on the value of $a$.

Now note that 
\begin{align*}
        \int_1^4\rho^{-2a-1}\mathcal M(\rho^{-a})\, d\rho=&\int_1^4 \frac{1}{a^4}\left[\frac{1}{2}\psi(\frac{\rho}{2}-1)-\psi(\rho-1)\right]\, d\rho\\
        =&\frac{1}{a^4}\int_1^2\psi(\rho-1)\, d\rho-\frac{1}{a^4}\int_1^4\psi(\rho-1)\, d\rho = 0.
    \end{align*}
Thus we have 
\begin{align*}
    \int_1^4\frac{1}{a}\mathcal M(\rho^{-a})\frac{d}{d\rho}(\rho^{-2a}v(\rho))\, d\rho=\int_1^4\mathcal M(\rho^{-a})\mu\rho^{-3a-1}+\frac{a^2-1}{3(2-a)}\mathcal M(\rho^{-a})\rho^{1-3a}\, d\rho.
\end{align*}
It follows that   \begin{align*}
    \frac{d}{d\kappa}\mathcal Q_1(g_{\kappa})\vert_{\kappa=0}=&4\pi a^3\int_1^4\frac{5a-a^2}{6(2-a)}\mathcal M(\rho^{-a})\rho^{1-3a}\, d\rho 
    \\
    =&\frac{2\pi(5-a)}{3(2-a)}\int_1^4\rho^{2-a}\left[\frac{1}{2}\psi(\frac{\rho}{2}-1)-\psi(\rho-1)\right]\, d\rho\\
    =&\frac{2\pi(5-a)}{3(2-a)}\int_1^2 2^{2-a}\rho^{2-a}\psi(\rho-1)\, d\rho-\frac{2\pi(5-a)}{3(2-a)}\int_1^4\rho^{2-a}\psi(\rho-1)\, d\rho.
    \end{align*}  
   Finally, since $\frac{5-a}{2-a}>0$ and $\psi$ is a positive bump function supported on $(0,1)$, we obtain that $\frac{d}{d\kappa}\mathcal Q_1(g_{\kappa})\vert_{\kappa=0}$ is positive and  the proof is complete.
   \end{proof}

Likewise, let $\an(2) = B(0,8)-B(0,2)$ and define 
\[
\mathcal D_2(g,u) = 2^{1-a}c_\psi + \int_{\an(2)} \varphi(2,u) \vert \grad^g u\vert^3\, dv_g
\]
and $\mathcal Q_2(g) = \ 2^a \mathcal D_2(g,\mathcal U(g))$. 

\begin{prop}
    We have $\mathcal Q_2(g_r) = b_2 r^2 R(q) + O(r^3)$ as $r \to 0$ for a constant $b_2 > b_1$. 
\end{prop}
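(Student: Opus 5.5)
The proof will follow the one for $\mathcal Q_1$ closely, and the comparison $b_2>b_1$ will come out of a scaling substitution in the final integral. First, since $|\grad^{g_{\text{euc}}} u_0|>c_0$ on all of $\an\supset \an(2)$, the functional $\mathcal D_2$ is smooth near $(g_{\text{euc}},u_0)$. Composing with the smooth map $g\mapsto \mathcal U(g)$ obtained there from the implicit function theorem, $\mathcal Q_2$ is smooth near $g_{\text{euc}}$. Next I check that $\mathcal Q_2(g_{\text{euc}})=0$. This holds because $2^a\mathcal D_2(g_{\text{euc}},u_0)=2^aD(2)$ for the model function $u_0$, and $r^aD(r)$ is constant for $u_0$ since $F\equiv 0$ there; this is the same computation that gave $\mathcal Q_1(g_{\text{euc}})=0$. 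Expanding to first order in $h=g_r-g_{\text{euc}}=-\tfrac{r^2}{3}R_{ikj\ell}(q)x^kx^\ell+O(r^3)$, the linear term is an $\operatorname{SO}(3)$-invariant linear functional of the curvature tensor. The data are rotationally symmetric: $\an$, $u_0$, and the radial weight $\varphi(2,\cdot)$. Hence, by \cite[Section 1G]{besse1987einstein}, this term equals $b_2 r^2R(q)$, giving $\mathcal Q_2(g_r)=b_2r^2R(q)+O(r^3)$.

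To identify $b_2$, I will test on the space forms $g_\kappa$ as before, so that $6b_2=\frac{d}{d\kappa}\mathcal Q_2(g_\kappa)|_{\kappa=0}$. The radial solution $u_\kappa$, the constant $C(\kappa)$, the correction $v(\rho)$ and the constant $\mu$ are exactly the same as before, because they depend only on $\an$ and the boundary data, not on the choice of integration annulus. I then repeat the computation with $\mathcal M_2(t)=\varphi(2,t)$ and the range $\int_2^8$ in place of $\int_1^4$. The needed ingredients carry over. First, $\mathcal M_2(2^{-a})=\mathcal M_2(8^{-a})=0$, which justifies the integration by parts. Second, $\rho^{-2a-1}\mathcal M_2(\rho^{-a})=a^{-4}\big[\tfrac12\psi(\tfrac{\rho}{4}-1)-\psi(\tfrac{\rho}{2}-1)\big]$ has zero integral over $[2,8]$, since the substitutions $\rho=4\sigma$ and $\rho=2\sigma$ each produce $2\int_0^1\psi$. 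Third, the $\mu$-terms cancel exactly as before. This should give
\[
\frac{d}{d\kappa}\mathcal Q_2(g_\kappa)\Big|_{\kappa=0}=2^a\cdot\frac{2\pi(5-a)}{3(2-a)}\int_2^8\rho^{2-a}\Big[\tfrac12\psi\big(\tfrac{\rho}{4}-1\big)-\psi\big(\tfrac{\rho}{2}-1\big)\Big]\,d\rho .
\]

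Finally, I substitute $\rho=2\sigma$. This turns the integral into $2^{3-a}$ times the corresponding integral over $[1,4]$ that appeared for $\mathcal Q_1$. Hence $\frac{d}{d\kappa}\mathcal Q_2(g_\kappa)|_{\kappa=0}=2^a\cdot 2^{3-a}\cdot\frac{d}{d\kappa}\mathcal Q_1(g_\kappa)|_{\kappa=0}$, so $b_2=8b_1$. Since $b_1>0$ by the previous proposition, it follows that $b_2>b_1$.

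The main obstacle is bookkeeping. I must check that no step of the $\mathcal Q_1$ computation secretly used the specific interval $[1,4]$ beyond the vanishing of $\mathcal M$ at the endpoints and the zero-mean identity. In particular, the coefficient $\frac{3\beta-2}{6}\rho^2$ from expanding $s_\kappa^{2-3\beta}$ and the $\rho^{1-3a}$ terms from $v'$ must combine into the same constant $\frac{5a-a^2}{6(2-a)}$. I expect this to hold, because all of these pointwise identities in $\rho$ were derived independently of the integration range.
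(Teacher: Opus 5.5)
Your proposal is correct and follows essentially the same route as the paper. That route is: smoothness of $\mathcal Q_2$ via the implicit function theorem, then $\operatorname{SO}(3)$-invariance, then the space-form computation with $\varphi(2,\cdot)$ on $[2,8]$, rescaled to compare with the $\mathcal Q_1$ integral.

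The one real difference is the constant. The paper asserts $b_2=2^{1-a}b_1$, but its displayed computation omits the prefactor $2^a$ of $\mathcal Q_2=2^a\mathcal D_2$. It also loses the factor $4$ in $\rho^2=4(\rho/2)^2$. Your value $b_2=8b_1$ is the correct one.

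You can confirm it independently. For $g_\kappa$ one has $F(t)=4\pi t\big[(1-X)^2+2X(1-s_\kappa')\big]$, where $X=C(\kappa)t^a/(a\,s_\kappa^a)=1+O(\kappa)$ and $s_\kappa, s_\kappa'$ are evaluated on $\{u_\kappa=t^{-a}\}$. Hence $F(t)=4\pi\kappa t^3+O(\kappa^2)$, so to first order $r^aD(r)$ is $\kappa$ times a fixed multiple of $r^3$.

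The common coefficient you borrowed, $\frac{5a-a^2}{6(2-a)}$, is really $\frac{a}{2-a}$; the paper's $\frac{a^2(a+1)}{6(2-a)}$ should be $\frac{a^2(a+1)}{3(2-a)}$. This is harmless, since your argument only uses that the coefficient is the same for both annuli. Either way $b_2>b_1$.
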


\begin{proof}
    Similar arguments show that 
    \[
    \mathcal Q_2(g_r)=b_2r^2R(q)+O(r^3).
    \]
    We only need to compute the constant $b_2$ and compare it with $b_1$.
    Plugging in the same test metric family $g_{\kappa}$, we have  
    \[
    6b_2=\frac{d}{d\kappa}\mathcal Q_2(g_{\kappa})\vert_{\kappa=0}.
    \]
    By definition of $\mathcal Q_2(g_{\kappa})$, we have
    \begin{align*}
    &\mathcal Q_2(g_{\kappa})\\
    =&2^{1-a}c_{\psi}+4\pi C(\kappa)^3\int_2^8 2^{2a+1}\mathcal M(2^au_{\kappa}(\rho))s_{\kappa}(\rho)^{2-3\beta}\, d\rho\\
    =&2^{1-a}c_{\psi}+4\pi a^3\int_2^82^{-a}(\frac{\rho}{2})^{2-3\beta}\left[\mathcal M((\frac{\rho}{2})^{-a})+\left(\frac{3\beta-2}{6}\rho^2\mathcal M((\frac{\rho}{2})^{-a})+\mathcal M'((\frac{\rho}{2})^{-a})2^av(\rho)\right)\kappa\right]\, d\rho\\
    &-4\pi a^3\mu \int_2^8 2^{-a}(\frac{\rho}{2})^{2-3\beta}\mathcal M((\frac{\rho}{2})^{-a})\kappa\, d\rho+ O(\kappa^2).
    \end{align*}
The same arguments as above show that 
\begin{align*}
    \frac{d}{d\kappa}\mathcal Q_2(g_{\kappa})\vert_{\kappa=0}=&4\pi a^3\int_2^82^{-a}(\frac{\rho}{2})^{2-3\beta}\left(\frac{3\beta-2}{6}\rho^2\mathcal M((\frac{\rho}{2})^{-a})+\mathcal M'((\frac{\rho}{2})^{-a})2^av(\rho)\right)\, d\rho\\
    &-4\pi a^3\mu\int_2^82^{-a}(\frac{\rho}{2})^{2-3\beta}\mathcal M((\frac{\rho}{2})^{-a})\, d\rho\\
    =&4\pi a^3\int_2^8 2^{-a}(\frac{\rho}{2})^{-3a+1}\frac{3a+1}{6}\mathcal M((\frac{\rho}{2})^{-a})+\frac{d}{d\rho}\left(-\frac{1}{a}\mathcal M((\frac{\rho}{2})^{-a})\right)2^{2a+1}\rho^{-2a}v(\rho)\, d\rho\\
    &-4\pi a^3\mu\int_2^8 2^{-a}(\frac{\rho}{2})^{-3a-1}\mathcal M((\frac{\rho}{2})^{-a})\, d\rho\\
    =&4\pi a^3\int_2^8 \frac{a}{2-a}2^{-a}(\frac{\rho}{2})^{-3a-1}\mathcal M((\frac{\rho}{2})^{-a})\, d\rho\\
    =&2^{1-a}\cdot 4\pi a^3\int_1^4\frac{a}{2-a}\rho^{-3a-1}\mathcal M(\rho^{-a})\, d\rho.
\end{align*}
Therefore, we conclude 
\[
b_2=2^{1-a}b_1>b_1,
\]
since 
\[
1-a=2\cdot\frac{p-2}{p-1}>0.
\]
This completes the proof. 
\end{proof}

\section{Improvement of Integrability}

Fix $2 \le p \le 3$ and define $a = \frac{3-p}{p-1}$.  Let $g$ be a smooth metric on 
\[
\an = B(0,16) - B(0,1/2)
\]
 which is $K$-bi-Lipschitz to the Euclidean metric.  Assume that $u$ is a $p$-harmonic function on $\an$ in the $g$-metric.  We want to show there is a uniform improvement of integrability for the gradient.  This result is well-known, but we include the proof for the reader's convenience since it is important to keep careful track of the dependence of certain constants. 

The fact that $g$ is $K$-bi-Lipschitz to the Euclidean metric implies that 
\[
K^{-2}\vert \xi\vert^2 \le g^{ij} \xi_i\xi_j \le K^2 \vert \xi\vert^2
\]
for all vectors $\xi \in \R^3$. 
The $p$-harmonic equation implies that 
\[
\int_{\an} (g^{k\ell} u_k u_\ell)^{\frac{p-2}{2}} g^{ij} u_i \eta_j \sqrt{\det g} \, dx = 0
\]
for any smooth function $\eta$ compactly supported in $\an$.  We re-write this as 
\[
\int_{\an} A(x,\grad u) \cdot \grad \eta \, dx = 0 
\]
where $A$ is given by 
\[
A(x,\xi) =  \sqrt{\det g} (g^{k\ell} \xi_k \xi_\ell)^{\frac{p-2}{2}} g^{ij} \xi_i \bd_j
\]
and the dot product and gradient denote the Euclidean dot product and gradient.  The fact that $g$ is $K$-bi-Lipschitz to the Euclidean metric  implies that 
\begin{gather*}
\vert A(x,\xi)\vert \le K^{3+p} \vert \xi \vert^{p-1} \le K^6 \vert \xi\vert^{p-1},\\
A(x,\xi)\cdot \xi \ge  K^{-3-p} \vert \xi \vert^{p}\ge K^{-6} \vert \xi\vert^p.
\end{gather*}
We now proceed to derive the improvement of integrability.

The first step is to prove a Caccioppoli type inequality.  Let $Q_r$ denote a family of concentric cubes with side length $r$.

\begin{prop}
For any concentric cubes $Q_r\subset Q_{2r}\subset\subset \an$ there is an estimate 
\[
\int_{Q_r} \vert \grad u\vert^p\, dx \le \frac{C}{r^p} \int_{Q_{2r}} \vert u- u_{Q_{2r}}\vert^p\, dx
\]
where $u_{Q_{2r}}$ is the average of $u$ on $Q_{2r}$ and $C = C(K)$ depends only on $K$. 
\end{prop}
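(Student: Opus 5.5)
We follow the standard Caccioppoli argument, using only the two structural bounds $|A(x,\xi)|\le K^6|\xi|^{p-1}$ and $A(x,\xi)\cdot\xi\ge K^{-6}|\xi|^p$ derived above, and tracking constants so that they depend only on $K$ (using $2\le p\le 3$).

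Fix a cutoff $\eta\in C_c^\infty(Q_{2r})$ with $0\le\eta\le1$, $\eta\equiv1$ on $Q_r$, and $|\grad\eta|\le 4/r$. Write $\lambda=u_{Q_{2r}}$. Since $u$ is $p$-harmonic, the weak formulation holds for test functions in $W^{1,p}_0$. We test it with $\phi=\eta^p(u-\lambda)$, which is in $W^{1,p}_0(Q_{2r})$ because $u\in W^{1,p}_{\mathrm{loc}}$ (indeed $u$ is $C^{1,\beta}$ in the interior). This gives
\[
\int \eta^p A(x,\grad u)\cdot\grad u\,dx=-p\int \eta^{p-1}(u-\lambda)A(x,\grad u)\cdot\grad\eta\,dx .
\]
By coercivity, the left side is at least $K^{-6}\int\eta^p|\grad u|^p$. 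By the growth bound, the right side is at most $pK^6\int (\eta|\grad u|)^{p-1}\,|u-\lambda|\,|\grad\eta|$.

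We then apply Young's inequality with exponents $p/(p-1)$ and $p$:
\[
pK^6 (\eta|\grad u|)^{p-1}|u-\lambda||\grad\eta|\le \tfrac12 K^{-6}\eta^p|\grad u|^p + C(K,p)|u-\lambda|^p|\grad\eta|^p .
\]
We absorb the first term into the left side. Explicitly, $C(K,p)$ comes from $\epsilon$-Young with $\epsilon\sim K^{-12}$, so $C(K,p)\lesssim p^p K^{12(p-1)+6}$. Because $p\in[2,3]$, this is bounded by $C(K)$, for instance $27K^{30}$ up to absolute factors. Combining this with $|\grad\eta|^p\le 4^p r^{-p}\le 64\,r^{-p}$ and $\eta=1$ on $Q_r$ yields the claimed estimate, $\int_{Q_r}|\grad u|^p\le C(K)r^{-p}\int_{Q_{2r}}|u-u_{Q_{2r}}|^p$.

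There is no real obstacle. The only points needing care are the admissibility of the test function, which follows from the regularity of $u$ and a density argument, and checking that every constant is uniform in $p\in[2,3]$, which is why the structural bounds were stated with $K^6$ rather than $K^{3+p}$.
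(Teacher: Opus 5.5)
Your proposal is correct and matches the paper's proof: both test the weak equation with $\eta^p(u-u_{Q_{2r}})$ for a cutoff with $|\grad\eta|\lesssim 1/r$, apply the structural bounds $|A(x,\xi)|\le K^6|\xi|^{p-1}$ and $A(x,\xi)\cdot\xi\ge K^{-6}|\xi|^p$, and absorb via Young's inequality with exponents $p/(p-1)$ and $p$, with constants uniform because $p\in[2,3]$. Your version is slightly more careful, since it checks that the test function is admissible and makes the constant explicit; dividing by the coercivity constant after absorption adds one more harmless factor $2K^6$.
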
 

\begin{proof}
Let $\varphi$ be a smooth function with $\varphi \equiv 1$ on $Q_r$, and $\varphi\equiv 0$ outside $Q_{2 r}$, and $\vert \grad \varphi\vert \le C/r$. We test the equation against $\eta = \varphi^p (u-u_{Q_{2 r}})$ to get  
\begin{align*} 
0 &= \int A(x,\grad u)\cdot \grad \eta \, dx \\
&= \int \varphi^p A(x,\grad u)\cdot \grad u + p\varphi^{p-1} (u-u_{Q_{2 r}}) A(x,\grad u)\cdot \grad \varphi\, dx. 
\end{align*} 
This implies that 
\begin{align*}
\int \varphi^p  \vert \grad u\vert^p\,dx \le 3 K^{12} \int  \varphi^{p-1}(u-u_{Q_{2 r}}) \vert \grad u\vert^{p-1} \vert \grad \varphi\vert\, dx. 
\end{align*} 
Next, we apply the Young inequality with $\eps$ and dual exponents $\frac{p-1}{p} + \frac{1}{p} = 1$ to get 
\begin{align*} 
\int \varphi^p  \vert \grad u\vert^p\,dx &\le \frac 1 2  \int \varphi^p \vert \grad u\vert^p \, dx + \frac{(p-1)^{p-1}}{p^p}(6K^{12})^{p-1} \int \vert u - u_{Q_{2 r}}\vert^p \vert \grad \varphi\vert^p\, dx\\
& \le \frac 1 2 \int \varphi^p \vert \grad u\vert^p\, dx + 36 K^{24} \int \vert u-u_{Q_{2 r}}\vert^p \vert \grad \varphi\vert^p\, dx. 
\end{align*} 
Thus we have 
\[
\int \varphi^p \vert \grad u\vert^p\, dx \le C(K) \int \vert u-u_{Q_{2 r}}\vert^p \vert \grad \varphi\vert^p\, dx. 
\]
Finally, this yields 
\[
\int_{Q_r} \vert \grad u\vert^p\, dx \le \frac{C(K)}{r^p} \int_{Q_{2 r}} \vert u-u_{Q_{2 r}}\vert^p\, dx,
\]
as needed. 
\end{proof} 

Next we apply the Poincar\'e-Sobolev inequality to deduce a reverse H\"older inequality. 

\begin{prop}
Assume that $Q_r \subset Q_{2 r} \subset\subset \an$ for some concentric cubes $Q_r$ and $Q_{2 r}$. Then there is an estimate 
\[
\left[\frac{1}{\vert Q_r\vert} \int_{Q_r} \vert \grad u\vert^p\, dx \right]^{1/p} \le {C(K)} \left[\frac{1}{\vert Q_{2 r}\vert} \int_{Q_{2 r}} \vert \grad u\vert^q\, dx\right]^{1/q},
\]
where $q = \frac{3p}{3+p}$. 
\end{prop}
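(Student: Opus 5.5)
The plan is to combine the Caccioppoli inequality from the previous proposition with the Sobolev--Poincar\'e inequality on cubes. Apply the Caccioppoli estimate to the concentric cubes $Q_r\subset Q_{2r}\subset\subset\an$:
\[
\int_{Q_r}\vert\grad u\vert^p\,dx\le \frac{C(K)}{r^p}\int_{Q_{2r}}\vert u-u_{Q_{2r}}\vert^p\,dx .
\]
Dividing by $\vert Q_r\vert = 8^{-1}\vert Q_{2r}\vert$ turns this into an inequality between averages, at the cost of a factor $8$:
\[
\frac{1}{\vert Q_r\vert}\int_{Q_r}\vert\grad u\vert^p\,dx\le \frac{8C(K)}{r^p}\,\frac{1}{\vert Q_{2r}\vert}\int_{Q_{2r}}\vert u-u_{Q_{2r}}\vert^p\,dx.
\]

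Next, control the right-hand side with the Sobolev--Poincar\'e inequality on the cube $Q_{2r}$ in $\R^3$, using the exponent $q=\frac{3p}{3+p}$. This choice gives $q^*=\frac{3q}{3-q}=p$, and $q<3$ because $p\le 3$. In scale-invariant form the inequality reads
\[
\left[\frac{1}{\vert Q_{2r}\vert}\int_{Q_{2r}}\vert u-u_{Q_{2r}}\vert^{p}\,dx\right]^{1/p}\le C\, r\left[\frac{1}{\vert Q_{2r}\vert}\int_{Q_{2r}}\vert\grad u\vert^{q}\,dx\right]^{1/q}.
\]
It follows from the unit-cube version by translation and dilation: the factor $r$ exactly cancels the $r^{-p}$ coming from Caccioppoli after taking $p$-th roots. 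Since the gradient here is Euclidean, the constant is purely Euclidean and involves no $K$.

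Inserting this into the averaged Caccioppoli bound and taking $p$-th roots gives the claim with constant $C(K)$. The only step needing care is the uniformity of the Sobolev--Poincar\'e constant in $p$. The paper needs $C$ to depend only on $K$, not on $p\in[2,3]$. The Sobolev constant for $W^{1,q}\to L^{q^*}$ blows up as $q\to 3$, but here $q=\frac{3p}{3+p}\in[\frac65,\frac32]$ stays in a compact subinterval of $[1,3)$.

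To get the uniform constant, I would use the Gagliardo--Nirenberg--Sobolev proof. First apply the $q=1$ inequality to $\vert v\vert^{\gamma}$ with $\gamma=\frac{q(n-1)}{n-q}$. This yields a constant of the form $\gamma\, C_n$, which is bounded for $q\in[\frac65,\frac32]$. Combining this with the Poincar\'e inequality $\Vert u-u_Q\Vert_{L^q(Q)}\le C r\Vert\grad u\Vert_{L^q(Q)}$ on cubes, whose constant is also bounded uniformly for $q$ in this range, handles the mean-zero normalization: it passes to $u-u_Q$ through a standard extension or a Lipschitz cutoff of the unit cube. Alternatively, one can quote the potential estimate $\vert u(x)-u_Q\vert\le C\int_Q\frac{\vert\grad u(y)\vert}{\vert x-y\vert^{2}}\,dy$ together with Hardy--Littlewood--Sobolev, whose constants are continuous in the exponents away from the endpoint. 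With the Sobolev--Poincar\'e constant bounded uniformly, the final constant is $C(K)$ as stated.
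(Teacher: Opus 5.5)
Your argument is correct and is essentially the paper's proof. You apply the Caccioppoli inequality on $Q_r\subset Q_{2r}$ and then the scale-invariant Sobolev--Poincar\'e inequality with $q^*=p$, where $\frac{3}{q}-\frac{3}{p}=1$ makes the factor $r$ cancel the $r^{-1}$ from Caccioppoli. The only difference is that you actually justify the uniformity of the Sobolev--Poincar\'e constant for $q\in[\frac{6}{5},\frac{3}{2}]$ (via the explicit Gagliardo--Nirenberg--Sobolev constant together with a uniform Poincar\'e inequality, or via Hardy--Littlewood--Sobolev), which the paper simply asserts.
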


\begin{proof}
Let $q = \frac{3p}{3+p}$ so that $1 < q < p \le 3$ and  
\[
q^* = \frac{3q}{3-q} = p. 
\]
The Poincar\'e-Sobolev inequality implies that 
\[
\left[\int_{Q_{2 r}} \vert u - u_{Q_{2 r}}\vert^{p} \, dx\right]^{1/p} \le C(q,r) \left[ \int_{Q_{2 r}} \vert \grad u\vert^q \, dx\right]^{1/q}.
\]
In fact, a simple scaling argument shows that $C(q,r) = C(q)$ does not depend on $r$. Moreover, since $q$ belongs to the compact range $[\frac 6 5, \frac 3 2]$, the constants $C(q)$ are uniformly bounded above by some constant $C$ that does not depend on $q$. 

Combining this with the Caccioppoli inequality, we deduce that 
\begin{align*}
\left[\int_{Q_r} \vert \grad u\vert^p \, dx\right]^{1/p} &\le \frac{C(K)}{r} \left[ \int_{Q_{2 r}} \vert u - u_{Q_{2 r}}\vert^p\, dx\right]^{1/p}  \\
&\le \frac{C(K)}{r} \left[\int_{Q_{2 r}} \vert \grad u\vert^q\, dx\right]^{1/q}. 
\end{align*}
Finally, multiplying both sides by $r^{-3/p}$ and using $\frac 3 q - \frac 3 p = 1$, we get 
\[
\left[\frac{1}{\vert Q_r\vert} \int_{Q_r} \vert \grad u\vert^p\, dx \right]^{1/p} \le {C(K)} \left[\frac{1}{\vert Q_{2 r}\vert} \int_{Q_{2 r}} \vert \grad u\vert^q\, dx\right]^{1/q},
\]
as desired. 
\end{proof}

The last step is to apply Gehring's lemma.    The following form of Gehring's lemma is taken from \cite[Proposition 6.1]{iwaniec1998gehring}. 

\begin{lemma}
Let $\Omega$ be a cube in Euclidean space.   Fix some $1 < b < \infty$. Assume that $f\in L^b(\Omega)$ is a non-negative function which satisfies 
\[
\left(\frac{1}{\vert Q\vert} \int_Q f^b\, dx\right)^{1/b} \le \frac{A}{\vert 2Q\vert} \int_{2Q} f\, dx
\]
for all cubes $Q \subset 2Q \subset \Omega$. Then for every 
\[
b < s < b + \frac{b-1}{10^{3+b} \cdot 4^3 \cdot A^b}
\]
we have 
\[
\left(\frac{1}{\vert \frac 1 2 \Omega\vert} \int_{\frac 1 2 \Omega} f^s \, dx\right)^{1/s} \le {100^3} 2^{\frac 3 s + \frac 3 b} \left(\frac{1}{\vert \Omega\vert} \int f^b\, dx\right)^{1/b}. 
\]
\end{lemma}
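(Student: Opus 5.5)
The lemma is the quantitative form of Gehring's lemma taken from \cite[Proposition 6.1]{iwaniec1998gehring}. I would prove it by the standard Calder\'on--Zygmund stopping-time argument, tracking every constant explicitly.

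\emph{Normalization.} Rescale so that $\Omega$ is the unit cube. Set $\lambda_0$ to be a fixed multiple of $(\frac{1}{|\Omega|}\int_\Omega f^b)^{1/b}$.

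\emph{Level-set estimate.} For each level $\lambda > \lambda_0$, I would run a Calder\'on--Zygmund decomposition of $f^b$ on dyadic subcubes of $\frac12\Omega$ (or on a dilated family, so that the doubled cubes stay inside $\Omega$). This produces disjoint maximal cubes $Q_j$ on which
\[
\lambda^b < \frac{1}{|Q_j|}\int_{Q_j} f^b \le 2^3\lambda^b ,
\]
and these cubes cover $\{f>\lambda\}$ up to a null set.

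On each $Q_j$, apply the reverse H\"older hypothesis. Then split $\int_{2Q_j} f$ into the part where $f \le \eta\lambda$ and the part where $f>\eta\lambda$, with $\eta$ small. The contribution from $\{f \le \eta\lambda\}$ is at most $\eta\lambda |2Q_j|$, and this is absorbed. The result is
\[
\lambda^{b}|Q_j| \lesssim A^b \lambda^{b-1}\int_{2Q_j\cap\{f>\eta\lambda\}} f .
\]
The cubes $2Q_j$ overlap, so I would use a Besicovitch/Vitali-type bounded-overlap count for dilated dyadic cubes; this is where the factor $4^3$ enters. Summing over $j$ gives
\[
\int_{\{f>\lambda\}\cap\frac12\Omega} f^b \le C_1 A^b \lambda^{b-1}\int_{\{f>\eta\lambda\}} f \qquad (\lambda>\lambda_0).
\]

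\emph{Integration in $\lambda$.} Multiply this by $\lambda^{s-b-1}$ and integrate over $\lambda>\lambda_0$, using Fubini. Both sides then become integrals of $f^{s}$ (truncated, to ensure finiteness), with coefficients $\frac{1}{s-b}$ on the left and $\frac{C_1 A^b \eta^{b-s}}{s-1}$ on the right. The left coefficient is large when $s$ is close to $b$. Hence, as long as $s-b$ is smaller than $\frac{b-1}{C_1 A^b}$ times an explicit factor, the right side can be absorbed into the left. Working the numbers out with $\eta$ a fixed power of $10$ should give exactly the threshold $\frac{b-1}{10^{3+b}4^3A^b}$. Finally, remove the truncation by monotone convergence. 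The lower range $\lambda\le\lambda_0$ contributes $\lambda_0^{s-b}\int f^b$, which produces the constant $100^3 2^{3/s+3/b}$ after renormalization.

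\emph{Main obstacle.} The hard part is the explicit bookkeeping: choosing $\eta$ and the dilation factor so that the overlap constant, the doubling constant $2^3$, and the $\eta^{b-s}\le \eta^{-1}$ loss combine into precisely the stated constants. This requires the truncation $f_k=\min(f,k)$, so that the absorption step is legitimate, and it requires checking that the truncated function still satisfies the level-set inequality. Since the paper only needs that the admissible gap in $s$ depends on $A$ and $b$ in a way that is uniform for $b$ in a compact range, I would aim first for this qualitative dependence. After that I would match the constants to those in \cite{iwaniec1998gehring}.
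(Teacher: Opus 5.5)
\textbf{Main gap: localization.} The paper does not prove this lemma; it quotes it from \cite[Proposition 6.1]{iwaniec1998gehring}. Your outline therefore has to stand on its own, and as written it does not close. Your Calder\'on--Zygmund cubes $Q_j$ lie in $\frac12\Omega$, but the hypothesis moves you to $2Q_j$. So the right-hand side of your level-set inequality is an integral over $\bigcup_j 2Q_j$, a set that can reach $\partial\Omega$. Integrate against $\lambda^{s-b-1}\,d\lambda$ over $(\lambda_0,k)$, which is the right way to truncate. Schematically you get
\[
\int_{\frac12\Omega} f^b\min(f,k)^{s-b}\,dx \le \theta\int_{\Omega} f^b\min(f,k)^{s-b}\,dx + \lambda_0^{s-b}\int_\Omega f^b\,dx .
\]
The first term on the right cannot be absorbed, because it lives on the larger set. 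This is not a technicality. Take $f=\operatorname{dist}(x,\partial\Omega)^{-\alpha}$ with $\alpha b<1\le\alpha s$. It satisfies the hypothesis with $A\le 4$ and lies in $L^b(\Omega)$, yet $\int_\Omega f^s=\infty$. So your right-hand side tends to $\infty$ as $k\to\infty$, and truncation buys nothing.

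What is missing is a genuine localization. There are two standard options:
\begin{enumerate}[(i)]
\item Run the stopping argument on intermediate cubes $\rho\Omega\subset\rho'\Omega$ with $\lambda_0\ge(\rho'-\rho)^{-3/b}\bigl(\frac{1}{|\Omega|}\int_\Omega f^b\bigr)^{1/b}$. Then every doubled cube stays in $\rho'\Omega$, and you close with the standard Giaquinta--Modica iteration lemma in $\rho$.
\item Work on all of $\Omega$ with a boundary-distance weight such as $\operatorname{dist}(x,\partial\Omega)^{3/b}f$. Then both sides of the level-set inequality live on the same set.
\end{enumerate}

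\textbf{Two intermediate steps also need repair.}

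First, doubles of disjoint dyadic cubes do not have bounded overlap, even when they are Calder\'on--Zygmund cubes. The intervals $I_k=\bigl[\tfrac13(1-4^{-k}),\tfrac13(1-4^{-k-1})\bigr]$, $k\ge0$, are exactly the Calder\'on--Zygmund intervals of $\chi_{[0,1/3]}$ on $[0,1]$ at any level in $(\tfrac23,1)$. Every $2I_k$ contains $\tfrac13$. Use a Vitali selection instead: $\sum_j|Q_j|\le|\bigcup_j 2Q_j|\le 3^3\sum_i|2Q_{j_i}|$ with the $2Q_{j_i}$ disjoint. Equivalently, use the weak $(1,1)$ bound for the maximal function.

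Second, absorbing the part of $\frac{A}{|2Q_j|}\int_{2Q_j}f$ where $f\le\eta\lambda$ forces $A\eta<1$. So $\eta$ must be of order $1/(2A)$, not a fixed power of $10$. Also, Fubini produces $\eta^{1-s}$, not $\eta^{b-s}$. The resulting factor $A\cdot(2A)^{s-1}\approx A^{s}$ is what puts $A^b$ into the admissible range of $s$.

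\textbf{Constants.} You never derive the specific numbers $10^{3+b}4^3$ and $100^3 2^{3/s+3/b}$; you only propose to match them to Iwaniec. For the paper's use in Proposition~\ref{prop-improvement}, a repaired standard argument suffices. There one only needs a gap in $s$ that is uniform for $b\in[\frac53,2]$, together with a uniform constant. That is not a proof of the lemma with the constants as stated, however.
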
 

We can now prove the main result of this section.  Let $U$ be an open set with smooth boundary which is compactly contained in $\an$. 

\begin{prop} \label{prop-improvement}
There are constants $C > 0$ and $\eps >0$ that depend only on $K$ and $U$ such that 
\[
\left(\int_{U} \vert \grad u\vert^{p+\eps}\, dx\right)^{1/(p+\eps)} \le C \left(\int_{\an} \vert \grad u\vert^p\, dx\right)^{1/p}.
\]
\end{prop}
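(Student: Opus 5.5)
The plan is to feed the reverse H\"older inequality from the previous proposition into the stated form of Gehring's lemma, applied to the function $f = \vert \grad u\vert^q$ with exponent $b = p/q$. Since $q = \frac{3p}{3+p}$, we have $b = \frac{3+p}{3} \in [\frac 5 3, 2]$, so $b$ lies in a compact subinterval of $(1,\infty)$ uniformly in $p\in[2,3]$. Raising the reverse H\"older inequality to the power $q$ gives
\[
\left(\frac{1}{\vert Q\vert}\int_Q f^b\, dx\right)^{1/b} \le \frac{C(K)^q}{\vert 2Q\vert}\int_{2Q} f\, dx
\]
for all concentric cubes with $Q\subset 2Q \subset\subset \an$. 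Since $q\le 3/2$ and we may assume $C(K)\ge 1$, we can take $A = C(K)^{3/2}$, which depends only on $K$.

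Next I cover $\overline U$ by finitely many cubes. Since $U\subset\subset\an$, there is a $d>0$ (depending on $U$) with $\operatorname{dist}(U,\bd\an) > 4d$. Take cubes $\Omega_1,\dots,\Omega_N$ of side length $d$ whose half-cubes $\frac12\Omega_j$ cover $\overline U$ and with $\Omega_j \subset\subset \an$. On each $\Omega_j$ the hypothesis of Gehring's lemma holds for all $Q\subset 2Q\subset \Omega_j$. The admissible range for $s$ is then
\[
b < s < b + \frac{b-1}{10^{3+b}\cdot 4^3 \cdot A^b},
\]
and because $b\in[\frac53,2]$ and $A=A(K)$, the width of this interval is bounded below by some $\eta(K)>0$ independent of $p$. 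Fix $s = b + \eta(K)/2$. Gehring's lemma gives
\[
\int_{\frac12\Omega_j}\vert \grad u\vert^{qs}\,dx \le C(K,d)\left(\int_{\Omega_j}\vert\grad u\vert^{p}\,dx\right)^{s/b},
\]
where the dimensional constants $100^3 2^{3/s+3/b}$ and the volume normalizations depend only on $d$, and hence only on $U$. Set $p+\eps = qs = p + q\eta/2$. Since $q\ge 6/5$, we have $\eps \ge \frac35\eta(K)$; shrinking to $\eps = \frac35\eta(K)$ and applying H\"older on the bounded set $U$, we obtain an $\eps$ depending only on $K$.

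Finally, sum over $j$. This gives $\int_U \vert\grad u\vert^{p+\eps} \le N\, C \left(\int_{\an}\vert\grad u\vert^p\right)^{(p+\eps)/p}$, and taking $(p+\eps)$-th roots yields the claim with $C$ depending on $K$ and $U$. The one point needing care, which I expect to be the main obstacle, is uniformity in $p$. Every constant involved, namely the Gehring width, the factor $A^b$, the exponents $s/b$, and the ratio $(p+\eps)/p$, must be bounded using only $p\in[2,3]$. This holds because all of these quantities are continuous in $p$ on a compact range and never degenerate: $b-1\ge 2/3$, and $A$ does not blow up. The finite covering also depends only on $U$, so $N$ and $d$ introduce no dependence on $p$.
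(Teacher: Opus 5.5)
Your proposal is correct and follows the same route as the paper: rewrite the reverse H\"older inequality with $f=\vert\nabla u\vert^q$ and $b=p/q\in[\frac53,2]$, apply Gehring's lemma with an admissible width bounded below uniformly in $p\in[2,3]$, and cover $U$ by finitely many cubes compactly contained in $\an$. You are also slightly more careful than the paper in two places: you track the constant as $C(K)^q\le C(K)^{3/2}$, and you fix $s$ strictly inside the Gehring interval and then use H\"older to obtain a $p$-independent $\eps$, whereas the paper's choice $\eps=1/(10^8C(K)^2)$ sits at the open endpoint of that interval.
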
 

\begin{proof}
Fix a cube $\Omega \subset\subset \an$. Consider a cube $Q$ with $Q \subset 2Q \subset \Omega$.  From above, there is a reverse H\"older inequality 
\[
\left(\frac{1}{\vert Q\vert}\int_Q \vert \grad u\vert^p\, dx\right)^{1/p} \le C(K) \left(\frac{1}{\vert 2Q\vert} \int_{2Q} \vert \grad u\vert^q\, dx\right)^{1/q} 
\]
for all such cubes $Q$. Without loss of generality, we can suppose that $C(K) > 1$.  Let $f = \vert \grad u\vert^q$ and let $b = \frac p q = \frac{3+p}{3}\in [\frac 5 3,2]$. Then we have 
\[
\left(\frac{1}{\vert Q\vert} \int_Q f^b \,dx\right)^{1/b} \le \frac{C(K)}{\vert 2Q\vert} \int_{2Q} f\, dx
\]
for all such cubes $Q$.  We note the crude bound 
\[
\frac{b-1}{10^{3+b} \cdot 4^3 \cdot C(K)^b} \ge \frac{b}{p \cdot 10^8 \cdot C(K)^2}. 
\]
Thus we can apply Gehring's lemma to deduce that 
\[
\left(\frac{1}{\vert \frac 1 2 \Omega\vert} \int_{\frac 1 2 \Omega} \vert \grad u\vert^{qs} \, dx\right)^{1/(qs)} \le A \left(\frac{1}{\vert \Omega\vert} \int_\Omega \vert \grad u\vert^p\, dx\right)^{1/p}
\]
for $b < s < b\left(1 + \frac{1}{p \cdot 10^8 \cdot  C(K)^2}\right)$ and $A = 100^3 \cdot 2^6$. In particular, we have 
\[
\left(\frac{1}{\vert \frac 1 2 \Omega\vert} \int_{\frac 1 2 \Omega} \vert \grad u\vert^{p + \eps} \, dx\right)^{1/(p+\eps)} \le A \left(\frac{1}{\vert \Omega\vert} \int_\Omega \vert \grad u\vert^p\, dx\right)^{1/p}
\]
for $\eps = \frac{1}{10^8 \cdot C(K)^2}$. Finally, this local inequality implies that 
\[
\left(\int_{U} \vert \grad u\vert^{p+\eps}\, dx\right)^{1/(p+\eps)} \le C \left(\int_{\an} \vert \grad u\vert^p\, dx\right)^{1/p} 
\]
where $C$ and $\eps$ depend only on $K$ and $U$. 
\end{proof}

\section{Convergence in Measure}

In this section, we prove the main theorem about convergence in measure. 

\begin{theorem}
    Assume $M^3$ is a closed three manifold. Assume that smooth metrics $g_k$ on $M$ converge in measure to a smooth metric $g$. Further assume that there is a constant $K > 1$ such that the identity maps $(M,g_k)\to (M,g)$ are all $K$-bi-Lipschitz. Then if each metric $g_k$ has non-negative scalar curvature so does $g$. 
\end{theorem}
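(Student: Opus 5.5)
We argue by contradiction, following the sketch in the introduction. Suppose each $g_k$ has $R(g_k)\ge 0$ but $R_g(y)<0$ for some $y\in M$. First fix the exponent $p$. Because the identity maps are $K$-bi-Lipschitz, any rescaling of $g_k$ or $g$ in $g$-normal coordinates near $y$ is $K'$-bi-Lipschitz to the Euclidean metric on $\an$, with $K'$ depending only on $K$ (for instance $K'=2K$ once the rescaling parameter is small). Proposition \ref{prop-improvement} then gives $\eps=\eps(K')>0$, independent of $p\in[2,3]$. We choose $p\in(2,3)$ with $p+\eps>3$, fix $U$ a smooth open set with $\an(1)\cup\an(2)\subset\subset U\subset\subset\an$, and define $a$, $\mathcal Q_1$, $\mathcal Q_2$ for this $p$.

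Next we choose the scale. Let $g_\rho(x)=g(\rho x)$ and $g_{k,\rho}(x)=g_k(\rho x)$ in $g$-exponential coordinates at $y$. By the two expansion propositions of Section \ref{Section-Expansion}, $\mathcal Q_1(g_\rho)-\mathcal Q_2(g_\rho)=(b_1-b_2)\rho^2R_g(y)+O(\rho^3)$. Since $b_2>b_1$ and $R_g(y)<0$, this is positive for small $\rho$. We fix such a $\rho$ and drop it from the notation. Writing $D(r)$ for the annular quantity of $u=\mathcal U(g)$, this says exactly $D(1)>2^aD(2)$. On the other hand, let $u_k$ be the $g_k$-$p$-harmonic function on $\an$ with boundary values $|x|^{-a}$. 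Since $0<u_k\le 2^a$ and the level sets are connected by the maximum principle, Section \ref{Section-Monotonicity} (general case, using $R(g_k)\ge0$) shows $r\mapsto r^aD_k(r)$ is non-decreasing, so $D_k(1)\le 2^aD_k(2)$. It therefore suffices to show $D_k(r)\to D(r)$ for $r=1,2$. Because $D_k(r)=r^{1-a}c_\psi+\int\varphi(r,u_k)|\grad^{g_k}u_k|^3\,dv_{g_k}$, with integrand supported in $\an(r)\subset U$, this reduces to convergence of these integrals.

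The convergence step is the main one. We compare $u_k$ and $u$ through the weak formulations. The coefficients $A_k(x,\xi)$ converge to $A(x,\xi)$ in measure and satisfy uniform structural bounds with constant $K^6$. Testing the $g_k$-equation and the $g$-equation with $u_k-u\in W^{1,p}_0$, and using strong monotonicity of $\xi\mapsto A(x,\xi)$ for $p\ge2$, we get
\[
c\int|\grad(u_k-u)|^p\le\int (A_k(x,\grad u)-A(x,\grad u))\cdot\grad(u_k-u)\,dx .
\]
Here $u$ is smooth, the coefficients are uniformly bounded, and the convergence is in measure, so $A_k(x,\grad u)\to A(x,\grad u)$ in $L^{p'}$ by dominated convergence. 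Hence $u_k\to u$ in $W^{1,p}$. Strict monotonicity of $A$ needs some care because $g\ne g_k$ in the monotonicity term, and the clean route is to use monotonicity of $A_k$ itself, with constant depending only on $K$.

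Proposition \ref{prop-improvement} bounds $\grad u_k$ uniformly in $L^{p+\eps}(U)$, since $\|\grad u_k\|_{L^p(\an)}$ is bounded by comparison with the boundary data. Interpolation then gives $u_k\to u$ in $W^{1,q}(U)$ for any $3\le q<p+\eps$, in particular for $q=3$. Now $|\grad^{g_k}u_k|^3\sqrt{\det g_k}$ differs from the corresponding $g$-expression by terms involving $g_k-g\to0$ in measure, multiplied by quantities uniformly integrable in $L^{(p+\eps)/3}$ with $(p+\eps)/3>1$; Vitali's theorem makes these terms vanish. The factor $\varphi(r,u_k)$ converges uniformly, since $u_k\to u$ uniformly by Morrey on $U$ ($q>3$). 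Therefore $D_k(r)\to D(r)$, and we obtain $D(1)\le 2^aD(2)$, a contradiction. I expect the uniform-integrability step, which is why $p>3-\eps$ is essential, to be the crux. Everything else is bookkeeping of the $K$-dependence.
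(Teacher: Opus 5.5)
Your proposal is correct and follows the paper's proof essentially step for step: you fix $p$ using the $K$-uniform Gehring exponent, choose the scale from the expansions with $b_2>b_1$, invoke monotonicity for $g_k$, and pass to the limit via $W^{1,p}$ convergence (using the monotonicity of $A_k$) upgraded by improved integrability, with your Vitali argument serving as a compact substitute for the paper's term-by-term splitting. One harmless sign slip: the right-hand side of your displayed inequality should be $\int (A(x,\grad u)-A_k(x,\grad u))\cdot\grad(u_k-u)\,dx$, which does not affect the subsequent H\"older estimate.
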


\begin{proof}
    Assume for contradiction that each metric $g_k$ has non-negative scalar curvature, but there is a point $q\in M$ such that $R_g(q) < 0$. Introduce geodesic normal coordinates $x$ near $q$ and introduce the rescaled metrics $g_r(x) = g(rx)$ and $g_{k,r}(x) = g_k(rx)$ defined on $\an = B(0,16)-B(0,1/2)$. Note that the metrics $g_{k,r}$ will all be $2K$-bi-Lipschitz to the Euclidean metric on $\an$ for small enough $r$.  
    
    Choose $p<3$ close enough to 3 so that $p+\eps > 3$, where $\eps$ is the improvement of integrability constant from Proposition \ref{prop-improvement} (applied with the $2K$ bi-Lipschitz bound and with the sets $U = \an(1)$ and $U=\an(2)$). Let $u_r$ be the $p$-harmonic function on $\an$ in the $g_r$ metric which is equal to $\vert x\vert^{-a}$ on the boundary of $\an$. Let $D_{p,r}$ be the annular quantity associated to $u_r$ in the $g_r$ metric. Since $R_{g}(q) < 0$, by the results of Section \ref{Section-Expansion}, we can fix some $r$ small enough that 
    \[
    2^a  D_{p,r}(2) <  D_{p,r}(1). 
    \]
    On the other hand, let $u_{k,r}$ be the $p$-harmonic function on $\an$ in the $g_{k,r}$ metric which equals $\vert x\vert^{-a}$ on the boundary of $\an$. Let $D_{p,k,r}$ be the annular quantity associated to $u_{k,r}$ in the $g_{k,r}$ metric. Since $g_{k,r}$ has non-negative scalar curvature, the monotonicity results in Section \ref{Section-Monotonicity} imply that 
    \[
    2^a  D_{p,k,r}(2) \ge  D_{p,k,r}(1).
    \]
    Thus we will obtain a contradiction provided we can show $D_{p,k,r}(i)\to D_{p,r}(i)$ as $k\to \infty$ for $i=1,2$. For simplicity, we will prove this for $i = 1$. The proof for $i=2$ is exactly the same.

     To establish this, we first claim that $u_{k,r}\to u_r$  in $W^{1,p}(\an)$.  For convenience, we drop the subscript $r$ in the remainder of the proof. Define 
    \begin{gather*}
        A(x,\xi) = \sqrt{\det g} (g^{\ell m}\xi_\ell\xi_m)^{\frac{p-2}{2}}g^{ij}\xi_i\bd_j,\\
        A_k(x,\xi) = \sqrt{\det g_k} (g_k^{\ell m}\xi_\ell\xi_m)^{\frac{p-2}{2}}g_k^{ij}\xi_i\bd_j.
    \end{gather*}
    By the bi-Lipschitz bounds, one obtains  
    \begin{gather*}
    \vert A(x,\xi)\vert\le C \vert \xi\vert^{p-1},\\
        \vert A_k(x,\xi)\vert \le C \vert \xi\vert^{p-1},\\
         \big(A_k(x,\xi) - A_k(x,\eta)\big)\cdot (\xi - \eta) \ge c \vert \xi - \eta\vert^p 
    \end{gather*}
    for some uniform constants $C > 0$ and $c > 0$. We now test the $p$-harmonic equation for $u$ and $u_k$ against the function $u_k - u$: 
    \[
    \int_{\an} A(x,\grad u)\cdot \grad(u_k-u)\, dx = 0, \quad \int_{\an} A_k(x,\grad u_k)\cdot \grad (u_k-u)\, dx = 0.
    \]
    Here the gradient and dot product are the Euclidean gradient and dot product. This implies that 
    \begin{align*}
    \int_{\an} \left[A(x,\grad u) - A_k(x,\grad u)\right]\cdot \grad (u_k-u)\, dx &= \int_{\an} \left[A_k(x,\grad u_k) - A_k(x,\grad u)\right]\cdot \grad(u_k-u)\, dx\\
    &\ge c \int_{\an} \vert \grad u_k - \grad u\vert^p \, dx. 
    \end{align*} 
    On the other hand, we have 
    \begin{align*} 
    &\int_{\an} \left[A(x,\grad u) - A_k(x,\grad u)\right]\cdot \grad (u_k-u)\, dx  \\
    &\qquad \le  \left[ \int_{\an} \vert A(x,\grad u) - A_k(x,\grad u)\vert^{\frac{p}{p-1}} \, dx\right]^{(p-1)/p}\left[\int_{\an} \vert \grad u_k - \grad u\vert^p\, dx \right]^{1/p}.
    \end{align*}
    Combined with the previous inequality, this yields 
    \[
    \int_{\an} \vert \grad u_k - \grad u\vert^p\, dx \le \int_{\an} \vert A(x,\grad u) - A_k(x,\grad u)\vert^{\frac{p}{p-1}}\, dx. 
    \]
    Finally, the right hand side goes to 0 as $k\to \infty$. Indeed, since $g_k \to g$ in measure, we can pass to a subsequence for which $g_k \to g$ pointwise almost-everywhere in $\an$. This implies that $A_k(x,\grad u) \to A(x,\grad u)$ pointwise almost-everywhere. Moreover,  
    \[
    \vert A(x,\grad u) - A_k(x,\grad u)\vert^{\frac{p}{p-1}} \le C \vert \grad u\vert^p 
    \]
    and the right hand side is integrable. Hence the integral on the right goes to 0 by dominated convergence. Thus we obtain $\grad u_k \to \grad u$ in $L^p$. Since $u_k = u$ on the boundary of $\an$, the Poincare inequality implies that $u_k \to u$ in $L^p$ as well. Thus we have $u_k\to u$ in $W^{1,p}$. 

    Next we upgrade this to convergence in $W^{1,s}(\an(1))$ for some $s > 3$.  By the improvement of integrability, we have 
    \begin{align*}
        \left[\int_{\an(1)} \vert \grad u_k\vert^{p+\eps}\, dx \right]^{1/(p+\eps)} \le C \left[\int_{\an} \vert \grad u_k\vert^p\, dx\right]^{1/p} 
    \end{align*}
    for $i = 1,2$. 
    Since $u_k\to u$ in $W^{1,p}(\an)$, the right hand side is uniformly bounded independent of $k$. It follows that $\grad u_k - \grad u$ is uniformly bounded in $L^{p+\eps}(\an(1))$. 
    Since $\grad u_k\to \grad u$ in $L^p(\an(1))$ and $\grad u_k - \grad u$ is uniformly bounded in $L^{p+\eps}(\an(1))$, it follows from the interpolation inequalities that $\grad u_k \to \grad u$ strongly in $L^s(\an(1))$ for every $p < s < p+\eps$. 
    In particular, we have now shown that $u_k \to u$ in $W^{1,s}(\an(1))$ for some $s > 3$.

    Finally, we claim that $\mathcal D_{p,k}(1)\to D_{p}(1)$.  Observe that $u_k\to u$ in $C^0$ and so $u_k \ge c > 0$ for all large enough $k$. Thus $\varphi(1,u_k) \to \varphi(1,u)$ pointwise and $\vert \varphi(1,u_k)\vert \le C$ for a uniform constant $C$. Again, by passing to a subsequence, we can suppose that $g_k \to g$ almost-everywhere and that $\grad u_k\to \grad u$ almost-everywhere. We have 
    \begin{gather*}
     D_{p,k}(1) = c_\psi + \int_{\an(1)} \varphi(1,u_k) \vert \grad^{g_k} u_k\vert_{g_k}^3\, dv_{g_k},\\
     D_p(1) = c_\psi + \int_{\an(1)} \varphi(1,u)\vert \grad^g u\vert_g^3 \, dv_g. 
    \end{gather*} 
    It follows that 
    \begin{align*}
        D_{p,k}(1) - D_p(1) &= \int_{\an(1)} \varphi(1,u_k)\sqrt{\det g_k} \vert \grad^{g_k} u_k\vert_{g_k}^3 - \varphi(1,u)\sqrt{\det g} \vert \grad^g u\vert_g^3\, dx\\
        &= \int_{\an(1)} \varphi(1,u_k)\sqrt{\det g_k} \vert \grad^{g_k} u_k\vert_{g_k}^3 - \varphi(1,u_k)\sqrt{\det g_k} \vert \grad^g u\vert_g^3\, dx\\ 
        &\qquad + \int_{\an(1)} \varphi(1,u_k)\sqrt{\det g_k} \vert \grad^{g} u\vert_g^3 - \varphi(1,u)\sqrt{\det g} \vert \grad^g u\vert_g^3\, dx.
    \end{align*}
    Observe that 
    \[
    \int_{\an(1)} \varphi(1,u_k)\sqrt{\det g_k} \vert \grad^{g} u\vert_g^3 - \varphi(1,u)\sqrt{\det g} \vert \grad^g u\vert_g^3\, dx \to 0
    \]
    by dominated convergence. Also observe that 
    \begin{align*}
        &\left\vert \int_{\an(1)} \varphi(1,u_k)\sqrt{\det g_k} \vert \grad^{g_k} u_k\vert_{g_k}^3 - \varphi(1,u_k)\sqrt{\det g_k} \vert \grad^g u\vert_g^3\, dx\right\vert \\
        & \qquad \le C \int_{\an(1)} \left\vert \vert \grad^{g_k} u_k\vert_{g_k}^3 - \vert \grad^g u\vert_g^3\right\vert\, dx \\
        &\qquad \le C \int_{\an(1)} \left\vert \vert \grad^{g_k} u_k\vert_{g_k}^3 - \vert \grad^{g_k} u\vert^3_{g_k}\right\vert + \left\vert \vert \grad^{g_k} u\vert^3_{g_k} - \vert \grad^g u\vert^3_g\right\vert\, dx.
    \end{align*}
    We again have 
    \[
    \int_{\an(1)} \left\vert \vert \grad^{g_k} u\vert_{g_k}^3 - \vert \grad^g u\vert_g^3\right\vert\, dx \to 0
    \]
    by dominated convergence. We further have 
    \begin{align*}
        &\int_{\an(1)} \left\vert \vert \grad^{g_k} u_k\vert^3_{g_k} - \vert \grad^{g_k} u\vert^3_{g_k} \right\vert\, dx\\
        &\qquad \le \int_{\an(1)} (\vert \grad^{g_k} u_k\vert^2_{g_k} + \vert \grad^{g_k} u_k\vert_{g_k} \vert \grad^{g_k} u\vert_{g_k} + \vert \grad^{g_k} u\vert^2_{g_k}) \vert \grad^{g_k} u_k - \grad^{g_k} u\vert_{g_k} \, dx\\
        &\qquad \le C \left[\int_{\an(1)} (\vert \grad u_k\vert^2 + \vert \grad u_k\vert \vert \grad u\vert + \vert \grad u\vert^2)^{3/2}\right]^{2/3} \left[\int_{\an(1)} \vert \grad u_k-\grad u\vert^3\, dx\right]^{1/3}\\
        &\qquad \le C \left[\int_{\an(1)} \vert \grad u_k-\grad u\vert^3\, dx\right]^{1/3}
    \end{align*}
    and so this also goes to 0. Combining everything, we deduce that $D_{p,k}(1)\to D_p(1)$ and this completes the proof. 
\end{proof}

\section{Non-Collapsed Convergence in \texorpdfstring{$L^p$}{Lp}}

In this final section, we note that the arguments of the paper also imply that scalar curvature lower bounds are preserved by non-collapsed convergence of the metric together with its inverse in $L^p$.  Here, following \cite{lee2023dp}, we use Perelman's entropy and Perelman's $\nu$-functional \cite{perelman2002entropy} to measure non-collapsing. 
The goal is to prove Theorem \ref{main-lp}.

The proof is essentially the same as before, but using the $d_P$-theory \cite{lee2023dp} to obtain the improvement of integrability.  Fix some $Q > 3$.  Let $g$ be a smooth metric on $M^3$ and assume that $R(g) \ge -\delta$ and $\nu(g,2)\ge -\delta$. We collect the following properties of the $d_Q$-metric which hold once $\delta$ is small enough. 

\begin{prop}
\label{cutoff} 
Fix a number $Q > 3$. Consider any metric $g$ satisfying $R(g) \ge -\delta$ and $\nu(g,2)\ge -\delta$.  Assuming $\delta$ is small enough (depending on $Q$), the following property is true. For every $x\in M$ and $0 < r \le 1$, there exists a function $\varphi$ such that $\varphi\equiv 1$ on $\mathcal B_{Q,g}(x,r)$, and $\varphi \equiv 0$ outside $\mathcal B_{Q,g}(x,2r)$, and 
\[
\left[\int_{\mathcal B_{Q,g}(x,2r)} \vert \grad \varphi\vert^Q\, dv_g\right]^{1/Q} \le \frac{C}{r}. 
\]
\end{prop}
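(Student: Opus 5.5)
The plan is to build $\varphi$ as a truncated, rescaled version of the $d_Q$-distance function to $x$. Recall from \cite{lee2023dp} that $d_{Q,g}(x,y)$ is defined via the Green's-function-weighted conformal metric $G_x^{2/(n-2)\cdot(\dots)}g$, or equivalently as a supremum over functions $f$ with $\|\grad f\|_{L^Q}\le 1$ normalized appropriately. The key structural fact from the $d_P$-theory is that, for $\delta$ small depending on $Q$, the function $y\mapsto d_{Q,g}(x,y)$ lies in $W^{1,Q}$. Moreover, its gradient has a scale-invariant $L^Q$ bound on $d_Q$-balls, namely $\|\grad d_{Q,g}(x,\cdot)\|_{L^Q(\mathcal B_{Q,g}(x,2r))}\le C$ for $r\le 1$. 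This is the analogue of $|\grad d|=1$ in the smooth setting. In the normalization of \cite{lee2023dp}, the $d_Q$-distance is essentially characterized by
\[
d_{Q,g}(x,y) = \sup\Big\{ |f(x)-f(y)| : \|\grad f\|_{L^Q}\le 1 \text{ in the appropriate weighted sense}\Big\},
\]
so I would first extract from \cite[Sections 4--6]{lee2023dp} the precise statement that near-extremal test functions exist with controlled gradient.

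Concretely, I would define
\[
\varphi(y)=\min\Big\{1,\max\Big\{0,\, 2-\tfrac{d_{Q,g}(x,y)}{r}\Big\}\Big\}.
\]
This gives $\varphi\equiv1$ on $\mathcal B_{Q,g}(x,r)$ and $\varphi\equiv0$ outside $\mathcal B_{Q,g}(x,2r)$. Its gradient is $r^{-1}\grad d_{Q,g}(x,\cdot)$ on the shell, and zero elsewhere. The estimate $\|\grad\varphi\|_{L^Q}\le C/r$ then reduces to
\[
\int_{\mathcal B_{Q,g}(x,2r)}|\grad d_{Q,g}(x,\cdot)|^Q\,dv_g\le C.
\]
If $d_Q$ is not itself Sobolev, I would instead use the approximating functions from its definition. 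For each $y$ on the shell, pick near-optimal $f_y$ with $\|\grad f_y\|_{L^Q}\le 1$ and $|f_y(x)-f_y(y)|\ge d_Q(x,y)-\eta$. Then take a truncated supremum or infimum over a finite family to produce a $W^{1,Q}$ function separating the two balls. The $L^Q$ norm of the gradient of a max of finitely many functions is controlled by the sum, which is where a covering or doubling argument is needed.

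The main obstacle is this uniform gradient control on the distance-like function. It should follow from the results in \cite{lee2023dp} that, under $\nu(g,2)\ge-\delta$ and $R\ge-\delta$ with $\delta$ small, the $d_Q$-balls satisfy volume comparability $|\mathcal B_{Q,g}(x,r)|\approx r^3$ (Ahlfors regularity) for $r\le1$, together with the Hölder comparison between $d_Q$ and $d_g$. I would try first to combine Ahlfors regularity with the definitional bound $\|\grad f\|_{L^Q}\le1$ for the extremal functions and rescale to unit size. This should give $C$ depending only on $Q$, and hence the estimate with the correct $1/r$ scaling, since $Q$-energy scales like $r^{3-Q}$ while the $d_Q$-normalization absorbs the remaining factor.
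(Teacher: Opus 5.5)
Your main route does not go through. The bound you reduce to, $\int_{\mathcal B_{Q,g}(x,2r)}|\grad d_{Q,g}(x,\cdot)|^Q\,dv_g\le C$, is false already for the Euclidean metric. There $d_Q(x,y)=c\,|x-y|^{1-3/Q}$, so $|\grad d_Q(x,\cdot)|^Q\sim|x-y|^{-3}$, which is not integrable at $x$ in dimension three. Thus $d_Q(x,\cdot)$ is not in $W^{1,Q}$ near $x$.

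Only the integral over the shell $\{r<d_Q(x,\cdot)<2r\}$ can be bounded; in $\R^3$ it is a fixed constant by scale invariance. For a general metric with small entropy deficit you give no argument for Sobolev regularity of $d_{Q,g}(x,\cdot)$, even on shells. The definition and the triangle inequality only give the H\"older-type bound $|d_Q(x,y)-d_Q(x,z)|\le d_Q(y,z)$.

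Your last paragraph also uses the wrong volume growth. In this normalization a $d_Q$-ball of radius $r$ is comparable to a Euclidean ball of radius $r^{Q/(Q-3)}$, so $|\mathcal B_{Q,g}(x,r)|\approx r^{3Q/(Q-3)}$, not $r^3$. That exponent is exactly what the paper uses. It does not build $\varphi$ itself; it takes the cutoff from the proof of \cite[Lemma 8.13]{lee2023dp} (Equation (8.30) there) and turns it into the $C/r$ bound using $|\mathcal B_{Q,g}(x,2r)|\approx r^{3Q/(Q-3)}$.

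Your fallback, by contrast, can be completed into a self-contained proof. It needs no regularity of $d_Q$, only its definition and uniform doubling of $d_Q$-balls. The observation you are missing is that any $f$ with $\|\grad f\|_{L^Q}\le 1$ automatically satisfies $|f(z)-f(x)|\le d_Q(x,z)$ for every $z$.

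So suppose $d_Q(x,y)\ge 2r$ and $|f_y(y)-f_y(x)|\ge d_Q(x,y)-r/4$. Then the function $\tilde f_y=\min\{1,\tfrac{2}{r}(|f_y-f_y(x)|-r)_+\}$ has three properties:
\begin{enumerate}
\item it vanishes on $\mathcal B_{Q,g}(x,r)$;
\item it equals $1$ on $\mathcal B_{Q,g}(y,r/4)$, by the triangle inequality;
\item it satisfies $\|\grad\tilde f_y\|_{L^Q}\le 2/r$.
\end{enumerate}

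Now choose a maximal $r/4$-separated set $\{y_i\}_{i=1}^N$ in $\{2r\le d_Q(x,\cdot)\le 9r/4\}$. The balls $\mathcal B_{Q,g}(y_i,r/8)$ are disjoint and lie in $\mathcal B_{Q,g}(x,3r)$. Uniform volume comparability therefore bounds $N$ independently of $x$, $r$ and $g$; only doubling matters here, so the exponent is irrelevant.

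Set $\varphi=1-\max_i\tilde f_{y_i}$ on $\{d_Q(x,\cdot)<9r/4\}$ and extend by $0$. This is consistent, since every point of $\{2r\le d_Q(x,\cdot)<9r/4\}$ lies within $r/4$ of some $y_i$. The result satisfies all the requirements with $\|\grad\varphi\|_{L^Q}\le 2N^{1/Q}/r$.

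The factor $1/r$ comes from the slope of the truncation together with the normalization in the definition of $d_Q$, not from any volume scaling. Compared with the paper, this route replaces the citation of the construction in \cite{lee2023dp} with an elementary covering argument. It still relies on \cite{lee2023dp} for the uniform volume bounds on $d_Q$-balls.
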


\begin{proof} The existence of such a cutoff function is demonstrated in the proof of Lemma 8.13 in \cite{lee2023dp}. In particular, see \cite[Equation (8.30)]{lee2023dp} and note that the volume of $\mathcal B_{Q,g}(x,2r)$ is proportional to $r^{3Q/(Q-3)}$. 
\end{proof}

\begin{prop}
Fix $Q > 3$ and fix $1 < q < s < q^*$. Then assuming $\delta$ is small enough (depending on $Q$, $q$ and $s$), any metric $g$ with $R(g) \ge -\delta$ and $\nu(g,2) \ge -\delta$ satisfies a uniform Sobolev inequality 
\[
\inf_{c\in \R} \left[\int_{\mathcal B_{Q,g}(x,r)} \vert f - c\vert^s\, dv_g \right]^{1/s} \le C r^{\frac{Q}{Q-3}\left[1+\frac 3 s -\frac 3 q\right]} \left[\int_{\mathcal B_{Q,g}(x,2r)} \vert \grad f \vert^q\, dv_g\right]^{1/q}
\]
for all $x$ and all $0 < r \le 1$ and all functions $f \in W^{1,q}$. Here the constant $C = C(Q,q,s) > 0$ depends only on $Q$, $q$, and $s$. 
\end{prop}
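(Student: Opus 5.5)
The Sobolev--Poincar\'e inequality on $d_Q$-balls should follow from the uniform $d_Q$-geometry of Lee--Naber--Neumayer \cite{lee2023dp}. The plan has three steps: obtain a scale-invariant estimate at unit scale, transport it to scale $r$ by rescaling, and absorb the gap between $\mathcal B_{Q,g}(x,r)$ and $\mathcal B_{Q,g}(x,2r)$ with the cutoff functions of Proposition \ref{cutoff}. Throughout, the only input from the entropy assumption is that, once $\delta$ is small depending on $Q$, the conclusions of \cite[Section 8]{lee2023dp} hold with constants depending only on $Q$.

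\textbf{Step 1: volume and Sobolev input.} From \cite{lee2023dp} we use two facts. First, the $d_Q$-balls are Ahlfors regular of dimension $n_Q := \frac{3Q}{Q-3}$:
\[
C^{-1} r^{n_Q} \le \operatorname{Vol}_g(\mathcal B_{Q,g}(x,r)) \le C r^{n_Q},
\]
as already used in the proof of Proposition \ref{cutoff}. Second, the $\nu$-entropy bound $\nu(g,2)\ge -\delta$ gives a global $L^2$ log-Sobolev inequality. Together with $R \ge -\delta$, this yields a uniform Euclidean-type $L^q$ Sobolev inequality for compactly supported functions, $\|f\|_{L^{q^*}} \le C\|\grad f\|_{L^q}$ with $q^* = \frac{3q}{3-q}$. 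This holds at scales $\le 1$ and for $q$ in compact ranges: one passes from $L^2$ to $L^q$ by applying the $L^2$ inequality to $|f|^\gamma$. I will take this as given from \cite{lee2023dp} (their Sobolev/Poincar\'e estimates underlying the $W^{1,p}$ analysis).

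\textbf{Step 2: localize with the cutoff.} Choose $c$ to be a suitable average of $f$ over $\mathcal B_{Q,g}(x,2r)$, and let $\varphi$ be the cutoff from Proposition \ref{cutoff}. Apply the global Sobolev inequality to $\varphi(f-c)$ to get
\[
\|f-c\|_{L^{q^*}(\mathcal B(x,r))} \le C\|\grad f\|_{L^q(\mathcal B(x,2r))} + C\|(f-c)\grad\varphi\|_{L^q}.
\]
The cutoff error is controlled by H\"older's inequality using $\|\grad\varphi\|_{L^Q}\le C/r$. Since $Q>3$, this lands $f-c$ in an $L^{t}$ norm with $\frac1t = \frac1q - \frac1Q$, so $t<q^*$. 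That norm must in turn be controlled by $\|\grad f\|_{L^q}$ on the larger ball via a weak $(q,q)$ Poincar\'e inequality on $d_Q$-balls, which I expect \cite{lee2023dp} supplies. This is the main obstacle: closing this estimate without losing integrability. If a direct Poincar\'e inequality is not available, I would instead iterate through a chain of balls, or use the estimate only up to exponent $s<q^*$, which is why the statement takes $s<q^*$ strictly.

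\textbf{Step 3: track the power of $r$.} Interpolate from $q^*$ down to $s$ with H\"older's inequality, which costs a factor $\operatorname{Vol}(\mathcal B(x,r))^{1/s-1/q^*} \sim r^{n_Q(1/s - 1/q^*)}$. Multiplying out the factors of $r$ coming from the cutoff and the volume should give exactly the exponent
\[
\frac{Q}{Q-3}\left[1+\frac3s-\frac3q\right].
\]
The cleanest way to confirm this is a scaling check. Under $g \mapsto \lambda^2 g$, the $d_Q$-radius scales by $\lambda^{(Q-3)/Q}$, while the Riemannian quantities scale in the usual way. This forces the stated exponent, and the constant depends only on $Q$, $q$, and $s$.
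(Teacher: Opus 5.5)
Your proposal has a genuine gap at its analytic core, and the step you flag as the main obstacle is essentially the proposition itself.

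\textbf{Step 1 does not follow from the hypotheses.} For $\int\phi^2\,dv_g=1$ and $0<\tau<2$, the bound $\nu(g,2)\ge-\delta$ says
\[
\int\phi^2\log\phi^2\,dv_g\le\tau\int\left(4\vert\grad\phi\vert^2+R\,\phi^2\right)dv_g-\frac32\log(4\pi\tau)-3+\delta .
\]
Here $R$ enters on the right with a plus sign, so the lower bound $R\ge-\delta$ gives you no way to remove it; you would need an upper bound on $R$. Nothing in \cite{lee2023dp} supplies a Euclidean-type inequality $\|f\|_{L^{q^*}}\le C\|\grad f\|_{L^q}$ in this class. Such an inequality at all scales $\le 1$ would in particular give $\operatorname{Vol}_g(B_g(y,\rho))\ge c\rho^3$ for Riemannian balls. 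Perelman-type non-collapsing arguments cannot deliver this without an upper bound on $R$, and that is exactly why the Lee--Naber--Neumayer theory is phrased through $d_Q$-balls and integral control of the coefficients.

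\textbf{Step 2 is circular.} To absorb the cutoff term you need
\[
\inf_c\|f-c\|_{L^t(\mathcal B_{Q,g}(x,2r))}\le Cr\,\|\grad f\|_{L^q}
\]
on a larger ball, with $\frac1t=\frac1q-\frac1Q$, so $q<t<q^*$. With this $t$ the exponent $\frac{Q}{Q-3}\left[1+\frac3t-\frac3q\right]$ equals $1$, so this is precisely the proposition at exponent $t$. Chaining balls, or lowering the target exponent to $s<q^*$, does not break the circularity.

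\textbf{The idea you are missing:} the paper never localizes on $M$ and needs no global Sobolev inequality or cutoff. At $r=1$ it uses the $\eps$-regularity chart of \cite[Theorem 1.7]{lee2023dp}, identifying a neighbourhood of $x$ with $B(0,100)$ so that $\mathcal B_{Q,g}(x,1)\subset\mathcal B_{Q,\text{euc}}(0,3/2)\subset\mathcal B_{Q,g}(x,2)$. On the middle set, which is a round Euclidean ball, it applies the flat Sobolev--Poincar\'e inequality with the worsened exponents $q/\kappa$ and $\kappa s$, where $\kappa>1$ is chosen so that $q/\kappa<\kappa s<(q/\kappa)^*$. It then uses the integral coefficient control of \cite[Theorem 1.11]{lee2023dp} together with H\"older's inequality to convert both sides back to $g$-norms with exponents $s$ and $q$. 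The strict inequality $s<q^*$ is what pays for this H\"older loss; it is not a fallback for a Poincar\'e step. The case of general $r$ then follows by applying the unit-scale estimate to $\tilde g=\rho^{-2}g$ with $\rho^{1-3/Q}=r$, since both hypotheses improve under zooming in. Your Step 3 scaling computation is correct and agrees with this final step, but it is the only part of your plan that carries over.
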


\begin{proof}
First suppose that $r = 1$.  Assuming $\delta$ is small enough, we can apply the $\eps$-regularity theorem \cite[Theorem 1.7]{lee2023dp} to obtain a set $\Omega$ containing $x$ and a diffeomorphism $\psi\colon \Omega\to B(0,100)$ with $\psi(x) = 0$. Here $B(0,100)$ denotes a Euclidean ball of radius 100.  In the following, we identify $\Omega$ with $B(0,100)$ and suppress the diffeomorphism $\psi$.  Again assuming $\delta$ is sufficiently small, one has 
$
\mathcal B_{Q,g}(x,1) \subset \mathcal B_{Q,euc}(0,3/2) \subset B_{Q,g}(x,2).
$

Now consider a function $f \in W^{1,q}(\mathcal B_{Q,g}(x,2))$. 
Since $q < s < q^*$, we can choose $\kappa > 1$ close enough to 1 that 
\[
\frac{q}{\kappa} < s\kappa < \left(\frac{q}{\kappa}\right)^*. 
\]
Note that $f$ restricts to a $W^{1,q/\kappa}$ function on $\mathcal B_{Q,euc}(0,3/2)$. 
 Hence the ordinary Euclidean Sobolev-Poincare inequality gives 
\[
\inf_{c\in \R} \left[\int_{\mathcal B_{Q,\text{euc}}(0,3/2)} \vert f - c\vert^{\kappa s}\, dv_{\text{euc}} \right]^{1/(\kappa s)} \le C  \left[\int_{\mathcal B_{Q,\text{euc}}(0,3/2)} \vert \grad^{euc} f \vert^{q/\kappa} \, dv_{\text{euc}}\right]^{\kappa/q}.
\]
Assuming $\delta$ is small enough, the $L^Q$-coefficient control \cite[Theorem 1.11]{lee2023dp} then implies that 
\[
\inf_{c\in \R} \left[\int_{\mathcal B_{Q,\text{euc}}(0,3/2)} \vert f - c\vert^{s}\, dv_{g} \right]^{1/s} \le C  \left[\int_{\mathcal B_{Q,\text{euc}}(0,3/2)} \vert \grad^{g} f \vert^{q} \, dv_{g}\right]^{1/q}
\]
for some slightly larger constant $C$. Finally, this yields 
\[
\inf_{c\in \R} \left[\int_{\mathcal B_{Q,g}(x,1)} \vert f - c\vert^{s}\, dv_{g} \right]^{1/s} \le C  \left[\int_{\mathcal B_{Q,g}(x,2)} \vert \grad^{g} f \vert^{q} \, dv_{g}\right]^{1/q}
\]
and the proof is complete for $r = 1$. 

The general case now follows from a scaling argument. Indeed, the hypotheses on scalar curvature and entropy improve upon zooming in.  Moreover, defining $\tilde g = \rho^{-2} g$ one has 
\[
\mathcal B_{Q,\tilde g}(x,1) = \mathcal B_{Q,g}(x,\rho^{1-3/Q}).
\]
Hence the result follows by applying the special case above to $\tilde g$ with $\rho^{1-3/Q} = r$ and then re-writing everything back in terms of the metric $g$. 
\end{proof}

\begin{prop}
\label{improvement}
Fix a number $Q > 3$.  If $\delta > 0$ is sufficiently small (depending on $Q$) then the following property holds.  Let $g$ be any metric with $R(g) \ge -\delta$ and $\nu(g,2)\ge -\delta$.  Fix any $2 < p < 3$. Assume that $u$ is a $p$-harmonic function on $\mathcal B_{Q,g}(x,8)$. There are constants $\eps = \eps(Q) > 0$ and $C = C(Q) > 0$ such that 
\[
\left[\int_{\mathcal B_{Q,g}(x,1)} \vert \grad u\vert^{p+\eps}\, dv_g\right]^{{1}/{(p+\eps)}} \le C \left[\int_{\mathcal B_{Q,g}(x,4)} \vert \grad u\vert^p\, dv_g\right]^{1/p}.
\]
\end{prop}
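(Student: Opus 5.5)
We will mimic the proof of Proposition \ref{prop-improvement}, replacing Euclidean cubes by the $d_Q$-balls $\mathcal B_{Q,g}(y,r)$ and the Euclidean Poincar\'e--Sobolev inequality by the uniform Sobolev inequality of the preceding proposition. Then we apply a version of Gehring's lemma on the metric measure space $(M,d_{Q,g},dv_g)$. To make this work we need that space to be doubling with uniform constants. For this we use the volume comparison $\operatorname{vol}_g(\mathcal B_{Q,g}(y,r)) \approx r^{3Q/(Q-3)}$, valid for $0<r\le 1$ once $\delta$ is small, as noted in the proof of Proposition \ref{cutoff}.

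\emph{Step 1 (Caccioppoli).} Fix $y$ and $0<r\le 1$ with $\mathcal B_{Q,g}(y,2r)\subset \mathcal B_{Q,g}(x,4)$. Let $\varphi$ be the cutoff from Proposition \ref{cutoff}, and test the $p$-harmonic equation against $\eta=\varphi^p(u-c)$. This test function is legitimate because, as $Q>3>p$, the product of $\varphi\in W^{1,Q}$ with $u$ lies in $W^{1,p}$ after H\"older. Young's inequality, exactly as before, gives
\[
\int \varphi^p|\grad u|^p\,dv_g \le C\int |u-c|^p|\grad\varphi|^p\,dv_g .
\]
Since $|\grad \varphi|$ is only controlled in $L^Q$, not $L^\infty$, we apply H\"older with exponents $Q/p$ and $Q/(Q-p)$. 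This yields
\[
\int_{\mathcal B(y,r)}|\grad u|^p \le C r^{-p}\Big(\int_{\mathcal B(y,2r)}|u-c|^{s}\Big)^{p/s}, \qquad s=\frac{pQ}{Q-p}>p.
\]

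\emph{Step 2 (reverse H\"older).} Choose $q<p$ with $q<s<q^*$. This is possible because $s\to p$ as $Q\to\infty$; more precisely, we need $\frac{1}{q}-\frac13<\frac1p-\frac1Q$, and we can take $q$ close to $p$ since $\frac1p-\frac13<\frac1p-\frac1Q$ when $Q>3$. Since $p\in(2,3)$ varies in a compact range, $q$ can be chosen uniformly, for instance $q$ depending only on $Q$ with $q\le p-\text{const}$. Apply the uniform Sobolev inequality with the optimal $c$ to bound the right side by $\big(\int_{\mathcal B(y,4r)}|\grad u|^q\big)^{1/q}$ times a power of $r$. Then normalize by volumes using $\operatorname{vol}\approx r^{3Q/(Q-3)}$. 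The exponents should cancel exactly, as in the Euclidean scaling, because the exponent $\frac{Q}{Q-3}[1+\frac3s-\frac3q]$ was designed to be consistent with the volume growth $r^{3Q/(Q-3)}$. The result is
\[
\Big(\fint_{\mathcal B(y,r)}|\grad u|^p\Big)^{1/p}\le C\Big(\fint_{\mathcal B(y,4r)}|\grad u|^q\Big)^{1/q},
\]
with $C=C(Q)$, uniform in $p\in(2,3)$.

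\emph{Step 3 (Gehring).} Set $f=|\grad u|^q$ and $b=p/q>1$, bounded away from $1$ uniformly. Apply Gehring's lemma in doubling metric measure spaces, e.g.\ the version of Zatorska-Goldstein or Bj\"orn--Bj\"orn, which allows the ball on the right to be a fixed dilate. This gives higher integrability $p+\eps$ on $\mathcal B_{Q,g}(x,1)$, with $\eps$ depending only on $b$, the reverse H\"older constant and the doubling constant, hence only on $Q$. Balls of radius up to $1$ centered in $\mathcal B(x,1)$ with $4$-fold dilates inside $\mathcal B(x,4)$ suffice, since $d_Q$ is a metric and the triangle inequality applies.

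\textbf{Main obstacle.} The delicate point is Step 1 together with the exponent bookkeeping in Step 2. The cutoff is only $W^{1,Q}$-controlled, which forces the loss from $p$ to $s>p$, and we must still find $q<p$ with $s<q^*$, all uniformly in $p$. A second delicate point is checking that the $d_Q$-balls are uniformly doubling, so that the metric-space Gehring lemma applies with constants depending only on $Q$. If the dilation factor in the Sobolev inequality (from $r$ to $2r$) compounds with the cutoff's factor, we simply work with $\mathcal B(y,r)\subset\mathcal B(y,4r)\subset\mathcal B(x,8)$, which is why the hypothesis is stated on $\mathcal B_{Q,g}(x,8)$.
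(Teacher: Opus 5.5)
Your proposal is correct and follows essentially the same route as the paper. The paper also tests the equation against $\varphi^p(u-c)$ using the $W^{1,Q}$ cutoff of Proposition \ref{cutoff}, applies H\"older with $s=Qp/(Q-p)$, and chooses $q<p<s<q^*$ (it takes $3s/(3+s)<q<p$, which is equivalent to your condition). It then combines the uniform Sobolev inequality with the volume growth $r^{3Q/(Q-3)}$ to get a reverse H\"older inequality and finishes with Gehring's lemma on the doubling space of $d_Q$-balls. The radius bookkeeping at the end needs tidying, though none of it is substantive: balls centered in $\mathcal B_{Q,g}(x,1)$ with radius up to $1$ only have their $4$-fold dilates in $\mathcal B_{Q,g}(x,5)$, and applying the Sobolev inequality on $\mathcal B_{Q,g}(y,2r)$ requires $2r\le 1$. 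Both points are fixed by working with smaller radii and a uniformly finite covering.
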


\begin{proof}
Fix any ball $\mathcal B_{Q,g}(y,4r) \subset \mathcal B_{Q,g}(x,8)$. Let $\varphi$ be the good cutoff function on $\mathcal B_{Q,g}(y,2r)$ constructed in Proposition \ref{cutoff}. We test the $p$-harmonic equation with the function $\eta = \varphi^p (u-c)$ where $c$ is a fixed constant. Then applying standard arguments we get the Caccioppoli inequality 
\[
\int_{\mathcal B_{Q,g}(y,r)} \vert \grad u\vert^p\, dv_g \le \int_{\mathcal B_{Q,g}(y,2r)} \vert u - c\vert^p \vert \grad \varphi\vert^p\, dv_g. 
\]
Define 
\[
s = \frac{Qp}{Q-p}
\]
so that 
\[
\frac{ps}{s-p} = Q. 
\]
We note that $p < s$. In fact, it is possible to choose $q$ so that $q < p < s < q^*$. Indeed, set 
\[
t = \frac{3s}{3+s} 
\]
so that $t^* = s$. Then it suffices to note that $Q > 3$ implies $t < p$ and so any choice $t < q < p$ will do.  Now we apply H\"older's inequality using 
\[
\frac{p}{s} +\frac{s-p}{s} = 1
\]
to get 
\[
\int_{\mathcal B_{Q,g}(y,2r)} \vert u-c\vert^p \vert \grad \varphi\vert^p\, dv_g \le \left[\int_{\mathcal B_{Q,g}(y,2r)} \vert u-c\vert^s\, dv_g\right]^{p/s} \left[\int_{\mathcal B_{Q,g}(y,2r)} \vert \grad \varphi\vert^{Q} \, dv_g\right]^{p/Q}.
\]
Combined with the previous estimate and the properties of the good test function, we get 
\[
\left[\int_{\mathcal B_{!,g}(y,r)} \vert \grad u\vert^p\, dv_g\right]^{1/p}\le \frac{C}{r}\left[\int_{\mathcal B_{Q,g}(y,2r)} \vert u-c\vert^s\, dv_g\right]^{1/s}.
\]
Now take the infimum over all constants $c$ and apply the uniform Sobolev inequality to get 
\[
\left[\int_{\mathcal B_{Q,g}(y,r)} \vert \grad u\vert^p\, dv_g\right]^{1/p} \le C r^{\frac{Q}{Q-3}\left[1 + \frac 3 s - \frac 3 q\right]-1} \left[\int_{\mathcal B_{Q,g}(y,4r)} \vert \grad u\vert^q \, dv_g\right]^{1/q}. 
\]
Finally, note that the volume $\vert\mathcal B_{Q,g}(y,r)\vert$ is uniformly comparable to $r^{3Q/(Q-3)}$. Hence the above inequality can be simplified to give the reverse H\"older inequality 
\begin{align*}
\left[\frac{1}{\vert \mathcal B_{Q,g}(y,r)\vert} \int_{\mathcal B_{Q,g}(y,r)} \vert \grad u\vert^p\, dv_g\right]^{1/p} \le C \left[\frac{1}{\vert \mathcal B_{Q,g}(y,4r)\vert}\int_{\mathcal B_{Q,g}(y,4r)} \vert \grad u\vert^q\, dv_g\right]^{1/q}. 
\end{align*} 
The result now follows from Gehring's lemma since $d_Q$ is doubling up to scale one; see \cite{maasalo2007gehring} and the proof of Lemma 8.13 in \cite{lee2023dp}. Since the numbers $p$, $q$, and $s$ can be chosen in some uniform compact ranges, the doubling constant is uniform, and the reverse H\"older constant is uniform, it follows that the constants $C$ and $\eps$ will depend only on $Q$. 
\end{proof}

We can now proceed with the proof of Theorem \ref{main-lp}.  
 Assume for contradiction that there is a point $y\in M$ such that $R_g(y) < 0$. Let $\an = B(0,16) - B(0,1/2)$ and let $x$ be a geodesic normal coordinate system around $y$ in the $g$ metric.  Let $\eps$ be the improvement of integrability constant from Proposition \ref{improvement} applied with $Q = 10$ and choose $p < 3$ so that $p+\eps > 3$.   Considering $g_r(x) = g(rx)$ and $g_{k,r}(x) = g_k(rx)$ for a sufficiently small $r > 0$, we can suppose as before that $D_p(1) > 2^a D_p(2)$ where $D_p$ is the function associated to the $p$-harmonic function $u_r$ in the $g_r$ metric which equals $\vert x\vert^{-a}$ on the boundary of $\an$.  We can further shrink $r$ so that $g_r$ is smoothly close to $g_{euc}$ on $B(0,100)$. 
In the rest of the proof, we drop the subscript $r$ and write $g$ and $g_k$ instead of $g_r$ and $g_{k,r}$. 
Let $u_k$ be the $p$-harmonic function in the $g_k$ metric which is equal to $\vert x\vert^{-a}$ on the boundary of $\an$. Then since $g_k$ has non-negative scalar curvature, we have $D_{p,k}(1) \le 2^a D_{p,k}(2)$ for all $k$. Therefore, as before, to get a contradiction it suffices to show that $D_{p,k}(1)\to D_p(1)$ and $D_{p,k}(2)\to D_p(2)$. 

Thus we first aim to show that $u_k\to u$ in a suitable sense. We identify $g$ and $g_k$ with metrics on $B(0,100)$. We note that the $L^P$ convergence implies that  
\begin{gather*}
\int_{B(0,100)} \vert g-g_k\vert^P_{h}\, dv_h \to 0, \quad \int_{B(0,100)} \vert g^{-1}-g_k^{-1}\vert^P_{h}\, dv_h \to 0,\\
\int_{B(0,100)} \vert g g_k^{-1}-I\vert^P_{h} \, dv_h\to 0, \quad \int_{B(0,100)} \vert g_k g^{-1} - I\vert^P_{h}\, dv_h \to 0
\end{gather*}
for any fixed smooth reference metric $h$. 

\begin{prop}
\label{comparison}
Given any $\kappa> 1$, one can choose $P$ sufficiently large so that 
\begin{gather*}
C^{-1} \|f\|_{L^{q/\kappa}(g_k)} \le \|f\|_{L^q(g)}\le C \|f \|_{L^{\kappa q}(g_k)},\\
C^{-1} \|\grad^{g_k} f\|_{L^{q/\kappa}(g_k)} \le \|\grad^g f\|_{L^q(g)}\le C \|\grad^{g_k} f \|_{L^{\kappa q}(g_k)}
\end{gather*}
for all Sobolev $f$ and all $q\in [2,6]$.  Here $C > 1$ is a constant that does not depend on $f$, $q$, or  $k$. 
\end{prop}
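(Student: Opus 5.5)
The plan is to reduce both lines of the statement to pointwise comparisons of the densities and the gradient norms, and then apply H\"older's inequality. The $L^P$ convergence enters only through a uniform bound on the comparison matrices. Write $E_k = g_k g^{-1}$ and $F_k = g g_k^{-1}$ as endomorphism fields on $B(0,100)$. Since $g$ is smooth and bounded above and below there, $L^P$ convergence of $g_k\to g$ and $g_k^{-1}\to g^{-1}$ gives uniform bounds
\[
\int_{B(0,100)} \left(\vert E_k\vert^P + \vert F_k\vert^P\right) dv_g \le C_0
\]
for all $k$, possibly after discarding finitely many $k$. The eigenvalues $\lambda_i$ of $g_k$ relative to $g$ satisfy $\lambda_i \le \vert E_k\vert$ and $\lambda_i^{-1} \le \vert F_k\vert$.

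The pointwise relations are as follows. For the volume forms, $dv_{g_k} = \sqrt{\det E_k}\, dv_g$, so
\[
w_k := \frac{dv_{g_k}}{dv_g} \le \vert E_k\vert^{3/2}, \qquad w_k^{-1} \le \vert F_k\vert^{3/2}.
\]
For the gradients, $\vert \grad^g f\vert_g^2 = g^{ij} f_i f_j$ and $\vert \grad^{g_k} f\vert_{g_k}^2 = g_k^{ij} f_i f_j$. This gives
\[
\vert \grad^g f\vert_g \le \vert E_k\vert^{1/2}\,\vert \grad^{g_k} f\vert_{g_k}, \qquad \vert \grad^{g_k} f\vert_{g_k} \le \vert F_k\vert^{1/2}\,\vert \grad^g f\vert_g .
\]
So all four inequalities take the form $\|h\|_{L^q(\mu)} \le \|W h'\|_{L^q(\mu')}$, where $h'\le W h$ pointwise and $W$ is a power of $\vert E_k\vert$ or $\vert F_k\vert$ whose exponent is at most $1/2 + 3/(2q)$.

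Next I would prove the right-hand inequalities with H\"older's inequality. For the function version,
\[
\int \vert f\vert^q\, dv_g = \int \vert f\vert^q w_k^{-1}\, dv_{g_k} \le \left(\int \vert f\vert^{\kappa q} dv_{g_k}\right)^{1/\kappa}\left(\int w_k^{-\kappa'}\, dv_{g_k}\right)^{1/\kappa'},
\]
where $\kappa' = \kappa/(\kappa-1)$. The last factor equals $\int w_k^{1-\kappa'}\, dv_g \le \int \vert F_k\vert^{\frac32(\kappa'-1)}\, dv_g$. This is uniformly bounded once $P \ge \tfrac32(\kappa'-1)$. For the gradient version, the same computation applies with the extra pointwise factor $\vert E_k\vert^{q/2}$. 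This leads to the integral
\[
\int \left(\vert E_k\vert^{q/2} w_k^{-1}\right)^{\kappa'} dv_{g_k} \le \int \vert E_k\vert^{\frac{q\kappa'}{2}}\,\vert F_k\vert^{\frac32(\kappa'-1)}\, dv_g .
\]
By Cauchy--Schwarz, this is bounded as long as $P \ge \max\{q\kappa',\, 3(\kappa'-1)\}$. Since $q \le 6$, it suffices to take $P \ge 6\kappa'$.

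The left-hand inequalities are proved symmetrically, with the roles of $g$ and $g_k$ exchanged. I would write
\[
\int \vert f\vert^{q/\kappa}\, dv_{g_k} = \int \vert f\vert^{q/\kappa} w_k\, dv_g
\]
and apply H\"older with exponents $\kappa$ and $\kappa'$. The weight then appears as $\int w_k^{\kappa'} dv_g \le \int \vert E_k\vert^{\frac32\kappa'}\, dv_g$. For the gradient version there is an additional factor $\vert F_k\vert^{q/(2\kappa)\cdot \kappa'}$, which again needs only $P \gtrsim 6\kappa'$.

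All the resulting constants are of the form $C_0^{1/\kappa'}$ raised to powers bounded in terms of $q\in[2,6]$. They are therefore uniform in $f$, $q$ and $k$. The integrals are taken over the relevant subdomain of $B(0,100)$.

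The main obstacle is bookkeeping rather than anything conceptual. The endomorphism bounds must be taken with respect to the fixed smooth metric $g$, which is uniformly equivalent to any reference metric $h$ on $B(0,100)$. The H\"older exponents must then be arranged so that the required integrability $P$ depends only on $\kappa$ and not on $q$. This works because $q$ ranges over the compact interval $[2,6]$. Taking $P = 10\kappa/(\kappa-1)$ should suffice.
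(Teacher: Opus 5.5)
Your proposal is correct and is essentially the paper's argument: you bound the volume-density ratio and the gradient-norm ratio pointwise by powers of the metric comparison matrices, then apply H\"older's inequality with exponents $\kappa$ and $\kappa/(\kappa-1)$, taking $P$ large depending only on $\kappa$. The only difference is bookkeeping. The paper works against the Euclidean background using $\det A \le C\vert A\vert^{3/2}$ and the operator-norm bound $\vert g_k\vert\,\vert g^{-1}\vert$, whereas you use the relative endomorphisms $E_k = g_k g^{-1}$ and $F_k = g g_k^{-1}$, which lets you make the threshold (e.g.\ $P \ge 6\kappa/(\kappa-1)$) and the uniformity in $q\in[2,6]$ explicit.
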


\begin{proof} 
First we study the volume forms.  Take $h$ to be the background Euclidean metric and write the metrics $g_k$ and $g$ in coordinates.  In terms of the Frobenius norm
\[
\vert A\vert = (\lambda_1^2 + \lambda_2^2 + \lambda_3^2)^{1/2}
\]
on positive definite symmetric matrices, one has the elementary inequality 
\[
 \det A \le C \vert A\vert^{3/2}. 
\]
Hence one has 
\[
\int (\det g_k)^{2P/3} \, dx \le C \int \vert g_k\vert^{P} \, dx \le C
\]
since $g_k$ is uniformly bounded in $L^P$.
Similarly, $\det g$, $\det g_k^{-1}$, and $\det g^{-1}$ all belong to $L^{2P/3}$ with uniform bounds. 

Now consider a Sobolev function $f$.  Observe that 
\begin{align*}
\left[\int \vert f\vert^q\, dv_g\right]^{1/q} &= \left[\int \vert f\vert^q\, \frac{\sqrt{\det g}}{\sqrt{\det g_k}}  dv_{g_k}\right]^{1/q} \\
&\le \left[\int \vert f\vert^{\kappa q} \, dv_{g_k}\right]^{1/(\kappa q)} \left[ \int \left(\frac{\sqrt{ \det g}}{\sqrt{\det g_k}}\right)^{\frac{\kappa}{\kappa-1}}\, dv_{g_k}\right]^{\frac{\kappa-1}{\kappa q}}.
\end{align*}
Next, note that 
\begin{align*}
\int \left(\frac{\sqrt{ \det g}}{\sqrt{\det g_k}}\right)^{\frac{\kappa}{\kappa-1}}\, dv_{g_k} = \int (\sqrt{\det g})^{\frac{\kappa}{\kappa-1}} (\sqrt{\det g_k^{-1}})^{\frac{1}{\kappa-1}}\, dx.
\end{align*} 
By H\"older's inequality, the right hand side is uniformly bounded by a constant $C$ provided $P$ is large enough depending on $\kappa$. Thus we have obtained 
\[
\|f\|_{L^q(g)} \le C \|f\|_{L^{\kappa q}(g_k)}.
\]
The inequality 
\[
\|f\|_{L^{q/\kappa}(g_k)} \le C \|f\|_{L^q(g)}
\]
is obtained similarly. 

It remains to study the gradients.  With respect to the background Euclidean metric, one has 
\[
\vert \grad^{g_k} f\vert_{g_k}^2 = \la g_k^{-1} \grad f, \grad f\ra
\]
and so 
\[
\frac{1}{\vert g_k\vert} \vert \grad f\vert^2 \le \vert \grad^{g_k} f\vert_{g_k}^2 \le \vert g_k^{-1}\vert \vert \grad f\vert^2
\]
where we are now using the operator norm. 
Similar bounds hold for $g$ and it follows that 
\[
\vert \grad^g f\vert^2_g \le \vert g^{-1}\vert \vert \grad f\vert^2 \le \vert g_k\vert \vert g^{-1}\vert \vert \grad^{g_k} f\vert_{g_k}^2. 
\]
Hence we have 
\begin{align*}
\left[\int \vert \grad^g f\vert_g^q \, dv_g\right]^{1/q} \le \left[\int \vert \grad^{g_k} f\vert_{g_k}^q \vert g_k\vert^{q/2} \vert g^{-1}\vert^{q/2} \frac{\sqrt{\det g}}{\sqrt{\det g_k}}\, dv_{g_k}\right]^{1/q}. 
\end{align*}
As above, the inequality 
\[
\|\grad^g f\|_{L^q(g)} \le C \|\grad^{g_k} f\|_{L^{\kappa q}(g_k)} 
\]
now follows by applying H\"older's inequality provided $P$ is large enough depending on $\kappa$. The reverse inequality 
\[
\|\grad^{g_k} f\|_{L^{q\kappa}(g_k)}  \le C \|\grad^g f\|_{L^q(g)}
\]
is obtained similarly. 
\end{proof}

We now aim to demonstrate the convergence of $u_k$ to $u$.  Recall that $\eps > 0$ is the improvement of integrability constant supplied by Proposition \ref{improvement} with $Q = 10$.  Fix $\kappa > 1$ for which 
\[
p\kappa < 3 < \frac{p+\eps}{\kappa}
\]
 and then choose $P$ sufficiently large according to the previous proposition.  Recall that $\an(1) = B(0,4) - B(0,1)$. 

\begin{prop}
The quantity 
\[
\int_{\an} \vert \grad^{g_k} u_k\vert^p\, dv_k
\]
is uniformly bounded. 
\end{prop}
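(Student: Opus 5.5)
We use the variational characterization of $u_k$. Since $u_k$ is $p$-harmonic in the $g_k$ metric with boundary values $\vert x\vert^{-a}$ on $\bd \an$, it minimizes the $p$-energy
\[
\int_{\an} \vert \grad^{g_k} w\vert_{g_k}^p\, dv_{g_k}
\]
among all $W^{1,p}$ functions $w$ with $w = \vert x\vert^{-a}$ on $\bd\an$. Because the $p$-energy is strictly convex, the minimizer is the unique weak solution, so this characterization is legitimate. It therefore suffices to find one competitor whose $g_k$-energy is bounded independently of $k$. We take the fixed smooth function $w = u_0 = \vert x\vert^{-a}$ (or equally well the limit $u$ itself). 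It is smooth on $\cl{\an}$, with $\vert \grad u_0\vert$ and $u_0$ bounded above and below by constants depending only on $p$.

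To bound the $g_k$-energy of $u_0$, we work in coordinates with the Euclidean background $h$. Pointwise,
\[
\vert \grad^{g_k} u_0\vert_{g_k}^p \sqrt{\det g_k} \le \vert g_k^{-1}\vert^{p/2} \vert \grad u_0\vert^p \sqrt{\det g_k} \le C\, \vert g_k^{-1}\vert^{p/2} \vert g_k\vert^{3/4},
\]
using the inequality $\det A \le C\vert A\vert^{3/2}$ from the proof of Proposition \ref{comparison}. We then integrate over $\an \subset B(0,100)$ and apply H\"older's inequality with exponents $2$ and $2$:
\[
\int_{\an} \vert g_k^{-1}\vert^{p/2}\vert g_k\vert^{3/4}\, dx \le \Big(\int \vert g_k^{-1}\vert^{p}\, dx\Big)^{1/2}\Big(\int \vert g_k\vert^{3/2}\, dx\Big)^{1/2}.
\]
Both factors are uniformly bounded as soon as $P \ge p$, hence certainly for $P\ge 3$. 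Indeed, $g_k \to g$ and $g_k^{-1}\to g^{-1}$ in $L^P$ on $B(0,100)$, and $g$, $g^{-1}$ are smooth, so $g_k$ and $g_k^{-1}$ are bounded in $L^P$. This gives
\[
\int_{\an} \vert \grad^{g_k} u_k\vert^p\, dv_{g_k} \le \int_{\an} \vert \grad^{g_k} u_0\vert^p\, dv_{g_k} \le C,
\]
uniformly in $k$. Alternatively, one could invoke Proposition \ref{comparison} directly with $f = u_0$ to bound $\|\grad^{g_k} u_0\|_{L^p(g_k)}$ by $C\|\grad^g u_0\|_{L^{\kappa p}(g)}$. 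That route needs $q=p\in[2,6]$, which holds, after a harmless adjustment of the exponent bookkeeping.

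The only points needing care are the existence and uniqueness of the minimizer and its identification with $u_k$. Since each $g_k$ is smooth and hence uniformly elliptic for fixed $k$, this is standard, so we expect no real obstacle. The one thing to confirm is that the $L^P$ bounds on $g_k$ and $g_k^{-1}$ hold on all of $\an$ after the rescaling $g_{k,r}(x)=g_k(rx)$. This is true because rescaling only multiplies the $L^P$ norms by a fixed power of $r$, and $r$ is fixed.
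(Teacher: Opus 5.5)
Your proof is correct and is essentially the paper's argument: both compare $u_k$, via its $p$-energy minimizing property, with a fixed smooth competitor having the same boundary values, and then bound that competitor's $g_k$-energy using the $L^P$ bounds on $g_k$ and $g_k^{-1}$. The paper takes the competitor $u$ and invokes Proposition \ref{comparison}, which needs $u\in C^{1,\alpha}(\cl\an)$; you take $\vert x\vert^{-a}$ and do the H\"older step by hand, which avoids that regularity input. One small correction: the inequality $\det A\le C\vert A\vert^{3/2}$ you borrowed is false (take $A=tI$, $t\to\infty$). The correct bound is $\sqrt{\det A}\le \vert A\vert^{3/2}$, so your second H\"older factor becomes $\int\vert g_k\vert^{3}\,dx$. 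This requires $P\ge 3$ rather than $P\ge p$, which is harmless here since $P>3$.
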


\begin{proof}
By the energy minimizing property of $p$-harmonic functions and Proposition \ref{comparison}, we have 
\[
\left[\int_{\an} \vert \grad^{g_k} u_k\vert^p\, dv_{g_k}\right]^{1/p} \le \left[\int_{\an} \vert \grad^{g_k} u\vert^p\, dv_{g_k}\right]^{1/p} \le C \left[\int_{\an} \vert \grad^g u\vert^{p\kappa} \, dv_g\right]^{1/(p\kappa)}. 
\]
Now $g$ is a smooth metric, so the $p$-harmonic function $u$ belongs to $C^{1,\alpha}(\cl \an)$. In particular, the quantity on the far right hand side above is finite. 
\end{proof} 

\begin{prop}
There is some $q > 3$ for which 
\[
\int_{\an(1)} \vert \grad u_k\vert^q\, dx
\]
is uniformly bounded. 
\end{prop}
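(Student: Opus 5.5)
We want a uniform bound on $\int_{\an(1)}|\grad u_k|^q\,dx$ for some $q>3$. The plan is to combine the improvement of integrability in the $g_k$ metric (Proposition \ref{improvement} with $Q=10$) with the comparison Proposition \ref{comparison}, which moves integrals between $g_k$ and $g$. Since $g$ is smooth and close to Euclidean on $B(0,100)$, norms with respect to $g$ and with respect to the Euclidean metric are comparable. So it suffices to bound $\|\grad^g u_k\|_{L^q(g)}$ on $\an(1)$ for some $q>3$.

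\textbf{Step 1: cover $\an(1)$ by $d_{10,g_k}$-balls.} First I will show that $\cl{\an(1)}$ can be covered by finitely many balls $\mathcal B_{10,g_k}(y_j,r)$, with the number of balls and the radius $r$ independent of $k$, such that $\mathcal B_{10,g_k}(y_j,8r)\subset \an$. For this I would use the $\eps$-regularity and $L^Q$-coefficient control from \cite{lee2023dp}, exactly as in the proof of the uniform Sobolev inequality. These should give that, for $\delta$ small, the $d_{10,g_k}$-balls are nested between Euclidean-scale $d_{10}$-balls at comparable radii, uniformly in $k$. I also need to know that $R(g_k)\ge 0$ and the entropy bound survive the rescaling $g_{k,r}$; they do, because both conditions improve when zooming in. I expect this covering step to be the main obstacle: we need the $d_{10,g_k}$-balls neither to leak out of $\an$ nor to become too small, uniformly in $k$. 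If a direct citation is unavailable, I would rescale so that the balls have radius $1$ and invoke \cite[Theorem 1.7]{lee2023dp} at each center.

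\textbf{Step 2: apply Proposition \ref{improvement} in the $g_k$ metric.} On each ball of the cover, after rescaling to unit scale, Proposition \ref{improvement} applies to the $p$-harmonic function $u_k$. Summing over the finitely many balls gives
\[
\|\grad^{g_k}u_k\|_{L^{p+\eps}(\an(1),g_k)}\le C\,\|\grad^{g_k}u_k\|_{L^{p}(\an,g_k)}.
\]
The right-hand side is uniformly bounded by the previous proposition.

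\textbf{Step 3: transfer to $g$.} Now apply Proposition \ref{comparison} on $\an(1)$ with exponent $q=(p+\eps)/\kappa$. I should first check that this $q$ lies in $[2,6]$, or else extend that proposition, which only uses H\"older's inequality. This gives
\[
\|\grad^g u_k\|_{L^{(p+\eps)/\kappa}(g)}\le C\,\|\grad^{g_k}u_k\|_{L^{p+\eps}(g_k)}.
\]
By the choice of $\kappa$ we have $(p+\eps)/\kappa>3$, so this is exactly the exponent we want. Finally, since $g$ is smoothly close to Euclidean, this bound converts to a uniform bound on $\int_{\an(1)}|\grad u_k|^{q}\,dx$ with $q=(p+\eps)/\kappa>3$.
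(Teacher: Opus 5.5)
Your proposal is correct and follows essentially the same route as the paper. You cover $\an(1)$ by a uniformly bounded number of $d_{10}$-balls in the $g_k$ metric, apply Proposition \ref{improvement} in $g_k$ together with the uniform $L^p$ energy bound, and then transfer to $g$, hence to Euclidean, via Proposition \ref{comparison} with $q=(p+\eps)/\kappa>3$. For the covering step, the paper simply uses the uniform lower volume bound on unit $d_Q$-balls and the Vitali covering lemma, and never checks that the dilated balls stay inside $\an$. You handle that containment correctly by shrinking the radius and rescaling.
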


\begin{proof}  By Proposition \ref{comparison}, we have 
\[
\left[\int_{\an(1)} \vert \grad^{g} u_k\vert^{(p+\eps)/\kappa}\, dv_{g}\right]^{\kappa/(p+\eps)}\le C\left[\int_{\an(1)} \vert \grad^{g_k} u_k\vert^{p+\eps}\, dv_{g_k}\right]^{1/(p+\eps)}.
\]
Recall that $Q = 10$.  Note that there is a uniform lower bound on the volume of balls $\vert \mathcal B_{Q,g_k}(y,1)\vert$. Hence the Vitali covering lemma implies that $\an(1)$ can be covered by a uniform number of unit $d_Q$-balls in the $g_k$ metric. 
Thus applying the improvement of integrability in the $g_k$ metric and using the previous proposition gives 
\[
\left[\int_{\an(1)} \vert \grad^{g_k} u_k\vert^{p+\eps}\, dv_{g_k}\right]^{1/(p+\eps)} \le C \left[\int_{\an} \vert \grad^{g_k} u_k\vert^p\, dv_{g_k}\right]^{1/p} \le C. 
\]
Combining everything, we deduce that 
\[
\left[\int_{\an(1)} \vert \grad^{g} u_k\vert^{(p+\eps)/\kappa}\, dv_{g}\right]^{\kappa/(p+\eps)} \le C 
\]
for a uniform constant $C$. Since $g$ is smoothly close to Euclidean, the result now follows with $q = (p+\eps)/\kappa > 3$. 
\end{proof}

\begin{prop}
The functions $u_k$ converge strongly to $u$ in $W^{1,s}(\an)$ for some $s < p$. 
\end{prop}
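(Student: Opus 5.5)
We need to prove that $u_k \to u$ strongly in $W^{1,s}(\an)$ for some $s<p$. The metrics are no longer uniformly comparable, so the monotonicity argument of the convergence-in-measure section has to be run with H\"older losses controlled by Proposition \ref{comparison}. Everything is phrased in terms of the Euclidean coordinate quantities
\[
A(x,\xi) = \sqrt{\det g}\,(g^{\ell m}\xi_\ell\xi_m)^{\frac{p-2}{2}} g^{ij}\xi_i\bd_j,
\qquad
A_k(x,\xi) = \sqrt{\det g_k}\,(g_k^{\ell m}\xi_\ell\xi_m)^{\frac{p-2}{2}} g_k^{ij}\xi_i\bd_j .
\]
Since $u_k-u\in W^{1,p}_0$ (using the previous proposition together with Proposition \ref{comparison}), we may test the equations for $u$ and $u_k$ against $u_k-u$. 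This gives
\[
\int_{\an}\big[A_k(x,\grad u_k)-A_k(x,\grad u)\big]\cdot\grad(u_k-u)\,dx=\int_{\an}\big[A(x,\grad u)-A_k(x,\grad u)\big]\cdot\grad(u_k-u)\,dx .
\]

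\textbf{Step 1: the right-hand side tends to zero.} Write $A(x,\grad u)-A_k(x,\grad u)=E_k\,\grad u$. Since $u\in C^{1,\alpha}(\cl\an)$, the factor $\grad u$ is bounded, and $E_k$ is an algebraic expression in $g,g^{-1},g_k,g_k^{-1}$ together with $(\det g_k)^{1/2}$ and $(g_k^{-1}(\grad u,\grad u))^{(p-2)/2}$. Because $g_k\to g$ and $g_k^{-1}\to g^{-1}$ in $L^P$, we get $E_k\to 0$ in $L^{r}$ for some large $r$, provided $P$ is large. One has to be slightly careful with the nonlinear power, but since $0<(p-2)/2<1/2$ it is H\"older continuous in the metric coefficients.

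For the other factor, Proposition \ref{comparison} together with the previous proposition gives $\|\grad u_k\|_{L^{p/\kappa}(dx)}\le C$. Hence $\int |E_k\grad u|\,|\grad(u_k-u)|\,dx\to 0$ by H\"older's inequality, taking $r$ to be the dual exponent of $p/\kappa$.

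\textbf{Step 2: the left-hand side controls a weighted gradient difference, and this is the main obstacle.} Monotonicity of $A_k$ only gives the degenerate weighted bound
\[
\int \lambda_k\,|\grad u_k-\grad u|^p\,dx\le \text{LHS},
\]
where $\lambda_k$ is the smallest ellipticity factor, comparable to $\sqrt{\det g_k}\,\lambda_{\min}(g_k^{-1})^{p/2}$. This factor is not bounded below uniformly. To unweight, apply H\"older's inequality:
\[
\int|\grad(u_k-u)|^s\,dx\le\Big(\int\lambda_k|\grad(u_k-u)|^p\,dx\Big)^{s/p}\Big(\int\lambda_k^{-s/(p-s)}\,dx\Big)^{(p-s)/p}.
\]
The quantity $\lambda_k^{-1}$ is polynomial in the entries of $g_k$ and $g_k^{-1}$, so it lies in $L^{s/(p-s)}$ uniformly once $P$ is large relative to $s<p$. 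This is exactly why we only obtain $s<p$.

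\textbf{Step 3: conclusion.} Combining the steps gives $\grad u_k\to\grad u$ in $L^s(dx)$. The Poincar\'e inequality, using $u_k=u$ on $\bd\an$, then upgrades this to $u_k\to u$ in $W^{1,s}(\an)$. The delicate points are choosing $P$ large enough for all the H\"older exponents at once, and making sure the test function is admissible; the latter can be handled by approximation if necessary.
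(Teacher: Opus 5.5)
Your proposal is correct and follows essentially the paper's route. You test both equations with $u_k-u$, use the monotonicity of $A_k$ on the left, and use the $L^P$ convergence of $g_k^{\pm1}$ (with $\grad u$ bounded and the power $(p-2)/2$ handled by H\"older continuity) to make $A(x,\grad u)-A_k(x,\grad u)$ small on the right. The differences are minor: you use the uniform energy bound to show the right-hand side tends to zero and unweight by H\"older against $\lambda_k^{-1}$, whereas the paper absorbs the factor $\|\grad(u_k-u)\|_{L^{p/\kappa}}$ and unweights via Proposition \ref{comparison}; also, admissibility of $u_k-u$ holds simply because each $g_k$ is smooth, so $u_k\in C^{1,\alpha}(\cl\an)$.
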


\begin{proof}
We note that for all vectors $\xi$ and $\eta$ one has 
\[
g_k(\vert \xi\vert^{p-2}_{g_k} \xi - \vert \eta\vert^{p-2}_{g_k} \eta, \xi-\eta) \ge 2^{2-p} \vert \xi - \eta\vert_{g_k}^p
\]
and likewise for $g$. By the $p$-harmonic equation, we have 
\[
\int  g_k(\vert \grad^{g_k} u_k\vert^{p-2}_{g_k} \grad^{g_k} u_k , \grad^{g_k}\eta)\, dv_k = 0, \quad \int  g(\vert\grad^{g} u\vert^{p-2}_{g} \grad^{g} u , \grad^{g}\eta)\, dv = 0 
\]
for all smooth, compactly supported $\eta$. This is equivalent to 
\[
\int \mathcal A_k(x,\grad u_k) \cdot \grad \eta \, dx = 0, \quad \int \mathcal A(x,\grad u) \cdot \grad \eta \, dx = 0
\]
where 
\[
A_k(x,\xi) = \sqrt{\det g_k} (g_k^{\ell m}\xi_\ell\xi_m)^{\frac{p-2}{2}} g_k^{ij} \xi_i \bd_j, \quad \mathcal A(x,\xi) = \sqrt{\det g} (g^{\ell m}\xi_\ell\xi_m)^{\frac{p-2}{2}} g^{ij} \xi_i \bd_j. 
\]
We test this with $\eta =  u_k -  u$ to get 
\begin{align*}
&\int (\mathcal A(x,\grad u) - \mathcal A_k(x,\grad u))\cdot (\grad u_k - \grad u)\, dx \\
&\qquad = \int (\mathcal A_k(x,\grad u_k)-\mathcal A_k(x,\grad u))\cdot (\grad u_k-\grad u)\, dx\\
&\qquad = \int g_k(\vert \grad^{g_k} u_k\vert^{p-2}_{g_k} \grad^{g_k} u_k - \vert \grad^{g_k} u\vert^{p-2}_{g_k} \grad^{g_k} u, \grad^{g_k}u_k - \grad^{g_k} u)\, dv_k \\
&\qquad \ge C \int \vert \grad^{g_k} u_k - \grad^{g_k} u \vert_{g_k}^p\, dv_k.
\end{align*} 
Then by Proposition \ref{comparison} and the fact that $g$ is smoothly close to Euclidean, we have 
\[
\int \vert \grad^{g_k} u_k - \grad^{g_k} u \vert_{g_k}^p\, dv_k \ge C\left[ \int \vert \grad u_k - \grad u\vert^{p/\kappa}\, dx\right]^\kappa.
\]
On the other hand, H\"older's inequality gives 
\begin{align*} 
&\int (\mathcal A(x,\grad u) - \mathcal A_k(x,\grad u))\cdot (\grad u_k - \grad u)\, dx\\
&\qquad \le \left[\int \vert \mathcal A(x,\grad u) - \mathcal A_k(x,\grad u)\vert^q \, dx \right]^{1/q} \left[\int \vert \grad u_k - \grad u\vert^{p/\kappa} \, dx\right]^{\kappa/p}
\end{align*} 
where 
\[
q = \frac{p}{p-\kappa}. 
\]
Combined with the previous inequality, this yields 
\[
\left[\int \vert \grad u_k - \grad u\vert^{p/\kappa}\, dx \right]^{\frac{\kappa(p-1)}{p}} \le \left[\int \vert \mathcal A(x,\grad u) - \mathcal A_k(x,\grad u)\vert^q\, dx\right]^{1/q}. 
\]
Hence to complete the proof it suffices to show that 
\[
\left[\int \vert \mathcal A(x,\grad u) - \mathcal A_k(x,\grad u)\vert^q \, dx\right]^{1/q}  \to 0
\]
as $k\to \infty$.  Observe that 
\begin{align*} 
&\left[\int \vert \mathcal A(x,\grad u) - \mathcal A_k(x,\grad u)\vert^q \, dx\right]^{1/q} \\
&\qquad =\left[ \int \vert \sqrt{\det g} \vert \grad^g u\vert^{p-2} \grad^g u - \sqrt{\det g_k} \vert \grad^{g_k} u\vert^{p-2} \grad^{g_k}u\vert^q\, dx\right]^{1/q} \\
&\qquad \le \left[\int \vert \sqrt{\det g} \vert \grad^g u\vert^{p-2} \grad^g u - \sqrt{\det g_k} \vert \grad^{g} u\vert^{p-2} \grad^{g}u\vert^q\, dx\right]^{1/q} \\
&\qquad \qquad + \left[\int \vert \sqrt{\det g_k} \vert \grad^g u\vert^{p-2} \grad^g u - \sqrt{\det g_k} \vert \grad^{g_k} u\vert^{p-2} \grad^{g_k}u\vert^q\, dx\right]^{1/q}.
\end{align*} 
For the first integral, note that $u$ is $C^1$ and so 
\begin{align*}
&\left[\int \vert \sqrt{\det g} \vert \grad^g u\vert^{p-2} \grad^g u - \sqrt{\det g_k} \vert \grad^{g} u\vert^{p-2} \grad^{g}u\vert^q\, dx\right]^{1/q} \\
&\qquad  \le C \left[\int \vert \sqrt{\det g} - \sqrt{\det g_k}\vert^q\, dx\right]^{1/q}.
\end{align*} 
The second line goes to 0 as $k\to \infty$ by the $L^P$ convergence of $g_k$ to $g$. 
 For the second integral, we have 
\begin{align*}
&\left[\int \vert \sqrt{\det g_k} \vert \grad^g u\vert^{p-2} \grad^g u - \sqrt{\det g_k} \vert \grad^{g_k} u\vert^{p-2} \grad^{g_k}u\vert^q\, dx\right]^{1/q}\\
&\qquad \le \left[\int (\sqrt{\det g_k})^{2q} dx\right]^{1/(2q)} \left[\int \vert  \vert \grad^g u\vert^{p-2} \grad^g u -  \vert \grad^{g_k} u\vert^{p-2} \grad^{g_k}u\vert^{2q}\, dx\right]^{1/(2q)}.
\end{align*}
The first term is bounded assuming $P$ is large enough. For the second, we can further estimate 
\begin{align*}
& \left[\int \vert  \vert \grad^g u\vert^{p-2} \grad^g u -  \vert \grad^{g_k} u\vert^{p-2} \grad^{g_k}u\vert^{2q}\, dx\right]^{1/(2q)}\\
&\qquad \le  \left[\int \vert  \vert \grad^g u\vert^{p-2} \grad^g u -  \vert \grad^{g_k} u\vert^{p-2} \grad^{g}u\vert^{2q}\, dx\right]^{1/(2q)}\\
&\qquad\qquad +\left[\int \vert  \vert \grad^{g_k} u\vert^{p-2} \grad^g u -  \vert \grad^{g_k} u\vert^{p-2} \grad^{g_k}u\vert^{2q}\, dx\right]^{1/(2q)}.
\end{align*} 
The first integral satisfies 
\[
\int \vert  \vert \grad^g u\vert^{p-2} \grad^g u -  \vert \grad^{g_k} u\vert^{p-2} \grad^{g}u\vert^{2q}\, dx\le C \int \vert  \vert \grad^g u\vert^{p-2}  -  \vert \grad^{g_k} u\vert^{p-2} \vert^{2q}\, dx.
\]
Now observe that 
\[
\vert \vert\grad^g u\vert^2_g - \vert \grad^{g_k} u\vert^2_{g_k}\vert = \vert \la (g^{-1}-g_k^{-1}) \grad u,\grad u\ra\vert \le \vert g^{-1} - g_k^{-1}\vert \vert \grad u\vert^2, 
\]
where we have used the operator norm. Since $\frac{p-2}{2} < 1$, we can use the elementary inequality 
\[
\vert a^{\frac{p-2}{2}} - b^{\frac{p-2}{2}}\vert \le \vert a-b\vert^{\frac{p-2}{2}} 
\]
to get 
\[
\vert (\vert \grad^g u\vert_g^2)^{\frac{p-2}{2}} - (\vert \grad^{g_k} u\vert_{g_k}^2)^{\frac{p-2}{2}}\vert \le \vert \vert \grad^g u\vert_g^2-\vert \grad^{g_k} u\vert_{g_k}^2\vert^{\frac{p-2}{2}} 
\]
Combining everything, it then follows that 
\begin{align*}
\int \vert  \vert \grad^g u\vert^{p-2}  -  \vert \grad^{g_k} u\vert^{p-2} \vert^{2q}\, dx &\le \int (\vert g^{-1} - g_k^{-1}\vert \vert \grad u\vert^2)^{q(p-2)}\, dx\\
&\le C \int \vert g^{-1} - g_k^{-1}\vert^{q(p-2)} \, dx
\end{align*}
and this goes to 0 provided $P$ is large enough.  We've now argued that 
\[
\left[\int \vert  \vert \grad^g u\vert^{p-2} \grad^g u -  \vert \grad^{g_k} u\vert^{p-2} \grad^{g}u\vert^{2q}\, dx\right]^{1/(2q)} \to 0. 
\]
Finally, consider the second integral. It satisfies 
\begin{align*}
&\int \vert  \vert \grad^{g_k} u\vert^{p-2} \grad^g u -  \vert \grad^{g_k} u\vert^{p-2} \grad^{g_k}u\vert^{2q}\, dx \\
&\qquad = \int   \vert \grad^{g_k} u\vert^{2q(p-2)} \vert \grad^g u -  \grad^{g_k}u\vert^{2q}\, dx\\
&\qquad \le \left[\int \vert \grad^{g_k} u\vert^{4q(p-2)} \, dx\right]^{1/2} \left[\int \vert \grad^g u - \grad^{g_k} u \vert^{4q}\right]^{1/2}. 
\end{align*}
Using the bound $\vert \grad^{g_k} u\vert^2 \le \vert g_k^{-1}\vert \vert \grad u\vert^2$, it follows that the first term is bounded if $P$ is large enough. Finally, we have 
\[
\grad^g u -\grad^{g_k} u = (g^{-1} - g_k^{-1})\grad u 
\]
and so 
\[
\int \vert \grad^g u - \grad^{g_k} u \vert^{4q} \le \int \vert g^{-1} - g_k^{-1}\vert^{4q} \vert \grad u\vert^{4q}\,dx \le C \int \vert g^{-1} - g_{k}^{-1}\vert^{4q}\, dx
\]
will go to zero provided $P$ is large enough. Putting everything together, we have now demonstrated that 
\[
\left[\int \vert \mathcal A(x,\grad u) - \mathcal A_k(x,\grad u)\vert^q \, dx\right]^{1/q}  \to 0
\]
and the proof is complete. 
\end{proof}

Combined with the uniform energy bound, it now follows by interpolation that $u_k\to u$ strongly in $W^{1,q}$ for some $q > 3$. This implies that $u_k\to u$ in $C^0$.  Finally, it remains to use this to show the convergence of $D_{k,p}$ to $D_k$.  For simplicity, we will show that $D_{k,p}(1) \to D_{p}(1)$, noting that $D_{k,p}(2) \to D_{p}(2)$ for exactly the same reason. This will complete the proof of Theorem \ref{main-lp}. 

\begin{prop}
We have $D_{k,p}(1)\to D_p(1)$ as $k\to \infty$. 
\end{prop}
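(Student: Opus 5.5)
We follow the strategy used for the convergence-in-measure theorem, but now the metrics $g_k$ are controlled only in $L^P$, not uniformly bi-Lipschitz. Write both quantities as Euclidean integrals over $\an(1)$:
\[
D_{p,k}(1) - D_p(1) = \int_{\an(1)} \Big[\varphi(1,u_k)\sqrt{\det g_k}\,\big(g_k^{ij}\bd_i u_k \bd_j u_k\big)^{3/2} - \varphi(1,u)\sqrt{\det g}\,\big(g^{ij}\bd_i u\bd_j u\big)^{3/2}\Big]\, dx .
\]
We use three facts established above. First, $u_k\to u$ in $W^{1,q}(\an(1))$ for some $q>3$, with respect to the Euclidean structure. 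Second, this gives $u_k\to u$ in $C^0$. Third, since $u$ is bounded below on $\an(1)$ by a positive constant, so is $u_k$ for large $k$. Hence $\varphi(1,u_k)\to\varphi(1,u)$ uniformly, and $|\varphi(1,u_k)|\le C$.

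Split the difference into three pieces by telescoping:
\begin{gather*}
I_1=\int \big(\varphi(1,u_k)-\varphi(1,u)\big)\sqrt{\det g_k}\,|\grad^{g_k}u_k|_{g_k}^3\,dx,\\
I_2=\int \varphi(1,u)\sqrt{\det g_k}\,\big(|\grad^{g_k}u_k|_{g_k}^3-|\grad^{g_k}u|_{g_k}^3\big)\,dx,\\
I_3=\int \varphi(1,u)\big(\sqrt{\det g_k}\,|\grad^{g_k}u|_{g_k}^3-\sqrt{\det g}\,|\grad^{g}u|_{g}^3\big)\,dx.
\end{gather*}
Each piece is handled with H\"older's inequality, using $|\grad^{g_k}f|_{g_k}^2\le |g_k^{-1}|\,|\grad f|^2$ and the uniform $L^{2P/3}$ bounds on $\det g_k$ from the proof of Proposition \ref{comparison}.

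\textbf{Estimating $I_1$.} We have $|I_1|\le \|\varphi(1,u_k)-\varphi(1,u)\|_{C^0}\int \sqrt{\det g_k}\,|g_k^{-1}|^{3/2}|\grad u_k|^3\,dx$. Apply H\"older with the exponent $q/3>1$ on $|\grad u_k|^3$ and the dual exponent on $\sqrt{\det g_k}\,|g_k^{-1}|^{3/2}$. The second factor is uniformly bounded provided $P$ is large depending on $q$. The first factor is uniformly bounded by the $W^{1,q}$ bound. Since the $C^0$ factor tends to zero, $I_1\to 0$.

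\textbf{Estimating $I_2$.} Use $|\,|\xi|^3-|\eta|^3|\le 3(|\xi|^2+|\eta|^2)|\xi-\eta|$ in the $g_k$ norm. This gives
\[
|I_2|\le C\int \sqrt{\det g_k}\,|g_k^{-1}|^{3/2}\big(|\grad u_k|^2+|\grad u|^2\big)|\grad u_k-\grad u|\,dx .
\]
Apply H\"older with three exponents. Place $|\grad u_k|^2+|\grad u|^2$ in $L^{q/2}$ and $|\grad u_k-\grad u|$ in $L^{q}$; these are compatible because $3/q<1$, and the leftover exponent goes on the metric factor, which is bounded for $P$ large. The $L^q$ norm of $\grad u_k-\grad u$ tends to zero, so $I_2\to 0$.

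\textbf{Estimating $I_3$.} Here $u$ is $C^1$ on $\cl{\an}$, so $|\grad u|\le C$. The integrand is therefore bounded by $C\big|\sqrt{\det g_k}\,(g_k^{ij}\bd_iu\bd_ju)^{3/2}-\sqrt{\det g}\,(g^{ij}\bd_iu\bd_ju)^{3/2}\big|$. Split this once more: first control $|\sqrt{\det g_k}-\sqrt{\det g}|$ in $L^1$, then control $\sqrt{\det g_k}\,\big|(g_k^{-1}\grad u\cdot\grad u)^{3/2}-(g^{-1}\grad u\cdot\grad u)^{3/2}\big|$. For the latter use
\[
|a^{3/2}-b^{3/2}|\le C(a^{1/2}+b^{1/2})|a-b|, \qquad |a-b|\le |g_k^{-1}-g^{-1}|\,|\grad u|^2 .
\]
Applying H\"older, this reduces to $\|g_k^{-1}-g^{-1}\|_{L^{r}}\to 0$ for a moderate exponent $r$, times bounded factors, which holds once $P$ is large. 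The determinant term also tends to zero. The route is: $\det g_k\to\det g$ in measure, the family is uniformly integrable in $L^{2P/3}$, and then Vitali's theorem applies; alternatively, bound $|\det A-\det B|$ by $|A-B|(|A|+|B|)^2$ and use H\"older.

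The main obstacle is bookkeeping rather than conceptual. Every H\"older split must leave metric factors in exponents finite and below $P$. This is where ``$P$ sufficiently large'' enters, depending on $q$ (hence on $\eps$ and $p$). Nothing beyond the earlier results is needed.
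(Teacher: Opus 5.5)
Your proof is correct and is essentially the paper's argument: telescope the difference, use $\varphi(1,u_k)\to\varphi(1,u)$ uniformly together with strong convergence $\nabla u_k\to\nabla u$ in $L^q(\an(1))$ for some $q>3$, and absorb every metric factor by H\"older's inequality against uniform bounds on $g_k^{\pm 1}$ and $\sqrt{\det g_k}$ with $P$ large. (Since $\det A\le C|A|^3$ in the Frobenius norm, the uniform bound is really $\sqrt{\det g_k}\in L^{2P/3}$, not $\det g_k\in L^{2P/3}$; this is harmless.)

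The only difference is the order of telescoping. You change the metric on the fixed $C^1$ function $u$, so your $I_3$ only sees the bounded $\nabla u$, while $I_1,I_2$ carry the weight $\sqrt{\det g_k}\,|g_k^{-1}|^{3/2}$. The paper instead compares $u_k$ with $u$ in the fixed smooth metric $g$ and then changes the metric on $u_k$, which puts $|\nabla u_k|^3$ in its last two terms. Your ordering also sidesteps the Cauchy--Schwarz step in the paper's third term, which as written would require $\nabla u_k\in L^6$.
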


\begin{proof}
Recall that 
\[
D_p(1) = c_\psi + \int_{\an(1)} \varphi(1,u) \vert \grad^g u\vert^3\, dv_g
\]
and likewise for $D_{p,k}(1)$. Thus we need to show that 
\[
\int_{\an(1)} \left\vert \varphi(1,u) \vert \grad^g u\vert^3 \sqrt{\det g} - \varphi(1,u_k) \vert \grad^{g_k} u_k\vert^3 \sqrt{\det g_k}\right\vert\, dx \to 0
\]
as $k\to \infty$. The integral on the left is at most 
\begin{align*}
&\int_{\an(1)} \left\vert \varphi(1,u) \vert \grad^g u\vert^3 \sqrt{\det g} - \varphi(1,u_k) \vert \grad^{g} u\vert^3 \sqrt{\det g}\right\vert\, dx \\
&\qquad + \int_{\an(1)} \left\vert \varphi(1,u_k) \vert \grad^g u\vert^3 \sqrt{\det g} - \varphi(1,u_k) \vert \grad^{g} u_k\vert^3 \sqrt{\det g}\right\vert\, dx\\
&\qquad + \int_{\an(1)} \left\vert \varphi(1,u_k) \vert \grad^g u_k\vert^3 \sqrt{\det g} - \varphi(1,u_k) \vert \grad^{g_k} u_k\vert^3 \sqrt{\det g}\right\vert\, dx\\
&\qquad + \int_{\an(1)} \left\vert \varphi(1,u_k) \vert \grad^{g_k} u_k\vert^3 \sqrt{\det g} - \varphi(1,u_k) \vert \grad^{g_k} u_k\vert^3 \sqrt{\det g_k}\right\vert\, dx.
\end{align*} 
The first integral goes to 0 since $\varphi(1,u_k)\to \varphi(1,u)$ in $C^0$ and $\vert \grad^g u\vert^3 \sqrt{\det g}$ is integrable. The second integral goes to 0 since 
\begin{align*} 
&\int_{\an(1)} \left\vert \varphi(1,u_k) \vert \grad^g u\vert^3 \sqrt{\det g} - \varphi(1,u_k) \vert \grad^{g} u_k\vert^3 \sqrt{\det g}\right\vert\, dx\\
&\qquad  \le C \int_{\an(1)} \vert  \vert \grad^g u\vert^3 - \vert \grad^g u_k\vert^3\vert\, dv_g
\end{align*} 
and $\grad u_k\to \grad u$ in $W^{1,3}$. 

 To see the third integral goes to zero, note that 
\begin{align*}
&\int_{\an(1)} \left\vert \varphi(1,u_k) \vert \grad^g u_k\vert^3 \sqrt{\det g} - \varphi(1,u_k) \vert \grad^{g_k} u_k\vert^3 \sqrt{\det g}\right\vert\, dx\\
&\qquad \le C \left[\int (\det g)^2\,d x\right]^{1/2} \left[\int \vert  \vert \grad^g u_k\vert^3 - \vert \grad^{g_k} u_k\vert^3\vert^2\, dx\right]^{1/2}.  
\end{align*}
The integral with $\det g$ is uniformly bounded. For the remaining integral, observe that 
\[
\vert (\grad^g u_k\vert^2)^{3/2} - (\vert \grad^{g_k}u_k\vert^2)^{3/2}\vert \le C \vert \vert\grad^g u_k\vert^2_g - \vert \grad^{g_k} u_k\vert_{g_k}^2\vert (\vert \grad^g u_k\vert_g + \vert \grad^{g_k}u_k\vert_{g_k})
\]
and that 
\begin{gather*}
\vert \vert\grad^g u_k\vert_g^2 - \vert \grad^{g_k} u_k\vert_{g_k}^2\vert \le \vert g^{-1}-g_k^{-1}\vert \vert \grad u_k\vert^2,\\
\vert \grad^g u_k\vert_{g} \le \vert g^{-1}\vert^{1/2} \vert \grad u_k\vert, \quad \vert \grad^{g_k} u_k\vert_{g_k} \le \vert g_k^{-1}\vert^{1/2} \vert \grad u_k\vert.
\end{gather*}
Thus we obtain 
\begin{align*}
\int \vert  \vert \grad^g u_k\vert^3 - \vert \grad^{g_k} u_k\vert^3\vert^2\, dx \le C \int \vert g^{-1} - g_k^{-1}\vert (\vert g^{-1}\vert^{1/2} + \vert g_k^{-1}\vert^{1/2}) \vert \grad u_k\vert^3\, dx.  
\end{align*} 
Since $u_k$ is bounded in $W^{1,q}$ for some $q > 3$, this will go to zero as $k\to \infty$ by H\"older's inequality provided $P$ is selected sufficiently large depending on $\eps$. Hence the third integral goes to 0. 

Finally consider the fourth integral. We have 
\begin{align*}
&\int_{\an(1)} \left\vert \varphi(1,u_k) \vert \grad^{g_k} u_k\vert^3 \sqrt{\det g} - \varphi(1,u_k) \vert \grad^{g_k} u_k\vert^3 \sqrt{\det g_k}\right\vert\, dx\\
&\qquad \le C \int \vert \grad^{g_k}u_k\vert^3 \vert \sqrt{\det g}-\sqrt{\det g_k}\vert\, dx \\
& \qquad \le C \int \vert g_k^{-1}\vert^{3/2} \vert \grad u_k\vert^3 \vert \sqrt{\det g}-\sqrt{\det g_k}\vert\, dx. 
\end{align*} 
Again, since $u_k$ is bounded in $W^{1,q}$ for some $q > 3$, this will go to zero by H\"older's inequality provided $P$ is sufficiently large depending on $\eps$.  The proposition now follows. 
\end{proof}

\bibliographystyle{plain}
\bibliography{bibliography}

\end{document}